\newcommand{\map}[1]{\xrightarrow{#1}}
\newcommand{\iso}{\cong}
\newcommand{\define}{\stackrel{\mathrm{def}}{=}}
\newcommand{\red}{\mathrm{red}}
\newcommand{\Q}{\mathbb Q}
\newcommand{\Z}{\mathbb Z}
\newcommand{\F}{\mathbb F}
\newcommand{\co}{\mathcal O}
\DeclareMathOperator{\Hom}{Hom}
\DeclareMathOperator{\End}{End}
\DeclareMathOperator{\Spec}{Spec}
\DeclareMathOperator{\Lie}{Lie}
\DeclareMathOperator{\Fil}{Fil}
\DeclareMathOperator{\ord}{ord}
\DeclareMathOperator{\GL}{GL}
\DeclareMathOperator{\inv}{inv}
\DeclareMathOperator{\GU}{GU}
\begin{document}

\title{Rapoport-Zink spaces of type $\mathrm{GU}(2,n-2)$}
\date{\today}

\author{Maria Fox}
\email{maria.fox@okstate.edu}
\address{Department of Mathematics, Oklahoma State University, Stillwater, OK 74078, USA}

\author{Benjamin Howard}
\email{howardbe@bc.edu}
\address{Department of Mathematics, Boston College, 140 Commonwealth Ave, Chestnut Hill, MA 02467, USA}

\author{Naoki Imai}
\email{naoki@ms.u-tokyo.ac.jp}
\address
{Graduate School of Mathematical Sciences, The University of Tokyo, 3-8-1 Komaba, Meguro-ku, Tokyo, 153-8914, Japan}

\thanks{
M.F. was supported by NSF MSPRF Grant
2103150. B.H. was supported in part by NSF grant DMS-2101636. N.I. was supported by JSPS KAKENHI Grant Number 22H00093. 
}

\classification{14G35, 11G18}
\keywords{Rapoport-Zink spaces, affine Deligne-Lusztig varieties, unitary Shimura varieties}

\begin{abstract}
We describe the structure of the supersingular Rapoport-Zink space associated to the group of unitary similitudes of signature $(2,n-2)$ for an unramified quadratic extension of  $p$-adic fields.
In earlier work, two of the authors described the irreducible components in the category of schemes-up-to-perfection.  
The goal of this work is to remove the qualifier ``up-to-perfection".
\end{abstract}

\maketitle

\setcounter{tocdepth}{1}
\tableofcontents

\theoremstyle{plain}
\newtheorem{theorem}{Theorem}[subsection]
\newtheorem{bigtheorem}{Theorem}
\newtheorem{proposition}[theorem]{Proposition}
\newtheorem{lemma}[theorem]{Lemma}
\newtheorem{corollary}[theorem]{Corollary}
\newtheorem{conjecture}[theorem]{Conjecture}

\theoremstyle{definition}
\newtheorem{definition}[theorem]{Definition}
\newtheorem{hypothesis}[theorem]{Hypothesis}

\theoremstyle{remark}
\newtheorem{remark}[theorem]{Remark}
\newtheorem{example}[theorem]{Example}
\newtheorem{question}[theorem]{Question}

\renewcommand{\thebigtheorem}{\Alph{bigtheorem}}

\numberwithin{equation}{subsection}


\section{Introduction}


Rapoport-Zink formal schemes \cite{RZbook} are  defined as moduli spaces of $p$-divisible groups with additional structure.  While general existence theorems are known for these spaces, it is a very difficult problem to determine their structures  as formal schemes, or even  the structure of their underlying reduced schemes, in any  explicit way.  
The history of this problem is long,  and has its origins in Drinfeld's work (before Rapoport and Zink) on the $p$-adic uniformization of quaternionic Shimura curves.  
The reader can find a thorough guide to the older literature in the introduction to Vollaard's work \cite{Vollaard} on Rapoport-Zink spaces of type $\mathrm{GU}(1,n-1)$.

This work of Vollaard, and the subsequent work of Vollaard-Wedhorn \cite{VollaardWedhorn}, introduced significant new ideas into the subject,  which were extended further in \cite{Fox}, \cite{HP14}, \cite{HP17}, \cite{RTW},  and \cite{Wang}.
In all of these works, the main results assert that  the irreducible components of (the reduced scheme underlying) a particular Rapoport-Zink space are isomorphic to  Deligne-Lusztig varieties, a term which we always understand in the the generalized sense of \S 4.4 of  \cite{VollaardWedhorn}.
G\"{o}rtz-He \cite{GH} and G\"{o}rtz-He-Nie \cite{GHN} gave a classification of those Rapoport-Zink spaces for which one should expect the irreducible components to have this form, and called such Rapoport-Zink spaces \emph{fully Hodge-Newton decomposable}. 

Xiao-Zhu \cite{XZ} proved quite general results on the structure of Rapoport-Zink spaces (and more general affine Deligne-Lustztig varieties).  The results of Xiao-Zhu provide a parametrization of the irreducible components,
but do not  provide a  description of their scheme-theoretic structure.
 The Xiao-Zhu  parametrization of components has since been generalized by other  authors; see   \cite{HV}, \cite{HZZ}, \cite{Nie}, and \cite{ZZ}.

Among the simplest examples of Rapoport-Zink spaces that are not fully Hodge-Newton decomposable are those of type  $\mathrm{GU}(2,n-2)$.  Because of the results of  G\"{o}rtz-He-Nie mentioned above, there is no expectation that the irreducible components in this setting are Deligne-Lusztig varieties.
Despite this,  earlier work of two of the authors  \cite{FI}  showed that an open dense subset of each irreducible component can be fibered  over a Deligne-Lusztig variety, with the fibers made explicit.

One sense in which the descriptions of irreducible components  in \cite{FI} is incomplete is that everything is understood  in the  category of schemes-up-to-perfection.  
Loosely speaking, this means that for each irreducible component, a scheme (fibered over a Deligne-Lusztig variety) is exhibited with the property that it has the same functor of points \emph{when restricted to perfect algebras in characteristic $p$}.  For example, when viewed as an object of this category, each irreducible component is   indistinguishable  from its Frobenius twists.  
This ambiguity in the scheme-theoretic structure of the components arises because \cite{FI} adheres closely to the framework of \cite{XZ}, in which Rapoport-Zink spaces are replaced by their corresponding affine Deligne-Lustzig varieties, which are thought of not as moduli spaces of  $p$-divisible groups, but as closed subsets  of  the  Witt vector affine Grassmannians of \cite{Zhu} and \cite{BhattScholze}.
  These  Witt vector affine Grassmannians
are only defined as objects in the category of (ind-)schemes-up-to-perfection.

The primary purpose of this paper is to revisit the results of \cite{FI}, in order to pin down the precise scheme-theoretic structure,  not just up to perfection, of the irreducible components of the  $\mathrm{GU}(2,n-2)$ Rapoport-Zink space.   

In practice, this requires making  systematic use of  the moduli interpretation  of the Rapoport-Zink space (as opposed to the interpretation as a closed subset of a Witt vector affine Grassmannian), in order to exploit the Grothendieck-Messing deformation theory of the universal $p$-divisible group that lives over it.


\subsection{Statement of the results}


Throughout this paper we denote by $\breve{\Q}_p$ the completion of the maximal unramified extension of $\Q_p$, and by 
$\breve{\Z}_p$ its ring of integers.  
Let $\sigma : \breve{\Q}_p \to \breve{\Q}_p$ be the Frobenius, inducing the $p$-power automorphism of the  residue field 
 \[
 \breve{\F}_p= \breve{\Z}_p / p\breve{\Z}_p.
 \]
Note that $\breve{\F}_p$ is just an algebraic closure of $\F_p$.

Fix an unramified quadratic field extension $E$ of $\Q_p$.  
We are interested in the Rapoport-Zink formal scheme 
\[
\mathrm{RZ} \to \mathrm{Spf}( \breve{\F}_p) 
\]
parametrizing $p$-divisible groups $X$ of dimension $n\ge 2$ over $\F_p$-algebras, endowed with principal polarizations, an action of $\co_E$ satisfying a signature $(2,n-2)$ condition, and a quasi-isogeny $\varrho_X : X \dashrightarrow \mathbb{X}$ to a fixed framing object.  See \S \ref{ss:RZbasics} for the precise definitions.

For our framing object $\mathbb{X}$ we make a nonstandard choice: an $n$-dimensional $p$-divisible group, again with an $\co_E$-action and principal polarization, but satisfying a signature $(0,n)$-condition.   We will show that $\mathbb{X}$ is unique up to isomorphism, not just isogeny, and has the form
\[
\mathbb{X} \iso \Lambda \otimes_{\co_E} \overline{\mathbb{Y}}
\]
for  $\overline{\mathbb{Y}}$ a supersingular $p$-divisible group of height $2$ and dimension $1$ endowed with an action of $\co_E$ of signature $(0,1)$, and $\Lambda$  a  rank $n$ self-dual hermitian $\co_E$-lattice.    
By the comments preceding  Definition \ref{def:fullRZ}, our $\mathbb{X}$ is  also quasi-isogenous to a $p$-divisible group of signature $(2,n-2$), so this unusual choice of framing object yields the usual Rapoport-Zink space for $\mathrm{GU}(2,n-2)$.

Let $G=\mathrm{GU}(\Lambda)$ be the group of unitary similitudes of $\Lambda$, a reductive group over $\Z_p$.  Its group of $\Q_p$-points acts on the framing object  $\Lambda \otimes \overline{\mathbb{Y}}$ by quasi-isogenies, and hence also acts on $\mathrm{RZ}$.

Denote by  $\mathrm{RZ}_\Lambda \subset  \mathrm{RZ}$ the projective closed  subscheme parametrizing commutative diagrams 
\[
\xymatrix{
&{ X }\ar@{-->}[d]^{\varrho_X}  \ar[dr] \\
{ p\Lambda \otimes_{\co_E} \overline{\mathbb{Y}}}  \ar[r] \ar[ur] & { \Lambda \otimes_{\co_E} \overline{\mathbb{Y}}}   \ar[r] & { p^{-1}\Lambda \otimes_{\co_E}  \overline{\mathbb{Y}}} ,
}
\]
in which all solid arrows are isogenies, and the horizontal arrows are induced by the inclusions
$p\Lambda \subset \Lambda \subset p^{-1}\Lambda$.
It will turn out (see the proof of Corollary \ref{cor:component cover}) that
\[
\mathrm{RZ}^\red = \bigcup_{\gamma \in G(\Q_p) / G(\Z_p) } \gamma \cdot \mathrm{RZ}_\Lambda^\red,
\]
where the superscript $\red$ indicates underlying reduced scheme.
(We suspect that $\mathrm{RZ}_\Lambda$ is already reduced, and that this might be proved by arguing as in Corollary 3.2.3 of \cite{LZ}, but we have not checked this.) 
Hence the irreducible components of the left hand side are precisely the $G(\Q_p)$-translates of irreducible components of $\mathrm{RZ}_\Lambda^\red$.

To describe these, we decompose
\[
\mathrm{RZ}_\Lambda^\red = \bigsqcup_{1\le k \le \lfloor n/2 \rfloor} \mathrm{RZ}_\Lambda^{k,\red} 
\]
into locally closed subschemes.  The precise definition  of the schemes on the right hand side appears in \S \ref{ss:hyperspecial strata}, but 
loosely speaking, as $k$ increases the  points of $\mathrm{RZ}_\Lambda^{k,\red}$ get farther from the framing object.

The following result is stated in the text as Theorem \ref{thm:main components}.  
 The Deligne-Lusztig variety  is that of Definition  \ref{def:non-minuscule DL}, and the vector bundle $\mathscr{V}$ on it is constructed in \S \ref{ss:special bundle}.

\begin{bigtheorem}\label{bigthmA}
If $k < n/2$ then $\mathrm{RZ}_\Lambda^{k,\red}$ is a smooth and irreducible $\breve{\F}_p$-scheme of dimension $n-2$, and its  closure 
 \[
 \overline{\mathrm{RZ}}_\Lambda^{k,\red} \subset  \mathrm{RZ}^\red
 \]
  is an irreducible component with stabilizer  $G(\Z_p)\subset G(\Q_p)$.
Moreover, there is a smooth morphism 
\[
\mathrm{RZ}_\Lambda^{k,\red} \to \mathrm{DL}_\Lambda^k
\]
of relative dimension $k-1$ to a smooth and projective Deligne-Lusztig variety with the following property: over  $ \mathrm{DL}_\Lambda^k$ there is a vector bundle $\mathscr{V}$ of rank $2k-1$, endowed with a rank $k$ local direct summand $\mathscr{V}^{(k)} \subset \mathscr{V}$ and a morphism
\[
\beta : \mathscr{V} \otimes \sigma^* \mathscr{V} \to \co_{ \mathrm{DL}_\Lambda^k } ,
\]
such that $\mathrm{RZ}_\Lambda^{k,\red}$ is identified with the moduli space parametrizing 
complementary summands 
\[
\mathscr{V} = \mathscr{F} \oplus \mathscr{V}^{(k)}
\]
that are  totally isotropic, in the sense that $\beta(\mathscr{F} \otimes \sigma^*\mathscr{F})=0$.  Here $\sigma^*$ denotes pullback of coherent sheaves with the respect to the $p$-power Frobenius on the structure sheaf of $\mathrm{DL}_\Lambda^k$.
\end{bigtheorem}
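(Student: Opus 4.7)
The plan is to use the \emph{moduli} interpretation of $\mathrm{RZ}_\Lambda^{k,\red}$ in order to apply Grothendieck-Messing deformation theory to the universal $p$-divisible group, thereby upgrading the up-to-perfection description of \cite{FI} to a scheme-theoretic one. Over a test $\breve{\F}_p$-algebra $R$, a point $(X,\varrho_X)$ of $\mathrm{RZ}_\Lambda^{k,\red}(R)$ gives, via the crystalline Dieudonn\'e functor, a finite locally free $R$-module $M$ sandwiched between the two fixed lattices associated to $p\Lambda\otimes\overline{\mathbb{Y}}$ and $p^{-1}\Lambda\otimes\overline{\mathbb{Y}}$, together with a Hodge filtration $\mathrm{Fil}\subset M$ whose rank is dictated by the signature $(2,n-2)$ condition and the $\co_E$-action.

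First I would extract the Deligne--Lusztig target $\mathrm{DL}_\Lambda^k$, the rank $2k-1$ bundle $\mathscr{V}$, the rank $k$ local direct summand $\mathscr{V}^{(k)}$, and the pairing $\beta$ directly from the framing data: all of these should arise as subquotients of the constant bundle $\Lambda\otimes\co_{\mathrm{DL}_\Lambda^k}$, with $\beta$ inherited from the hermitian form on $\Lambda$. Next I would define the morphism $\mathrm{RZ}_\Lambda^{k,\red}\to\mathrm{DL}_\Lambda^k$ by assigning to $(X,\varrho_X)$ the canonical ``stable'' piece of its Dieudonn\'e lattice --- the part measuring the failure of the lattice to be fixed by Frobenius --- which up to perfection recovers the construction of \cite{FI}. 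The key technical point here is that both $\mathscr{V}^{(k)}$ and this morphism descend to the honest (non-perfected) Deligne-Lusztig variety, which requires careful control over the Frobenius twists appearing in the definitions.

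Once the map is in place, the identification of its fibers with the moduli of totally isotropic complements $\mathscr{V}=\mathscr{F}\oplus\mathscr{V}^{(k)}$ is forced by Grothendieck-Messing: having fixed the Dieudonn\'e lattice, the only remaining datum in $(X,\varrho_X)$ is the Hodge filtration, and the polarization plus signature conditions translate precisely into the requirement that this filtration be such a complement. The moduli of complementary summands sits inside the affine bundle $\mathrm{Hom}(\mathscr{V}^{(k)},\mathscr{V}/\mathscr{V}^{(k)})$; the isotropy condition, controlled by the rank of $\beta$ restricted to $\mathscr{V}^{(k)}\otimes\sigma^*\mathscr{V}^{(k)}$, cuts out a smooth subscheme of relative dimension $k-1$. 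Combined with the smoothness and irreducibility of $\mathrm{DL}_\Lambda^k$ (a separate input coming from the general theory of Deligne-Lusztig varieties in the sense of \cite{VollaardWedhorn}), this yields smoothness, irreducibility, and dimension $n-2$ for $\mathrm{RZ}_\Lambda^{k,\red}$. The stabilizer assertion and the claim that $\overline{\mathrm{RZ}}_\Lambda^{k,\red}$ is an irreducible component of $\mathrm{RZ}^\red$ then follow from the covering $\mathrm{RZ}^\red=\bigcup_{\gamma}\gamma\cdot\mathrm{RZ}_\Lambda^\red$ noted in the introduction, combined with a lattice-theoretic check that only $\gamma\in G(\Z_p)$ preserves the stratum $\mathrm{RZ}_\Lambda^{k,\red}$.

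The main obstacle is the systematic bookkeeping needed to upgrade \cite{FI}'s perfect-algebra-valued description to a scheme-theoretic one: one must pin down the correct Frobenius twists on $\mathscr{V}$ and $\mathscr{V}^{(k)}$, since in a perfect context these twists are invisible --- exactly the source of the ambiguity in \cite{FI} --- but over a non-perfect base they must be tracked carefully. The crux is therefore the Grothendieck-Messing verification that the Hodge filtration of the universal $p$-divisible group yields an isotropic complement inside the \emph{correct} untwisted bundle $\mathscr{V}$, rather than some Frobenius twist of it, and that the resulting morphism to the moduli of isotropic complements is a scheme-theoretic isomorphism rather than merely a bijection on perfect-algebra points.
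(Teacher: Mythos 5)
Your high-level plan matches the paper's: work with the moduli interpretation, construct a morphism to a Deligne-Lusztig variety, use Grothendieck-Messing to pin down the scheme structure, identify the fibers as a space of isotropic complements, and then invoke the Xiao-Zhu parametrization plus the covering $\mathrm{RZ}^\red=\bigcup_\gamma\gamma\cdot\mathrm{RZ}_\Lambda^\red$ for the component and stabilizer assertions. However, there are two genuine gaps in the middle of your argument where the key technical content of the paper actually lives.

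First, you propose that $\mathscr{V}$, $\mathscr{V}^{(k)}$, and $\beta$ ``arise as subquotients of the constant bundle $\Lambda\otimes\co_{\mathrm{DL}_\Lambda^k}$, with $\beta$ inherited from the hermitian form on $\Lambda$.'' This does not work, and the paper explicitly disclaims any such intrinsic construction at the end of \S\ref{ss:special bundle}, noting that it is not obvious how to express $\mathscr{V}$ in a way intrinsic to $\mathrm{DL}_\Lambda^k$. The actual construction first builds a richer moduli space $Y_\Lambda^k$ carrying a universal diagram of $p$-divisible groups $H\to G\to H^\vee$ (see \eqref{Y tilde functor}), proves $Y_\Lambda^{k,\red}\iso\mathrm{DL}_\Lambda^k$ (Theorem \ref{thm:YtoDL}), and then defines $\mathscr{V}$ as a subquotient of the Grothendieck-Messing crystal bundles $\mathscr{D}(H)$ and $\mathscr{D}(H^\vee)$, specifically as a cokernel $\mathrm{coker}(\Fil^0\mathscr{M}_0\to\Fil^0\mathscr{N}_0)$ as in \eqref{VWdef}. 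Likewise, the pairing $\beta$ is not induced by the hermitian form on $\Lambda$ in any simple way: Lemmas \ref{lem:goof map} and \ref{lem:beta2} show it requires the Frobenius and Verschiebung operators on crystals and the quasi-polarization $\varpi:H^\vee\to H$. The claim that $\Lambda\otimes\co$ alone gives $\mathscr{V}$ and $\beta$ is precisely the kind of statement that is true on $\breve{\F}_p$-points (where crystals trivialize) but fails scheme-theoretically, which is the whole point of the paper.

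Second, your description of the morphism $\mathrm{RZ}_\Lambda^{k,\red}\to\mathrm{DL}_\Lambda^k$ via ``the canonical stable piece of the Dieudonn\'e lattice'' is only a pointwise recipe; on families it is not clear that such a piece glues to a morphism of schemes. The paper solves this by introducing the auxiliary moduli space $\mathbf{RZ}_\Lambda^k$ (recording, in addition to $X$, the isogeny chain through $H$, $G$, $H^\vee$), together with projections $\pi_0$ to $\mathrm{RZ}_\Lambda^k$ and $\pi_1$ to $Y_\Lambda^k$. The hard work is then showing that $\pi_0$ induces an isomorphism on reduced schemes (Proposition \ref{prop:pi_0 isomorphism}): this uses Grothendieck-Messing in a nontrivial way via Lemma \ref{lem:easy deformation}, not the ``automatic'' argument you gesture at. Relatedly, your assertion that the isotropic locus inside $\underline{\Hom}(\mathscr{V}^{(k)},\mathscr{V}/\mathscr{V}^{(k)})$ is smooth of relative dimension $k-1$ needs the structure result on $\beta$ (Proposition \ref{prop:beta isotropy}: the radical of $\beta$ is exactly $\mathscr{V}^{(1)}$), and the identification of $\mathrm{RZ}_\Lambda^{k,\red}$ with the isotropic Grassmannian requires showing how to recover the Hodge filtration of $X$ from a direct summand of $\mathscr{V}$ (equation \eqref{magic hodge} and Lemma \ref{lem:miracle filtration}), which is where the scheme-theoretic content actually resides. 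You have correctly identified what must be proven and roughly why it should be true, but the constructions you propose to carry it out are either underspecified or incorrect.
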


 Although the technical details obscure it, 
the basic idea for constructing a map from $\mathrm{RZ}_\Lambda^{k,\red}$ to a Deligne-Lusztig variety is quite simple.  
An $\breve{\F}_p$-valued point of $\mathrm{RZ}_\Lambda^{k,\red}$ corresponds to a quasi-isogeny of $p$-divisible groups $X \dashrightarrow \mathbb{X}$.  
This realizes the covariant Dieudonn\'e modules of these $p$-divisible groups as lattices in  a common $\breve{\Q}_p$-vector space, and the intersection 
 $
 D(H) = D(X) \cap D( \mathbb{X})
 $
 is the Dieudonn\'e module of a  $p$-divisible group $H$ endowed with an isogeny $H \to  \mathbb{X}$.  
 This latter isogeny is quite small, in the sense that 
 \[
pD( \mathbb{X} ) \subset   D(H)  \subset  D( \mathbb{X}), 
 \]
and so  is determined by  the subspace
 \[
   \frac{ D(H)  }{ p D( \mathbb{X} )  }  \subset 
   \frac{  D( \mathbb{X}) }{  p D( \mathbb{X}) } \iso \Lambda\otimes_{\Z_p} \breve{\F}_p.
 \]
 The isomorphism here comes from our particular choice of framing object, as  a choice of isomorphism $D( \overline{\mathbb{Y}} ) \iso \co_E\otimes_{\Z_p} \breve{\Z}_p$ identifies
 \[
 D( \mathbb{X}) = D( \Lambda \otimes_{\co_E} \overline{\mathbb{Y}} ) 
 = \Lambda\otimes_{\co_E} D(  \overline{\mathbb{Y}} )  \iso \Lambda\otimes_{\Z_p} \breve{\Z}_p.
 \]
For the rough purposes of this introduction,  one can think of the Deligne-Lusztig variety  as parametrizing all subspaces   that arise from this construction.

 If $n$ is odd Theorem \ref{bigthmA} completes our description of $\mathrm{RZ}^\red$, as 
 \[
 \mathrm{RZ}^\red= 
  \bigcup_{ \substack{ 1\le k < n/2  \\  \gamma \in G(\Q_p)/G(\Z_p)} }   \gamma \cdot \overline{\mathrm{RZ}}_\Lambda^{k,\red} 
 \]
exhibits the left hand side as the union of its irreducible components.
Ideally one would like to have a description not just of $\mathrm{RZ}_\Lambda^{k,\red}$, but of its closure.  This seems quite difficult for  $k>1$.
When  $k=1$, Theorem \ref{bigthmA}  implies that 
\[ \mathrm{RZ}_\Lambda^{1,\red}\iso \mathrm{DL}_\Lambda^1\]  is   projective, so no closure is needed.

 When $n$ is even we must also examine $\mathrm{RZ}_\Lambda^{n/2,\red}$.     For every intermediate lattice 
 $
 p\Lambda \subsetneq \Lambda' \subsetneq \Lambda
 $
 such that $\Lambda'/p\Lambda \subset \Lambda/p\Lambda$ is maximal isotropic, we define  
 \[
 \mathrm{RZ}^\heartsuit_{\Lambda'} \subset \mathrm{RZ}_\Lambda
 \]
 as the projective closed subscheme parametrizing commutative diagrams
 \[
\xymatrix{
&{ X }\ar@{-->}[d]^{\varrho_X}  \ar[dr] \\
{ \Lambda' \otimes_{\co_E}\overline{\mathbb{Y}}}  \ar[r] \ar[ur] & { \Lambda \otimes_{\co_E}\overline{\mathbb{Y}}}   \ar[r] & { p^{-1}\Lambda' \otimes_{\co_E}\overline{\mathbb{Y}}} ,
}
\]
in which all solid arrows are isogenies, and the horizontal arrows are induced by the inclusions
$\Lambda' \subset \Lambda \subset p^{-1}\Lambda'$.

The following result is stated in the text as Theorem \ref{thm:aux components}.  
 The Deligne-Lusztig variety in the theorem is that of Definition  \ref{def:heartDL}.

\begin{bigtheorem}\label{bigthmB}
If $n$ is even then
\[
\mathrm{RZ}_\Lambda^{n/2,\red} \subset 
 \bigcup_{ p\Lambda  \subsetneq \Lambda'  \subsetneq \Lambda}  \mathrm{RZ}_{\Lambda'}^{\heartsuit ,\red},
\]
where the union is over   intermediate lattices  for which $\Lambda'/p\Lambda \subset \Lambda/p\Lambda$ is maximal isotropic.  
All of the $ \mathrm{RZ}_{\Lambda'}^{\heartsuit ,\red}$  appearing on the right hand side lie in the same $G(\Q_p)$-orbit,  each is 
an irreducible component of $\mathrm{RZ}^\red$ with stabilizer a $G(\Q_p)$-conjugate of $G(\Z_p)$, and each is isomorphic to a smooth projective Deligne-Lusztig variety of dimension $n-2$.
\end{bigtheorem}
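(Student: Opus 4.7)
The plan is to reduce to Dieudonn\'e-theoretic computations in the style of Theorem A, but with the self-dual lattice $\Lambda$ replaced by the intermediate lattice $\Lambda'$. For the containment, fix $X \in \mathrm{RZ}_\Lambda^{n/2,\red}(\breve{\F}_p)$, and let $D(X)$ be its Dieudonn\'e module, which satisfies $pD(\mathbb{X}) \subset D(X) \subset D(\mathbb{X})$ with $D(\mathbb{X}) \iso \Lambda \otimes_{\Z_p} \breve{\Z}_p$. The condition that $X$ lies in the top stratum $k=n/2$ forces the image $\overline{D(X)} \subset \Lambda \otimes \breve{\F}_p$ to contain an $\F_{p^2}$-rational maximal isotropic subspace $W$ of dimension $n/2$; taking $\Lambda' \subset \Lambda$ to be the preimage of $W$ under the reduction map yields an intermediate lattice with $p\Lambda \subsetneq \Lambda' \subsetneq \Lambda$ and $\Lambda'/p\Lambda$ maximal isotropic, and the inclusions $\Lambda' \otimes \breve{\Z}_p \subset D(X) \subset p^{-1}\Lambda' \otimes \breve{\Z}_p$ realize $X$ as a point of $\mathrm{RZ}_{\Lambda'}^{\heartsuit,\red}$.

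For the single $G(\Q_p)$-orbit, the reduction $\Lambda/p\Lambda$ is a non-degenerate hermitian space of even dimension $n$ over $\F_{p^2}$, so by Witt's theorem $\mathrm{GU}(\Lambda)(\Q_p)$ acts transitively on its maximal isotropic subspaces, and this lifts to a transitive action on the set of intermediate lattices $\Lambda'$ appearing in the statement. The $G(\Q_p)$-action on $\mathrm{RZ}$ visibly carries $\mathrm{RZ}_{\Lambda'}^{\heartsuit,\red}$ to $\mathrm{RZ}_{g\Lambda'}^{\heartsuit,\red}$, which establishes the orbit claim. The stabilizer of $\Lambda'$ (equivalently, of $\mathrm{RZ}_{\Lambda'}^{\heartsuit,\red}$) in $G(\Q_p)$ is a hyperspecial parahoric: one can exhibit an explicit $g \in G(\Q_p)$ of similitude factor $p$ mapping $\Lambda$ to a $G(\Z_p)$-translate of $\Lambda'$, so that conjugation by $g$ identifies $G(\Z_p)$ with the stabilizer of $\Lambda'$.

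For the identification with a Deligne-Lusztig variety, apply a Dieudonn\'e-theoretic analysis to $\mathrm{RZ}_{\Lambda'}^{\heartsuit,\red}$ parallel to the one behind Theorem A: an $\breve{\F}_p$-point is equivalent to a lattice $D(X)$ between $\Lambda' \otimes \breve{\Z}_p$ and $p^{-1}\Lambda' \otimes \breve{\Z}_p$ satisfying the signature, isotropy, and Frobenius-stability conditions, and after reduction modulo $\Lambda' \otimes \breve{\Z}_p$ becomes a subspace of the $n$-dimensional $\breve{\F}_p$-vector space $p^{-1}\Lambda'/\Lambda'$ cut out by precisely the conditions of Definition \ref{def:heartDL}. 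Projectivity is immediate from the closed immersion $\mathrm{RZ}_{\Lambda'}^{\heartsuit} \hookrightarrow \mathrm{RZ}_\Lambda$, while smoothness and dimension $n-2$ are inherited from the Deligne-Lusztig description. Since $\mathrm{RZ}_{\Lambda'}^{\heartsuit,\red}$ is then an irreducible closed subscheme of $\mathrm{RZ}^\red$ of the same dimension $n-2$ as every irreducible component of $\mathrm{RZ}^\red$ (by Theorem A), it is itself a full irreducible component.

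The principal obstacle will be the first step: showing that the stratum condition defining $\mathrm{RZ}_\Lambda^{n/2,\red}$ is equivalent to the existence of a maximal isotropic intermediate lattice $\Lambda'$, and in particular verifying that both inclusions $p\Lambda \subsetneq \Lambda' \subsetneq \Lambda$ are genuinely strict and that $\Lambda'/p\Lambda$ is isotropic of the maximal dimension $n/2$. I expect this to proceed by showing that the Deligne-Lusztig fibration of Theorem A degenerates precisely in the top stratum $k=n/2$, when the fiber direction collapses onto a maximal isotropic subspace of the ambient hermitian space $\Lambda/p\Lambda$.
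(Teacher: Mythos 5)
Your outline correctly identifies the shape of the argument, but it leaves the two hardest steps unresolved, and these are exactly the steps where the paper's real work lives.

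For the containment $\mathrm{RZ}_\Lambda^{n/2,\red}\subset\bigcup\mathrm{RZ}_{\Lambda'}^{\heartsuit,\red}$, you correctly flag this as the main obstacle but the route you suggest (``the Deligne--Lusztig fibration degenerates in the top stratum'') is a guess, not an argument, and you misstate the relevant lattice containments: for $X\in\mathrm{RZ}_\Lambda(\breve{\F}_p)$ one has $p\breve\Lambda\subset D(X)\subset p^{-1}\breve\Lambda$, not $p\breve\Lambda\subset D(X)\subset \breve\Lambda$ (the latter holds for $D(H)=D(X)\cap\breve\Lambda$, which you are implicitly conflating with $D(X)$). The mechanism that actually produces an $\F_{p^2}$-rational lattice is this: setting $N_0=D(X)_0+\breve\Lambda_0$, the chain of Proposition \ref{prop:RZ tilde points} gives $pN_0\stackrel{n-2k}{\subset}N_0^*$, and precisely when $k=n/2$ this becomes the \emph{equality} $pN_0=N_0^*$. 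From $\Phi^{-2}N_0 = N_0^{**} = (pN_0)^* = N_0$ one sees that $p(N_0\oplus\Phi N_0)$ is $\Phi$-stable, so its $\Phi$-fixed points give the $\co_E$-lattice $\Lambda'$, and scalar self-duality follows from $N_0\oplus\Phi N_0^*$ being hermitian self-dual. This rationality computation (Proposition \ref{prop:heart decomp}) is the content your proposal is missing, and it cannot be bypassed by pointing at ``an $\F_{p^2}$-rational maximal isotropic subspace'' whose rationality you have not justified.

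For the identification $\mathrm{RZ}_{\Lambda'}^{\heartsuit,\red}\iso\mathrm{DL}_{\Lambda'}^\heartsuit$, your Dieudonn\'e-theoretic computation only matches $\breve{\F}_p$-points, and then you assert that smoothness and dimension are ``inherited from the Deligne--Lusztig description'' --- but until you have a \emph{scheme} isomorphism there is nothing to inherit, and a bijection on $\breve{\F}_p$-points only yields an identification up to perfection, which is exactly the deficiency this paper was written to remove. The scheme-level isomorphism in the paper (Theorem \ref{thm:heart comparison}) requires: (a) constructing a flag of subbundles of $\mathscr{D}(p^{-1}\Lambda'\otimes\overline{\mathbb{Y}})_0$ via Grothendieck--Messing crystals and checking local-freeness via constancy of fiber rank (Lemma \ref{lem:bundle criterion}); and (b) a deformation-theoretic argument showing the resulting map is formally unramified, using the fact that $\Fil^0\mathscr{D}(X)$ can be recovered from the flag (the analogue of Lemma \ref{lem:miracle filtration}), and then Lemma \ref{lem:general iso}. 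None of this is present in your sketch. Finally, for the ``irreducible component'' claim, you invoke ``Theorem A'' to conclude equidimensionality, but Theorem A only governs the $k<n/2$ pieces; the paper instead identifies $\mathrm{RZ}_{\Lambda'}^{\heartsuit,\red}$ directly with a locally closed set $\mathrm{RZ}_{(n/2,h)}$ appearing in the Xiao--Zhu parametrization (Theorem \ref{thm:XZexplicit}), which makes it a component by construction and simultaneously yields the stabilizer statement without any dimension count.
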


When $n$ is even, if we fix one $\Lambda'\subset \Lambda$ as above then
\[
 \mathrm{RZ}^\red= 
\Big(   \bigcup_{ \substack{ 1\le k < n/2  \\  \gamma \in G(\Q_p)/G(\Z_p)} }   \gamma \cdot \overline{\mathrm{RZ}}_\Lambda^{k,\red}  \Big)
\cup 
\Big(  \bigcup_{  \gamma \in G(\Q_p)/ hG(\Z_p)h^{-1}   }   \gamma \cdot \mathrm{RZ}_{\Lambda'}^{\heartsuit,\red}  \Big)
 \]
 exhibits the left hand side as the union of its irreducible components. 
 Here $h\in G(\Q_p)$ is any element satisfying $h \Lambda = p^{-1} \Lambda'$, as in Theorem \ref{thm:aux components} and its proof.

Regardless of whether $n$ is even or odd, the above results imply that $\mathrm{RZ}^\red$ is equidimensional of dimension $n-2$.
This  is already known by the work of Xiao-Zhu \cite{XZ}.


\subsection{General notation}


Throughout the paper, $E$ is an unramified quadratic extension of $\Q_p$.
Denote by 
\[
i_0,i_1 : \co_E \to \breve{\Z}_p
\]
 the two embeddings.
The nontrivial automorphism of $E$ is denoted $x\mapsto \overline{x}$, so that $ i_0 ( \overline{x} ) = \sigma ( i_1(x))$ and similarly with the indices $0$ and $1$ reversed.
Abbreviate
\[
\breve{E} = E \otimes_{\Q_p}  \breve{\Q }_p,
 \] 
 and $\co_{\breve{E}} = \co_E \otimes_{\Z_p} \breve{\Z}_p$.
Label the orthogonal idempotents 
 \begin{equation}\label{orthopotents}
 e_0,e_1 \in \co_{\breve{E}} \iso \breve{\Z}_p \times \breve{\Z}_p
 \end{equation}
 in such a way that for any $\breve{\Z}_p$-module $M$ with a commuting action of $\co_E$, the actions of $\co_E$ on the direct summands
 \begin{equation}\label{general decomp}
 M_0 = e_0 M \quad \mbox{and}\quad M_1=e_1M 
 \end{equation}
 are through  $i_0$ and $i_1$, respectively.

A hermitian form on an $E$-vector space or  $\co_E$-module is always $E$-linear in the first variable and conjugate-linear in the second variable.

If $\mathscr{F}$ is a coherent sheaf on a scheme $X$, the notation $\mathscr{F}_x$ always means the fiber (not the stalk) at a point $x\in X$.
If $X$ is any scheme (or formal scheme) we denote by $X^\red$ the underlying reduced scheme.

If $M$ is a $\Z_p$-module, we  usually abbreviate $M[1/p] = M\otimes_{\Z_p} \Q_p$ and 
\[
\breve{M}=M\otimes_{\Z_p}\breve{\Z}_p.
\]

Suppose $X$ is a $p$-divisible group equipped with an action $\co_E \to \End(X)$.
We  endow the dual $p$-divisible group $X^\vee$ with the \emph{conjugate} of the naive $\co_E$-action; that is to say, the action $\co_E \to \End(X^\vee)$ sends $x\in \co_E$ to the endomorphism 
\begin{equation}\label{conjugate convention}
\overline{x}^\vee : X^\vee \to X^\vee
\end{equation}
 dual to $\overline{x} : X \to X$.  When we speak of an $\co_E$-linear polarization  $\lambda_X : X \to X^\vee$, we always understand $X^\vee$ to have this action of $\co_E$.

If $\phi : X\to Y$ is an isogeny of $p$-divisible groups over a field $k$, the kernel $\ker(\phi)$ is a finite group scheme over $k$ of order $p^{\mathrm{ht}(\phi)}$ for a positive integer $\mathrm{ht}(\phi)$ called the \emph{height} of $\phi$.
More generally, if $\phi : X\to Y$ is an isogeny of $p$-divisible groups over a scheme $S$, we denote by $\mathrm{ht}(\phi)$ the locally constant function on $S$ whose value at a point $s\in S$ is the height of the isogeny $\phi_s:X_s \to Y_s$
of $p$-divisible groups over the residue field $k(s)$.

 
 \subsection{Acknowledgements}
 

 The authors thank the anonymous referee for helpful comments and corrections.


\section{The Rapoport-Zink space}
\label{s:partialRZ}


In this section we define our main object of study, a  Rapoport-Zink space $\mathrm{RZ}$ of type $\mathrm{GU}(2,n-2)$, as a formal $\breve{\F}_p$-scheme  parametrizing $p$-divisible groups with additional structure.
This additional structure includes a quasi-isogeny to a fixed framing object, which determines distinguished closed subscheme $\mathrm{RZ}_\Lambda \subset \mathrm{RZ}$.

We describe the $\breve{\F}_p$-valued points of this subscheme in terms of lattices in a free $\breve{\Z}_p$-module, and then
express $\mathrm{RZ}_\Lambda$ as a union of locally closed subschemes.  
The   examination of these locally closed subschemes will occupy the rest of the paper.

 
 \subsection{Framing objects}
 \label{ss:frame}
 

For any $p$-divisible group $X$ over $\breve{\F}_p$, let $D(X)$ be the covariant Dieudonn\'e module.
It is a $\breve{\Z}_p$-module endowed with semi-linear operators  $F$ and $V$ satisfying $FV=VF=p$.  Under the covariant conventions, 
\begin{equation}\label{dieudonne-lie}
\Lie(X) \iso D(X) / V  D(X).
\end{equation}
 If $X$ has an action of $\co_E$, the induced action of $\co_E\otimes_{\Z_p} \breve{\Z}_p$ on its Dieudonn\'e module $D(X)$ determines, by \eqref{general decomp}, a decomposition 
\[
D(X) = D(X)_0 \oplus D(X)_1.
\]

Suppose  $X$ is a $p$-divisible group over an $\breve{\F}_p$-scheme $S$, equipped with an  action $\co_E \to \End(X)$.
As above, the induced action of $\co_E\otimes_{\Z_p} \co_S$ on  $\Lie(X)$ determines  a decomposition 
\begin{equation}\label{lie decomp}
\Lie(X) = \Lie(X)_0 \oplus \Lie(X)_1
\end{equation}
as a sum of locally free $\co_S$-modules.

\begin{definition}
The action $\co_E \to \End(X)$ has \emph{signature $(s_0,s_1)$} if  the summands on the right hand side of \eqref{lie decomp} are  locally free of ranks $s_0$ and $s_1$, respectively.
\end{definition}

There is an analogue of Serre's tensor construction for $p$-divisible groups.  
Suppose $X$ is any $p$-divisible group over an $\breve{\F}_p$-scheme $S$, endowed with an action of $\co_E$.  If $\Lambda$ is any free $\co_E$-module of finite rank, one can form a new $p$-divisible group $\Lambda \otimes X$ with $\co_E$-action over $S$ with functor of points 
\[
(\Lambda \otimes X)(T) =  \Lambda  \otimes   X(T) 
\] 
for any $T\to S$.  
Here both tensor products are over $\co_E$, but we typically  omit this from the notation.
Of course choosing an $\co_E$-basis of $\Lambda$ identifies $ \Lambda \otimes X$ with a product of $\mathrm{rank}_{\co_E}(\Lambda)$ copies of $X$.

Suppose further that $X$ is endowed with an $\co_E$-linear polarization $\lambda_X :X \to X^\vee$ (recall our convention  \eqref{conjugate convention} for the induced action of $\co_E$ on $X^\vee$), while $\Lambda$ is endowed with a hermitian form $h : \Lambda \times \Lambda \to \co_E$. 
This data induces an $\co_E$-linear polarization 
\[
\lambda_{ \Lambda  \otimes X} :  \Lambda \otimes  X \to (  \Lambda\otimes X)^\vee
\]
constructed as follows.
First note that if $G$ is any $p$-divisible group over $S$ endowed with an $\co_E$-action, there are isomorphisms of finite flat group schemes
\begin{equation}\label{trace iso}
\underline{\Hom}_{\overline{\co}_E}(G[p^k] , \co_E \otimes \mu_{p^k})
\iso \underline{\Hom}(G[p^k] ,  \mu_{p^k}) \iso G^\vee[p^k], 
\end{equation}
compatible as $k$ varies, where the  first isomorphism is defined by the trace map $\co_E \to \Z_p$, and the second is the definition of $G^\vee$.  The subscript $\overline{\co}_E$ indicates  $\co_E$-conjugate-linear homomorphisms. 

Note that the composition \eqref{trace iso} is $\co_E$-linear, where the action of $\co_E$ on $\underline{\Hom}_{\overline{\co}_E}(G[p^k] , \co_E \otimes \mu_{p^k})$ is through its natural action on the $p$-divisible group $\co_E \otimes \mu_{p^k}$, and its action on $G^\vee[p^k]$ is via \eqref{conjugate convention}.

In particular, the principal polarization $\lambda_X$  induces, for every $k$, an $\co_E$-linear isomorphism
\[
\tilde{\lambda}_X: X[p^k]  \to 
\underline{\Hom}_{\overline{\co}_E}(X[p^k] , \co_E \otimes  \mu_{p^k}).
\]
The polarization  $\lambda_{\Lambda \otimes X}$  is defined, on $p^k$-torsion, as the composition of $\co_E$-linear maps
\[
\xymatrix{
{    ( \Lambda \otimes X) [ p^k]   } \ar@{=} [r]&  {  \Lambda \otimes  X[p^k]   }  \ar[d]   \\  
& {   \underline{\Hom}_{\overline{\co}_E} (      \Lambda \otimes X[p^k] , \co_E \otimes \mu_{p^k} )  }  \ar@{=}[r]&   {   (\Lambda \otimes X)^\vee[p^k] } 
}
\]
in which the vertical arrow sends  $a\otimes x \in  \Lambda \otimes X[p^k]$ to the $\co_E$-conjugate linear homomorphism
$ b\otimes y \mapsto h(a,b)\cdot \tilde{\lambda}_X(x)(y).$

 Let $\mathbb{Y}$ be the $p$-divisible group of a supersingular elliptic curve over $\breve{\F}_p$.
 In other words, $\mathbb{Y}$ is the (unique up to isomorphism) connected $p$-divisible group of dimension $1$ and height $2$.
Fix an action  $\co_E \to \End(\mathbb{Y})$ of signature $(1,0)$, and a principal polarization $\lambda_{\mathbb{Y}} : \mathbb{Y} \to \mathbb{Y}^\vee$.
Any such   polarization is necessarily $\co_E$-linear (under the convention of \eqref{conjugate convention}, as always).

Denote by $\overline{\mathbb{Y}}$ the $p$-divisible group $\mathbb{Y}$, but now endowed with the conjugate action of $\co_E$.
 Thus $\overline{\mathbb{Y}}$ has signature $(0,1)$.  The principal polarization $\lambda_\mathbb{Y}$ can  be viewed as a principal polarization $\lambda_{ \overline{\mathbb{Y}}} : \overline{\mathbb{Y}} \to \overline{\mathbb{Y}} ^\vee$, which is again  $\co_E$-linear.

\begin{proposition}\label{prop:frame}
Fix an integer $n\ge 2$.  
\begin{enumerate}
\item
If $\Lambda$ is a self-dual hermitian $\co_E$-lattice of rank $n$,  the natural $\co_E$-action on  
$\Lambda   \otimes  \overline{\mathbb{Y}}$
 has signature $(0,n)$, and the natural   $\co_E$-linear polarization   is principal.
\item
Conversely, suppose $\mathbb{X}$ is any $p$-divisible group  over $\breve{\F}_p$,  equipped with an $\co_E$-action of signature $(0,n)$.
There exists a  self-dual hermitian lattice $\Lambda$ of rank $n$, and an $\co_E$-linear isomorphism 
$
\mathbb{X} \iso \Lambda   \otimes  \overline{\mathbb{Y}}.
$
Moreover, if $\mathbb{X}$ is endowed with   an $\co_E$-linear principal polarization, this isomorphism may be chosen to identify the polarizations on source and target.
\end{enumerate}
We remark that there is a unique (up to isometry) self-dual hermitian lattice  $\Lambda$ of rank $n$, and so the $\mathbb{X}$ in (2), whether polarized or not, is unique up to isomorphism.
\end{proposition}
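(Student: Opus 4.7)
The plan is to treat (1) directly from the Serre tensor construction, then deduce (2) via Dieudonn\'e theory, with the polarization compatibility being the most delicate step.

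For (1), the Lie algebra functor commutes with Serre tensors, giving a natural identification
\[
\Lie(\Lambda\otimes\overline{\mathbb{Y}})\iso\Lambda\otimes_{\co_E}\Lie(\overline{\mathbb{Y}})
\]
compatible with the $\co_E$-action. Since $\overline{\mathbb{Y}}$ has signature $(0,1)$, its Lie algebra is a rank-one $\breve{\F}_p$-module on which $\co_E$ acts through $i_1$; tensoring with $\Lambda$ multiplies the rank by $n$ while preserving the type of action, yielding signature $(0,n)$. The polarization described in the excerpt factors as the isomorphism $\Lambda\otimes\overline{\mathbb{Y}}\to\Lambda^\vee\otimes\overline{\mathbb{Y}}$ induced by the self-duality $\Lambda\iso\Lambda^\vee$ of the hermitian form, followed by $\mathrm{id}\otimes\lambda_{\overline{\mathbb{Y}}}:\Lambda^\vee\otimes\overline{\mathbb{Y}}\to\Lambda^\vee\otimes\overline{\mathbb{Y}}^\vee\iso(\Lambda\otimes\overline{\mathbb{Y}})^\vee$; both factors are isomorphisms, so the composition is principal.

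For (2), the signature condition, together with the fact that $F$ and $V$ swap $D(\mathbb{X})_0$ and $D(\mathbb{X})_1$, forces $D(\mathbb{X})$ to be a free $\co_{\breve{E}}$-module of rank $n$ with $V:D(\mathbb{X})_1\to D(\mathbb{X})_0$ an isomorphism and $F:D(\mathbb{X})_0\to D(\mathbb{X})_1$ equal to $p$ times a bijection. Consequently every slope of $\mathbb{X}$ is $1/2$, so $\mathbb{X}$ is supersingular and isogenous to $\overline{\mathbb{Y}}^n$. Set
\[
\Lambda:=\Hom_{\co_E}(\overline{\mathbb{Y}},\mathbb{X}),
\]
naturally an $\End_{\co_E}(\overline{\mathbb{Y}})=\co_E$-module. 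Since both isocrystals are supersingular of slope $1/2$, the $E$-vector space $\Lambda[1/p]$ is $n$-dimensional, and $\Lambda$ is a full lattice in it, hence free of rank $n$ over the DVR $\co_E$. The evaluation map
\[
\Lambda\otimes_{\co_E}\overline{\mathbb{Y}}\to\mathbb{X}
\]
induces on Dieudonn\'e modules a map between free $\co_{\breve{E}}$-modules of equal rank $n$ respecting $F$ and $V$, and a direct check using the explicit form of $V$ above shows it is surjective, hence an isomorphism.

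Given a principal $\co_E$-linear polarization $\lambda:\mathbb{X}\to\mathbb{X}^\vee$, I define a pairing
\[
h:\Lambda\times\Lambda\to\co_E,\qquad h(a,b):=\lambda_{\overline{\mathbb{Y}}}^{-1}\circ b^\vee\circ\lambda\circ a,
\]
with values in $\End_{\co_E}(\overline{\mathbb{Y}})=\co_E$. The identity $\lambda^\vee=\lambda$ defining a polarization translates, via the convention \eqref{conjugate convention}, into hermitian symmetry of $h$, and the principality of $\lambda$ and $\lambda_{\overline{\mathbb{Y}}}$ ensures that $a\mapsto h(a,\cdot)$ is an isomorphism $\Lambda\iso\Hom_{\co_E}(\Lambda,\co_E)$; thus $\Lambda$ is self-dual. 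Unwinding the construction of $\lambda_{\Lambda\otimes\overline{\mathbb{Y}}}$ through the trace isomorphism \eqref{trace iso} then confirms that the isomorphism $\Lambda\otimes_{\co_E}\overline{\mathbb{Y}}\iso\mathbb{X}$ carries one polarization to the other. The final remark is standard: over the unramified quadratic extension $E/\Q_p$ the rank determines the isometry class of a self-dual $\co_E$-hermitian lattice, so $\mathbb{X}$ is pinned down up to isomorphism with or without its polarization. The main obstacle is this last compatibility: matching the abstractly defined $h$ with the concrete polarization produced by \eqref{trace iso}, and verifying that principality of $\lambda$ corresponds to self-duality of $\Lambda$ rather than merely nondegeneracy. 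Everything else reduces to standard computations with Dieudonn\'e modules and with hermitian forms over unramified local rings.
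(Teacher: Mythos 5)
Your proof is correct, but it takes a genuinely different route from the paper's.  The paper works entirely on the level of Dieudonn\'e modules: it uses the signature condition to show $\tau = p^{-1}F^2$ is a $\sigma^2$-semilinear automorphism of $D(\mathbb{X})_0$, chooses a $\Z_{p^2}$-basis of the $\tau$-fixed points, and uses it to split $D(\mathbb{X})$ into a direct sum of $n$ rank-two Dieudonn\'e submodules, exhibiting $\mathbb{X} \iso \overline{\mathbb{Y}}^n$.  The polarized case is then handled by showing the induced pairing on $D(\mathbb{X})_0^{\tau=\mathrm{id}}$ is a unit-determinant skew-hermitian form over $\Z_{p^2}$, choosing a diagonal basis, and matching the product polarization on $\overline{\mathbb{Y}}^n$ with the given one.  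You instead define $\Lambda := \Hom_{\co_E}(\overline{\mathbb{Y}},\mathbb{X})$ intrinsically and equip it with the Kudla--Rapoport hermitian form $h(a,b) = \lambda_{\overline{\mathbb{Y}}}^{-1}\circ b^\vee\circ\lambda\circ a$, avoiding any choice of basis.  This is precisely the functorial characterization the paper relegates to Remark \ref{rem:KR isometry}; you have in effect used it as the \emph{construction} of $\Lambda$ rather than as an afterthought.  Your route is cleaner conceptually and makes the independence from choices manifest, while the paper's route is more elementary and makes freeness and self-duality transparent by construction.

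Two spots in your argument are stated more confidently than they are justified, though neither is wrong.  First, the claim that the evaluation map $\Lambda\otimes_{\co_E}\overline{\mathbb{Y}}\to\mathbb{X}$ is an isomorphism is attributed to ``a direct check using the explicit form of $V$'' --- but surjectivity is not visible from the shape of $V$ alone.  What is actually needed (and what the paper supplies) is the Frobenius-descent statement $D(\mathbb{X})_0^{\tau=\mathrm{id}}\otimes_{\Z_{p^2}}\breve{\Z}_p = D(\mathbb{X})_0$, which is equivalent to the evaluation map being an isomorphism once you observe that $\Lambda \iso D(\mathbb{X})_0^{\tau=\mathrm{id}}$ under $a\mapsto a(e_0)$.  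Second, the step ``principality of $\lambda$ and $\lambda_{\overline{\mathbb{Y}}}$ ensures $\Lambda$ is self-dual'' requires an argument that passing to $\Hom_{\co_E}(\overline{\mathbb{Y}},-)$ turns the perfect pairing of $D(\mathbb{X})$ with itself into a perfect pairing of $\Lambda$ with itself over $\co_E$ (not just nondegenerate after inverting $p$); this again reduces to the same $\tau$-descent statement.  Both points are genuine and worth spelling out, but your overall strategy is sound and arguably preferable if one wants the canonical isomorphism $\Lambda\iso\Hom_{\co_E}(\overline{\mathbb{Y}},\mathbb{X})$ from the start.
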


\begin{proof}
The first claim is obvious from the definitions. 

For the second claim, the signature condition on $\mathbb{X}$ implies that its Dieudonn\'e module satisfies
\[
V D(\mathbb{X})_0 = p D(\mathbb{X})_1 \quad \mbox{and}\quad V D(\mathbb{X})_1 = D(\mathbb{X})_0 .
\]
It follows that  $\tau = p^{-1} F^2$ defines a $\sigma^2$-semi-linear automorphism of $D(\mathbb{X})_0$,
whose fixed points $D(\mathbb{X})_0^{\tau=\mathrm{id}}$
form a free $\Z_{p^2} = \breve{\Z}_p^{\sigma^2=\mathrm{id}}$-module with
\[
D(\mathbb{X})_0^{\tau=\mathrm{id}}\otimes_{\Z_{p^2}} \breve{\Z}_p = D(\mathbb{X})_0.
\]

Choose a basis
\[
y_1,\ldots, y_n \in D(\mathbb{X})_0^{\tau=\mathrm{id}}\subset D(\mathbb{X})_0.
\]
If  we define $x_i \in D(\mathbb{X})_1$ by $V x_i =y_i$, then each pair $x_i,y_i$ generates a $\breve{\Z}_p$-submodule 
$D^{(i)} \subset D(\mathbb{X})$ stable under $F$ and $V$.   The decomposition
\[
D(\mathbb{X}) = D^{(1)} \oplus \cdots \oplus D^{(n)}
\]
of Dieudonn\'e modules corresponds to a decomposition $\mathbb{X} = X^{(1)} \times \cdots \times X^{(n)}$ of $p$-divisible groups, with each factor $\co_E$-linearly isomorphic to $\overline{\mathbb{Y}}$.
  In other words, 
\begin{equation}\label{bbXsplit}
\mathbb{X} \iso \overline{\mathbb{Y}} \times \cdots \times  \overline{\mathbb{Y}}. 
\end{equation}

If we also assume that $\mathbb{X}$ admits an $\co_E$-linear principal polarization, there is an induced  perfect alternating pairing
\[
\lambda_\mathbb{X}: D(\mathbb{X})  \times D(\mathbb{X}) \to \breve{\Z}_p
\]
satisfying $\lambda_\mathbb{X}( F x, y) = \lambda_\mathbb{X}( x, Vy)^\sigma$ and 
$\lambda_\mathbb{X}( \alpha x, y) = \lambda_\mathbb{X}( x, \bar{\alpha} y)$ for all $\alpha\in \co_E$ and $x,y\in D(\mathbb{X})$.
Using these properties, we see that 
\[
\langle x,y \rangle  \define  p^{-1} \lambda_\mathbb{X}( x, V y)
\]
defines a $\Z_{p^2}$-valued skew-hermitian form on $D(\mathbb{X})_0^{\tau=\mathrm{id}}$ of unit determinant.  
Such a form admits a diagonal basis, and we assume that $y_1,\ldots, y_n$ is chosen in such a way.
The  polarization on the left hand side of \eqref{bbXsplit} then identified with the the product of \emph{some} principal polarizations on the factors of the right hand side.
 As the principal polarization on $\overline{\mathbb{Y}}$ is unique up to scaling by $\Z_p^\times$, we may  choose the isomorphism \eqref{bbXsplit} so that the given polarization on $\mathbb{X}$  matches the product of the principal polarization of $\overline{\mathbb{Y}}$ that we originally fixed.

Thus, whether $\mathbb{X}$ was polarized or not, we may take $\Lambda=\co_E^n$  with the hermitian form determined by the $n\times n$ identity matrix. 
\end{proof}

\begin{remark}\label{rem:KR isometry}
Why  have we expressed the second claim in Proposition \ref{prop:frame} in such a peculiar way, when the proof actually shows that $\mathbb{X}$ is isomorphic to $n$ copies of $\overline{\mathbb{Y}}$?
One reason is that such an isomorphism is not canonical, but the presentation $\mathbb{X} \iso\Lambda   \otimes  \overline{\mathbb{Y}}$ is.  Indeed, once such a presentation is known to exist, it is easy to see that 
\[
\Lambda \iso \Hom_{\co_E} ( \overline{\mathbb{Y}} , \mathbb{X} ),
\]
in which the right hand side is endowed with the  hermitian form  characterized by the relation  (3.1) of  \cite{KR1}.
\end{remark}

 
 \subsection{The Rapoport-Zink space}
 \label{ss:RZbasics}
 

For the remainder of \S \ref{s:partialRZ},  we work with a fixed  self-dual hermitian $\co_E$-lattice  $\Lambda$  of rank $n$.  It is unique up to isomorphism.

We next define a $\mathrm{GU}(2,n-2)$ Rapoport-Zink space $\mathrm{RZ}$, parametrizing $p$-divisible groups with extra structure.  This extra structure would normally include a  quasi-isogeny to a fixed supersingular $p$-divisible group $\mathbb{X}$ (the \emph{framing object}), endowed   with an $\co_E$-action of signature $(2,n-2)$ and an $\co_E$-linear principal polarization.  Such a framing object is unique up to quasi-isogeny by 
Proposition 1.15 of \cite{Vollaard}, but that result actually proves more:
any such $\mathbb{X}$  is  also  quasi-isogenous to  the $p$-divisible group   $\Lambda\otimes\overline{\mathbb{Y}}$ of signature $(0,n)$, which can therefore also serve as a framing object.  This is what we choose to do.

\begin{definition}\label{def:fullRZ}
 For an $\breve{\F}_p$-scheme $S$,  denote by $\mathrm{RZ}(S)$ the set of isomorphism classes of triples $(X,\lambda_X,\varrho_X)$ in which 
\begin{itemize}
\item
$X$ is a $p$-divisible group over $S$ equipped with an $\co_E$-action of signature $(2,n-2)$, 
\item
$\lambda_X : X \to X^\vee$ is an $\co_E$-linear principal polarization (using the convention of \eqref{conjugate convention}, as always), 
\item
$
\varrho_X :  X \dashrightarrow     \Lambda   \otimes  \overline{\mathbb{Y}}_S  
$
is an $\co_E$-linear quasi-isogeny identifying polarizations up to $\Q_p^\times$-scaling.
\end{itemize}
\end{definition}

The results of \cite{RZbook} show that the functor $\mathrm{RZ}$ is represented by a formal scheme over $\breve{\F}_p$,  locally formally of finite type, and formally smooth of dimension $2n-4$.
As noted in the introduction, its underlying reduced scheme will turn out to have dimension $n-2$.
When no confusion can arise, we usually write $X \in \mathrm{RZ}(S)$ instead of $(X,\lambda_X,\varrho_X)\in \mathrm{RZ}(S)$.

For any point $X \in \mathrm{RZ}(S)$ there is a commutative diagram 
\begin{equation}\label{more quasi}
\xymatrix{
&   {  X  }  \ar@{-->}[rd]^{j^\vee}  \ar@{-->}[dd]_{\varrho_X} \\
{p\Lambda \otimes \overline{\mathbb{Y}}_S  } \ar@{-->}[ur]^{ j } \ar[dr]_i  & &  {p^{-1}\Lambda \otimes \overline{\mathbb{Y}}_S  }  \\
 &  { \Lambda \otimes \overline{\mathbb{Y} }_S   }  \ar[ur]_{i^\vee},
}
\end{equation}
in which the isogenies  $i$ and $i^\vee$ are induced by the inclusions $p\Lambda \subset \Lambda \subset p^{-1}\Lambda$, and the quasi-isogenies  $j$ and $j^\vee$ are uniquely determined by the commutativity.  
One can easily check that the triangle on the right is, as the notation suggests, obtained by dualizing the triangle on the left  using the principal polarizations on $X$ and $\Lambda \otimes \overline{\mathbb{Y}}_S$.

\begin{definition}\label{def:partialRZ}
The \emph{partial Rapoport-Zink space of signature $(2,n-2)$}  is the closed formal subscheme $\mathrm{RZ}_\Lambda \subset \mathrm{RZ}$ cut out by the following conditions:
\begin{itemize}
\item
The quasi-isogeny $\varrho_X :  X \dashrightarrow    \mathbb{X}_S$ identifies the polarizations on source and target (not just up to scaling).
\item
The arrows $j$ and $j^\vee$ in \eqref{more quasi} are isogenies (not just quasi-isogenies).
\end{itemize} 
\end{definition}

\begin{remark}\label{rem:projectivity}
By the argument of Lemma 4.2 of  \cite{VollaardWedhorn}  (or \S 2.2 of \cite{RZbook}), the functor $\mathrm{RZ}_\Lambda$ on $\breve{\F}_p$-schemes is represented by a  projective scheme (not just a formal scheme), denoted the same way.
\end{remark}

\begin{remark}\label{rem:domain}
  Let  $G=\mathrm{GU}(\Lambda)$, a reductive group scheme  over $\breve{\Z}_p$ whose  group of $\Q_p$-points  acts on $\mathrm{RZ}$.
Indeed, any  $ \gamma \in G(\Q_p)$  determines an $\co_E$-linear quasi-isogeny from  $\mathbb{X}=  \Lambda  \otimes \overline{\mathbb{Y}} $ to itself, respecting the polarization up to $\Q_p^\times$-scaling, and hence defines an automorphism of $\mathrm{RZ}$ by
\[
 (X,\lambda_X,\varrho_X)  \mapsto (X,\lambda_X, \gamma \circ \varrho_X).
\]
The partial Rapoport-Zink space of Definition \ref{def:partialRZ} satisfies
\begin{equation}\label{translate cover}
\mathrm{RZ}(\breve{\F}_p) =
 \bigcup_{\gamma \in G(\Q_p)} \gamma\cdot \mathrm{RZ}_\Lambda(\breve{\F}_p)  ,
\end{equation}
and so  serves as a kind of approximate fundamental domain for this action.

The equality \eqref{translate cover}  should be thought of as a signature $(2,n-2)$ analogue of the crucial Lemma 2.1 of \cite{Vollaard}, and it is possible to give a direct (if quite technical) linear algebraic proof along the same lines.  
We omit this argument because  \eqref{translate cover} will fall out during the proof of Corollary \ref{cor:component cover} below, but that argument makes use of the  results of \cite{XZ}, and so is of a far less elementary nature than the approach of \cite{Vollaard}. 
\end{remark}

 
 \subsection{Description of the closed points}
 \label{ss:partial points}
 

In this subsection we give a concrete description of the closed points of the partial Rapoport-Zink space $\mathrm{RZ}_\Lambda$ of Definition \ref{def:partialRZ}  in terms of lattices.
Endow 
\[
\breve{\Lambda} = \Lambda \otimes_{\Z_p} \breve{\Z}_p
\]
 with the $\sigma$-semi-linear automorphism $\Phi = \mathrm{id} \otimes \sigma$.   Recalling \eqref{general decomp}, this operator interchanges the direct summands 
$ \breve{\Lambda}_0$ and $\breve{\Lambda}_1$.

The hermitian form $h$ on $\Lambda$ extends $\breve{\Z}_p$-bilinearly to a pairing
\[
h : \breve{\Lambda} \times \breve{\Lambda} \to \co_{\breve{E}}
\]
satisfying 
\begin{equation}\label{hPhi}
h (\Phi x,\Phi y) = h ( x, y) ^\sigma 
\end{equation}
 where the $\sigma$ on the right  is the Frobenius on the second factor of $\co_{\breve{E} }=  \co_E\otimes_{\Z_p} \breve{\Z}_p$.
We further extend $h$ to a  $\breve{\Q}_p$-bilinear pairing on $\breve{\Lambda}[1/p]$.

\begin{proposition}\label{prop:hyperspecial points}
There is a bijection $X \mapsto L(X)$  from $\mathrm{RZ}_\Lambda(\breve{\F}_p)$ to the set of  self-dual $\co_{\breve{E}}$-lattices $L \subset \breve{\Lambda} [1/p]$  lying between $p\breve{\Lambda}$ and $p^{-1}\breve{\Lambda}$, and satisfying
\[
 pL_0  \stackrel{n-2}{\subset} \Phi^{-1} L_1 \stackrel{2}{\subset} L_0
\quad  \mbox{and}\quad
p L_1  \stackrel{2}{\subset} p \Phi^{-1} L_0  \stackrel{n-2}{\subset}   L_1.
\]
\end{proposition}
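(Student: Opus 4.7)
The plan is to pass through covariant Dieudonné theory and realize each $\breve{\F}_p$-point of $\mathrm{RZ}_\Lambda$ as a $V$-stable sublattice of the isocrystal of $\mathbb{X}$. First I would fix once and for all an $\co_E$-linear isomorphism $D(\overline{\mathbb{Y}}) \cong \co_{\breve{E}}$ identifying the principal polarization on $\overline{\mathbb{Y}}$ with the standard one; this uses the uniqueness of $\overline{\mathbb{Y}}$ up to polarized isomorphism. The Serre tensor construction then provides an $\co_{\breve{E}}$-linear and polarization-preserving isomorphism
\[
D(\mathbb{X}) = D(\Lambda \otimes_{\co_E} \overline{\mathbb{Y}}) \iso \Lambda \otimes_{\co_E} \co_{\breve{E}} = \breve{\Lambda}.
\]
A direct computation from the signature $(0,n)$ condition on $\mathbb{X}$ together with $FV=VF=p$ shows that the Verschiebung acts on $\breve{\Lambda}$ by $V|_{\breve{\Lambda}_1} = \Phi^{-1}$ and $V|_{\breve{\Lambda}_0} = p\Phi^{-1}$.

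Given a point $(X,\lambda_X,\varrho_X) \in \mathrm{RZ}_\Lambda(\breve{\F}_p)$, I would define $L(X) \subset \breve{\Lambda}[1/p]$ as the image of $D(X)$ under the isomorphism of isocrystals $D(\varrho_X) : D(X)[1/p] \xrightarrow{\sim} \breve{\Lambda}[1/p]$, and then check the following correspondences. First, that the arrows $j$ and $j^\vee$ in \eqref{more quasi} are honest isogenies is equivalent to the outer inclusions $p\breve{\Lambda} \subset L(X) \subset p^{-1}\breve{\Lambda}$. Second, that $\varrho_X$ identifies the polarizations (not merely up to scaling) translates, via \eqref{hPhi} and the construction of $\lambda_\mathbb{X}$ from the hermitian form, into self-duality of $L(X)$ with respect to $h$. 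Third, the $\co_E$-stability of $D(X)$ gives the decomposition $L(X) = L(X)_0 \oplus L(X)_1$. Finally, the $V$-stability of $D(X)$ reads $\Phi^{-1}L(X)_1 \subset L(X)_0$ and $p\Phi^{-1}L(X)_0 \subset L(X)_1$, and the signature $(2,n-2)$ condition forces the quotients $L(X)_0/\Phi^{-1}L(X)_1$ and $L(X)_1/p\Phi^{-1}L(X)_0$ to have $\breve{\F}_p$-dimensions exactly $2$ and $n-2$, respectively, which combined with the outer inclusions from the first point assembles into the two chains displayed in the proposition.

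For the inverse direction, given a lattice $L$ with the stated properties, the operators $F$ and $V$ on $\breve{\Lambda}[1/p]$ preserve $L$ precisely because of the chain conditions, so $L$ acquires the structure of a Dieudonné module with $\co_E$-action of signature $(2,n-2)$; self-duality produces a principal polarization, and the covariant Dieudonné equivalence over the perfect field $\breve{\F}_p$ yields a $p$-divisible group $X$ with all its extra structure, the inclusion $L \subset \breve{\Lambda}[1/p]$ providing the quasi-isogeny $\varrho_X$. The main technical point is bookkeeping the conventions at the start: choosing the right normalization for $V$ on $D(\overline{\mathbb{Y}})$ so that the index-$0$ versus index-$1$ roles and the appearance of $\Phi^{-1}$ (rather than $\Phi$) come out in the precise form stated, and checking that under this normalization the hermitian form $h$ really does correspond to the polarization pairing on $D(\mathbb{X})$. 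Once these identifications are fixed, all other verifications are direct linear algebra.
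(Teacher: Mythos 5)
The proposal is correct and follows the same route as the paper's proof: fix a trivialization $D(\overline{\mathbb{Y}}) \iso \co_{\breve{E}}$ to identify $D(\mathbb{X}) \iso \breve{\Lambda}$, convert the Dieudonn\'e module $D(X)$ into a lattice in $\breve{\Lambda}[1/p]$, and translate the signature, polarization, and isogeny conditions into the displayed chain conditions and self-duality; your formulas $V|_{\breve{\Lambda}_1} = \Phi^{-1}$, $V|_{\breve{\Lambda}_0} = p\Phi^{-1}$ are the $V$-side of the paper's contraction diagrams for $F$. One small slip in phrasing: the outer inclusions $pL_0 \subset \Phi^{-1}L_1$ and $pL_1 \subset p\Phi^{-1}L_0$ come from $F$-stability of $D(X)$ (combined with $FV=p$), not from the isogeny condition on $j,j^\vee$ that gives $p\breve{\Lambda} \subset L \subset p^{-1}\breve{\Lambda}$ --- you correctly invoke both $F$- and $V$-stability in the inverse direction, so this is a cosmetic rather than substantive issue.
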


\begin{proof}
As this is the routine identification (see especially  Proposition 1.10 of \cite{Vollaard}) of points of $\mathrm{RZ}(\breve{\F}_p)$ with lattices in the isocrystal of the framing object, 
we only  explain how to account for our unusual choice of framing object.

Fix an $\co_{\breve{E}}$-linear isomorphism
\[
D(\overline{\mathbb{Y}}) \iso \co_{\breve{E}},
\]
and use this to identify
\begin{equation}\label{contraction}
D(\Lambda \otimes \overline{\mathbb{Y}})
\iso 
\breve{\Lambda} \otimes_{\co_{\breve{E}}} D(\overline{\mathbb{Y}}) 
\iso \breve{\Lambda}.
\end{equation}
This identification is only well-defined up to scaling by $\co_{\breve{E}}^\times$, but as we are only interested in viewing lattices in the left hand side as lattices in the right hand side, this ambiguity is harmless.

The signature $(0,1)$ condition on $\overline{\mathbb{Y}}$ implies that 
\[
V D( \overline{\mathbb{Y}})_1 = D( \overline{\mathbb{Y}})_0 
\quad\mbox{and}\quad
V D( \overline{\mathbb{Y}})_0 = pD( \overline{\mathbb{Y}})_1 .
\]
Using this, one checks that the identification  \eqref{contraction} restricts to identifications
\begin{equation}\label{contraction diagrams}
\xymatrix{
{  \breve{\Lambda}_0 } \ar[d]_{  p \Phi   }   \ar@{=}[r]   &  { D(\Lambda \otimes \overline{\mathbb{Y}})_0     } \ar[d]^{  F  }    \\
{   \breve{\Lambda}_1 } \ar@{=}[r]     &  { D(\Lambda \otimes \overline{\mathbb{Y}})_1    } 
}
\qquad \qquad
\xymatrix{
{    \breve{\Lambda}_1} \ar[d]_{ \Phi   }   \ar@{=}[r]   &  { D(\Lambda \otimes \overline{\mathbb{Y}})_1    } \ar[d]^{  F  }    \\
{  \breve{\Lambda}_0  } \ar@{=}[r]     &  { D(\Lambda \otimes \overline{\mathbb{Y}})_0   } 
} 
\end{equation}
in which the diagrams  commute up to scaling by $\breve{\Z}_p^\times$.   In fact, one can choose the trivialization of $D(\overline{\mathbb{Y}})$ so that they commute without any scaling factor, but we have no need to do this.

Given a point  $X \in \mathrm{RZ}_\Lambda(\breve{\F}_p)$, we use the quasi-isogeny $\varrho_X :X \dashrightarrow \Lambda \otimes \overline{\mathbb{Y}}$ to define 
\begin{equation}\label{Ldef}
 L(X) = D(X)  \stackrel{\varrho_X}{\subset} D(\Lambda \otimes \overline{\mathbb{Y}}) [1/p]
 \stackrel{ \eqref{contraction}} {=} \breve{\Lambda}[1/p] . 
 \end{equation}
The signature condition on the $\co_E$-action is equivalent to
\[
pD(X)_0 \stackrel{n-2}{\subset} VD(X)_1 \stackrel{2}{\subset} D(X)_0 \quad \mbox{and}\quad
pD(X)_1 \stackrel{2}{\subset} VD(X)_0 \stackrel{n-2}{\subset} D(X)_1,
\]
which, using \eqref{contraction diagrams}, translates to the inclusions
\[
 pL_0  \stackrel{n-2}{\subset} \Phi^{-1} L_1 \stackrel{2}{\subset} L_0
\quad  \mbox{and}\quad
p L_1  \stackrel{2}{\subset} p \Phi^{-1} L_0  \stackrel{n-2}{\subset}   L_1.
\]
This defines the desired bijection.
\end{proof}

\begin{remark}
A more functorial characterization of $L(X)$, not requiring a choice of \eqref{contraction},  is 
\begin{equation}\label{canonical L}
L(X) = \Hom_{\co_{\breve{E}}}(D(\mathbb{\overline{Y}}) , D(X) ),
\end{equation}
viewed as a lattice in
\[
\Hom_{\co_{\breve{E}}}(D(\mathbb{\overline{Y}}) , D( \Lambda \otimes \overline{\mathbb{Y}} ))[1/p] 
= \breve{\Lambda}[1/p]
\]
using the quasi-isogeny $\varrho_X :X \dashrightarrow \Lambda \otimes \overline{\mathbb{Y}}$.
\end{remark}

To  rewrite Proposition \ref{prop:hyperspecial points} in terms of lattices in $\breve{\Lambda}_0[1/p]$, 
define 
\begin{equation}\label{twisted pairing}
b   : \breve{\Lambda}_0  \times \breve{\Lambda}_0 \to \breve{\Z}_p
\end{equation}
 by 
\[
b( x,y ) = \mathrm{Tr}\, h(x,\Phi y) ,
\]
where $\mathrm{Tr} : \co_{\breve{E}} \to \breve{\Z}_p$ is the trace.
As $h(x  , \Phi y ) \in  e_0 \co_{\breve{E}}$,  we can equivalently characterize \eqref{twisted pairing} by the relation
\[
h(x,\Phi y) = (  b(x,y) , 0  )  \in  \breve{\Z}_p \times \breve{\Z}_p \stackrel{\eqref{orthopotents}}{=}  \co_{\breve{E}} .
\]
The  pairing \eqref{twisted pairing} is $\breve{\Z}_p$-linear in the first variable,  $\sigma$-linear in the second, and satisfies
\[
b( \Phi^2 x,y )  =  \mathrm{Tr}\,  h( \Phi^2 x, \Phi y ) \stackrel{\eqref{hPhi}}{=} 
 \mathrm{Tr}\,  h( \Phi  x,  y )^\sigma 
 =  \mathrm{Tr}\,   h( y, \Phi  x  ) ^\sigma
=b( y,x ) ^\sigma.
\]

 Any $\breve{\Z}_p$-lattice  $L_0 \subset \breve{\Lambda}_0[1/p]$ has a right dual lattice
\begin{equation}\label{right dual}
L_0^* = \{ x \in  \breve{\Lambda}_0[1/p] :  b(   L_0, x ) \subset \breve{\Z}_p \}
\end{equation}
with respect to \eqref{twisted pairing}, and one can easily verify the relations  
\begin{equation}\label{twisted dual properties}
L_0^{**} = \Phi^{-2} L_0 \quad \mbox{and}\quad \Phi^2(L_0^*) = (\Phi^2 L_0)^*.
\end{equation}
If $L \subset \breve{\Lambda}[1/p]$ is an $\co_{\breve{E}}$-lattice self-dual under $h$, then 
\begin{equation}\label{dual shift}
\Phi^{-1} L_1 = L_0^* .
\end{equation}
In particular, 
$
\breve{\Lambda}_0 = \Phi^{-1} \breve{\Lambda}_1 = \breve{\Lambda}_0^*.
$

\begin{corollary}\label{cor:half hyperspecial}
There is a bijection
\[
\mathrm{RZ}_\Lambda( \breve{\F}_p) 
\iso
\left\{
\begin{array}{c}  \breve{\Z}_p\mbox{-lattices }     L_0  \subset  \breve{\Lambda}_0[1/p]    \\
\mbox{satisfying} \\
  p  \breve{\Lambda}_0  \subset L_0^* \stackrel{2}{\subset} L_0  \subset p^{-1} \breve{\Lambda}_0  
\end{array}
\right\} ,
\]
under which  $X \in \mathrm{RZ}_\Lambda( \breve{\F}_p)$ corresponds, as in \eqref{Ldef}, to
\[
L_0^*  = VD(X)_1  \quad \mbox{and} \quad 
L_0    = D(X)_0   .
\]
\end{corollary}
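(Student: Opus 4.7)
The plan is to deduce the corollary as a direct repackaging of Proposition~\ref{prop:hyperspecial points}, by projecting every condition appearing there to the $e_0$-component and using \eqref{dual shift} to eliminate $L_1$ entirely.

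For the forward direction, I would start with $X \in \mathrm{RZ}_\Lambda(\breve{\F}_p)$, take the self-dual $\co_{\breve{E}}$-lattice $L = L(X)$ provided by Proposition~\ref{prop:hyperspecial points}, and set $L_0 = e_0 L$. The chain $p\breve{\Lambda}_0 \subset L_0 \subset p^{-1}\breve{\Lambda}_0$ is immediate from $p\breve{\Lambda} \subset L \subset p^{-1}\breve{\Lambda}$. The colength-$2$ inclusion $L_0^* \stackrel{2}{\subset} L_0$ is obtained by rewriting $\Phi^{-1} L_1 \stackrel{2}{\subset} L_0$ through \eqref{dual shift}. Finally, $p\breve{\Lambda}_0 \subset L_0^*$ comes by taking the right dual of $L_0 \subset p^{-1}\breve{\Lambda}_0$ with respect to the pairing $b$, using $\breve{\Lambda}_0^* = \breve{\Lambda}_0$ and the fact that $(p^{-1}\breve{\Lambda}_0)^* = p\breve{\Lambda}_0$ (valid because $b$ is $\sigma$-linear in the second variable and $p$ is Frobenius-fixed).

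For the reverse direction, given $L_0$ satisfying the stated chain, I would define $L_1 = \Phi L_0^*$ and $L = L_0 \oplus L_1 \subset \breve{\Lambda}[1/p]$. The identity $\Phi^{-1} L_1 = L_0^*$ is then built in, so \eqref{dual shift} already tells us that $L$ will be self-dual under $h$, provided we also verify the companion identity $\Phi^{-1} L_0 = L_1^*$; this second identity follows from the first by taking right duals and invoking the biduality formula $L_0^{**} = \Phi^{-2} L_0$ from \eqref{twisted dual properties}. The various inclusions in Proposition~\ref{prop:hyperspecial points} then follow by translating back: $L_0^* \stackrel{2}{\subset} L_0$ gives $\Phi^{-1} L_1 \stackrel{2}{\subset} L_0$, applying $\Phi$ produces $L_1 \stackrel{2}{\subset} p \Phi^{-1} L_0$, and the outer inclusions $p\breve{\Lambda} \subset L \subset p^{-1}\breve{\Lambda}$ likewise decompose and transport along $\Phi$.

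Once both constructions are in place, they are manifestly mutually inverse, since $L \mapsto e_0 L$ followed by $L_0 \mapsto L_0 \oplus \Phi L_0^*$ recovers $L$ (using \eqref{dual shift} for the self-dual $L$), and in the other order it is the identity by construction. The remaining identifications $L_0 = D(X)_0$ and $L_0^* = V D(X)_1$ are read off from the definition \eqref{Ldef} of $L(X)$ together with the commutative diagrams \eqref{contraction diagrams}: indeed, $F : D(X)_1 \to D(X)_0$ corresponds to $\Phi : \breve{\Lambda}_1 \to \breve{\Lambda}_0$, so $V : D(X)_1 \to D(X)_0$ corresponds to $\Phi^{-1}$, giving $V D(X)_1 = \Phi^{-1} L_1 = L_0^*$.

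The only subtle point I anticipate is keeping track of the interplay between the $\co_{\breve{E}}$-valued hermitian form $h$ on $L$ and the $\breve{\Z}_p$-valued form $b$ on $L_0$; concretely, the issue is verifying that the self-duality of $L$ with respect to $h$ really is captured by the single relation $\Phi^{-1} L_1 = L_0^*$, so that no extra conditions are smuggled in. This reduces to the observation that $h$ restricted to $\breve{\Lambda}_0 \times \breve{\Lambda}_0$ and to $\breve{\Lambda}_1 \times \breve{\Lambda}_1$ vanishes (forcing $L^\vee = L$ to decompose along the idempotents), and that the two off-diagonal pairings $h : \breve{\Lambda}_0 \times \breve{\Lambda}_1 \to e_0 \co_{\breve{E}}$ and $h : \breve{\Lambda}_1 \times \breve{\Lambda}_0 \to e_1 \co_{\breve{E}}$ are interchanged by hermitian conjugation, so one of them suffices.
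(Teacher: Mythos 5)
Your proposal is correct and takes the same route as the paper: use \eqref{dual shift} to eliminate $L_1$ and rewrite the conditions of Proposition~\ref{prop:hyperspecial points} purely in terms of $L_0$ and its right dual $L_0^*$, then rearrange the two resulting chains into one. The paper states this in two lines without detail; you usefully spell out the one genuine point (that the single relation $L_1 = \Phi L_0^*$ really recovers self-duality of $L$, via the biduality formula in \eqref{twisted dual properties}), though the notation $L_1^*$ in your companion identity is slightly loose since the pairing $b$ of \eqref{twisted pairing} lives on $\breve{\Lambda}_0[1/p]$ rather than $\breve{\Lambda}_1[1/p]$.
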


\begin{proof}
Using  \eqref{dual shift},   Proposition \ref{prop:hyperspecial points} is equivalent to 
\[
\mathrm{RZ}_\Lambda( \breve{\F}_p) 
\iso
\left\{
\begin{array}{c}  \breve{\Z}_p\mbox{-lattices }   \\  L_0  \subset  \breve{\Lambda}_0[1/p]  \end{array} : 
\begin{array}{c}
pL_0 \subset L_0^* \stackrel{2}{\subset} L_0\\
  p  \breve{\Lambda}_0  \subset  L_0  \subset p^{-1} \breve{\Lambda}_0 
  \end{array}  \right\} .
\]
The two chains of inclusions on the right hand side are equivalent to the single chain of inclusions in the statement of the corollary.
\end{proof}


\subsection{Decomposition by locally closed subsets}
\label{ss:hyperspecial strata}


Now let $X$ be the universal $p$-divisible group over $\mathrm{RZ}_\Lambda$.
 The bijection of Proposition \ref{prop:hyperspecial points} associates to every  $s \in \mathrm{RZ}_\Lambda(\breve{\F}_p)$ an inclusion of $\co_{\breve{E}}$-lattices
 \[
 p \breve{\Lambda} \subset L(X_s),
 \]
and hence a map of  $\breve{\F}_p$-vector spaces 
 \begin{equation}\label{stratum map}
  p \breve{\Lambda} / p^2 \breve{\Lambda} \to L(X_s)/pL(X_s).
 \end{equation}
 One can use Grothendieck-Messing crystals  to construct a morphism of vector bundles interpolating these maps as $s$ varies.

If $S$ is a scheme on which $p$ is locally nilpotent,   covariant  Grothendieck-Messing theory functorially associates to any $p$-divisible group $G$ over $S$ a short exact sequence
\begin{equation}\label{GMbundles}
0 \to \mathrm{Fil}^0 \mathscr{D}(G)  \to \mathscr{D}(G) \to \Lie(G) \to 0
\end{equation}
 of locally free $\co_S$-modules.  When $S=\Spec(\breve{\F}_p)$, this sequence is canonically identified with
 \[
 0 \to \frac{VD(G)}{pD(G)}   \to\frac{D(G)}{pD(G)} \to \frac{D(G)}{VD(G)} \to 0.
 \]

Over $\mathrm{RZ}_\Lambda$ there is a universal diagram
\begin{equation}\label{complete point diagram}
\xymatrix{
&   {  X  }  \ar[rd]^{j^\vee}  \ar@{-->}[dd]_{\varrho_X} \\
{p\Lambda \otimes \overline{\mathbb{Y}}  } \ar[ur]^{ j } \ar[dr]_i  & &  {p^{-1}\Lambda \otimes \overline{\mathbb{Y}}_S  }  \\
 &  { \Lambda \otimes \overline{\mathbb{Y} }   }  \ar[ur]_{i^\vee}
}
\end{equation}
(where we now view  $\overline{\mathbb{Y}}$ as a constant $p$-divisible group over $\mathrm{RZ}_\Lambda$).
Mimicking \eqref{canonical L}, we define a  vector bundle
 \begin{equation}\label{the L bundle}
\mathscr{L} =  \underline{\Hom}_{\co_E} ( \mathscr{D}(  \overline{\mathbb{Y}} ) ,  \mathscr{D}(X) ) 
 \end{equation}
with $\co_E$-action on $\mathrm{RZ}_\Lambda$.
The isogeny $j$ determines an inclusion 
$
p \Lambda  \subset \Hom_{\co_E} ( \overline{\mathbb{Y}} , X ) ,
$
and hence an  $\co_E$-linear  morphism 
\[
 p \Lambda   \otimes_{ \Z_p} \co_{ \mathrm{RZ}_\Lambda} 
\to  \mathscr{L} 
\]
whose fiber at any $s\in \mathrm{RZ}_\Lambda(\breve{\F}_p)$ is  canonically identified with \eqref{stratum map}.
It restricts to morphisms of vector bundles
\begin{equation}\label{c halves}
 p \breve{\Lambda}_0  \otimes_{ \breve{\Z}_p} \co_{ \mathrm{RZ}_\Lambda}  \to \mathscr{L}_0 
 \quad\mbox{and}\quad
p \breve{\Lambda}_1  \otimes_{ \breve{\Z}_p} \co_{ \mathrm{RZ}_\Lambda}  \to \mathscr{L}_1 .
\end{equation}

\begin{remark}\label{rem:untwist L}
A choice of isomorphism $D(\overline{\mathbb{Y}}) \iso \co_{\breve{E}}$ determines an isomorphism
$\mathscr{D}( \overline{\mathbb{Y}} ) \iso  \co_E \otimes_{\Z_p} \co_{\mathrm{RZ}_\Lambda}$, and hence also an isomorphism
\[
\mathscr{L} \iso \mathscr{D}(X).
\]
This  is  well-defined up to scaling by $( \co_{\breve{E}}  /  p\co_{\breve{E}}) ^\times$.
\end{remark}

\begin{definition}\label{def:RZk}
For any $k\ge 1$, define
\[
 \mathrm{RZ}_\Lambda^{\le k} \subset  \mathrm{RZ}_\Lambda
 \]
to be the largest closed subscheme over which the first map in \eqref{c halves}  has rank $\le k-1$.  More precisely, $\mathrm{RZ}_\Lambda^{\le k}$ is the closed subscheme cut out by the vanishing of 
\[
  \bigwedge\nolimits^k     ( p \breve{\Lambda}_0  \otimes_{ \breve{\Z}_p} \co_{ \mathrm{RZ}_\Lambda}   )   \to  \bigwedge\nolimits^k  \mathscr{L}_0 .
\]
We further define
$
\mathrm{RZ}_\Lambda^k = \mathrm{RZ}_\Lambda^{\le k} \smallsetminus \mathrm{RZ}_\Lambda^{\le k-1},
$
so that we have a decomposition 
\[
\mathrm{RZ}^\red_\Lambda= \bigsqcup_{k \ge 1} \mathrm{RZ}_\Lambda^{k,\red}
\]
into locally closed subschemes.
\end{definition}

\begin{definition}\label{def:GLinvariant}
Two $\breve{\Z}_p$-lattices $L$ and $L'$ in an $n$-dimensional $\breve{\Q}_p$-vector space (e.g.~$\breve{\Lambda}_0[1/p]$),  have \emph{relative position invariant} 
\[
\inv(L , L') = (a_1, \ldots, a_n)  \in   \Z^n
\]
if there is a  $\breve{\Z}_p$-basis $x_1,\ldots, x_n \in L'$ such that $p^{a_1} x_1 , \ldots, p^{a_n} x_n$  is a basis of $L$, and  $a_1 \ge a_2 \ge \cdots \ge a_n$.
\end{definition}

We now give a concrete description of the $\breve{\F}_p$-valued points of $\mathrm{RZ}_\Lambda^k$, in terms of lattices in $\breve{\Lambda}_0[1/p]$.
In  the notation of Definition \ref{def:GLinvariant},  the bijection of Corollary \ref{cor:half hyperspecial} becomes
\begin{align}\label{better half hyperspecial}
\mathrm{RZ}_\Lambda( \breve{\F}_p) 
 &  \iso
\left\{
\begin{array}{c}  \breve{\Z}_p\mbox{-lattices }   \\  L_0  \subset  \breve{\Lambda}_0[1/p]  \end{array} : 
\begin{array}{c}
  p  \breve{\Lambda}_0  \subset  L_0  \subset p^{-1} \breve{\Lambda}_0 \\
\mathrm{inv}(  L_0^*,   L_0) = (1,1,0, \ldots,0)  
  \end{array}  \right\} .
\end{align}

 \begin{proposition}\label{prop:strata points}
 Under the above bijection,  for any $1\le k \le \lfloor n/2\rfloor$ we have
    \[
\mathrm{RZ}_\Lambda^k( \breve{\F}_p) 
\iso  
\{ L_0  \in \mathrm{RZ}_\Lambda( \breve{\F}_p) : 
\inv(L_0 , \breve{\Lambda}_0) = \lambda_k \} ,
\]
in which 
\[
\lambda_k=( \underbrace{1,\ldots, 1}_{k-1\mathrm{\ times}}, 0,\ldots, 0 , \underbrace{-1,\ldots, -1}_{k\mathrm{\ times}} ) \in \Z^n.
\]
If $k>\lfloor n/2\rfloor$ then $\mathrm{RZ}_\Lambda^k = \emptyset$.
\end{proposition}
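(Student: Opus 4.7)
The plan is to pass to the concrete lattice description supplied by the bijection~\eqref{better half hyperspecial} and, at each $\breve{\F}_p$-valued point, identify the fiber of the map defining $\mathrm{RZ}_\Lambda^{\le k}$.

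At a point $s \in \mathrm{RZ}_\Lambda(\breve{\F}_p)$ corresponding to a lattice $L_0$, covariant Grothendieck-Messing theory together with a trivialization $\mathscr{D}(\overline{\mathbb{Y}}) \iso \co_{\breve E}$ (Remark~\ref{rem:untwist L}) identifies the fiber of the first map in~\eqref{c halves} with the reduction modulo $p$ of the inclusion $p\breve{\Lambda}_0 \hookrightarrow L_0$, namely
\[
p\breve{\Lambda}_0 / p^2\breve{\Lambda}_0 \to L_0 / pL_0.
\]
Writing $\inv(L_0,\breve{\Lambda}_0) = (a_1, \ldots, a_n)$ — with entries in $\{-1,0,1\}$ since $p\breve{\Lambda}_0 \subset L_0 \subset p^{-1}\breve{\Lambda}_0$ — and choosing a Smith normal form basis $e_1, \ldots, e_n \in \breve{\Lambda}_0$ such that $p^{a_i} e_i$ is a basis of $L_0$, the class of $p e_i$ maps to $p^{1-a_i}\cdot(p^{a_i} e_i) \in L_0/pL_0$, which is nonzero precisely when $a_i = 1$. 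Hence the rank of the fiber equals $a \define \#\{i : a_i = 1\}$, so $s \in \mathrm{RZ}_\Lambda^k(\breve{\F}_p)$ if and only if $a = k-1$.

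To pin down the full invariant, observe that $\inv(L_0^*, L_0) = (1,1,0,\ldots,0)$ in particular gives $[L_0 : L_0^*] = p^2$. Using $\breve{\Lambda}_0 = \breve{\Lambda}_0^*$ together with the $b$-dual basis of $\{e_i\}$ inside $\breve{\Lambda}_0$, one verifies that $L_0^*$ has basis $\{p^{-a_i}\tilde{e}_i\}$ for the corresponding dual basis $\tilde{e}_i \in \breve{\Lambda}_0$, so
\[
v(L_0^*) - v(L_0) = 2(c-a) \quad\text{with}\quad c \define \#\{i : a_i = -1\}.
\]
The equality $[L_0:L_0^*]=p^2$ therefore forces $c = a+1$, and combined with $a = k-1$ this yields $\inv(L_0,\breve{\Lambda}_0) = \lambda_k$ (with $b = n - 2k+1$ zeros in the middle), which is the claimed bijection.

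Finally, for $k > \lfloor n/2 \rfloor$, since $\mathrm{RZ}_\Lambda^k$ is of finite type over $\breve{\F}_p$ it suffices to show that no lattice $L_0$ of type $\lambda_k$ satisfies the constraints of~\eqref{better half hyperspecial}. When $k > (n+1)/2$ one has $b = n-2k+1 < 0$, so no such invariant exists. The borderline case is $n$ odd with $k = (n+1)/2$: then $b = 0$ but $c = (n+1)/2 > \lfloor n/2 \rfloor$. Here the distinction between the shape $(1,1,0,\ldots,0)$ and the weaker $[L_0:L_0^*]=p^2$ matters: the former is equivalent to also requiring $pL_0 \subset L_0^*$, which by~\eqref{twisted pairing} and $\sigma$-semi-linearity unpacks to the requirement $b(e_i,e_j)\in p\breve{\Z}_p$ for all pairs $i,j$ with $a_i = a_j = -1$. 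Equivalently, the $c$-dimensional subspace of $\breve{\Lambda}_0/p\breve{\Lambda}_0$ spanned by the reductions of those basis vectors must be totally isotropic for the induced nondegenerate sesquilinear form $\bar{b}$ on $\breve{\Lambda}_0/p\breve{\Lambda}_0$, whose maximal isotropic subspaces have dimension $\lfloor n/2 \rfloor$. Since $c > \lfloor n/2 \rfloor$ no such subspace can exist, and the stratum is empty. The main obstacle is this last step: justifying rigorously that $\inv(L_0^*, L_0) = (1,1,0,\ldots,0)$ encodes an isotropy condition on the reduced pairing and identifying the Witt index of the Frobenius-twisted form $\bar{b}$ over the algebraically closed residue field.
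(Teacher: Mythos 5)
Your proof is correct, and on the main stratum identification it follows the same route as the paper: both establish the bounds $-1\le a_i\le 1$ from \eqref{better half hyperspecial}, compute the rank of the fiber of the first map in \eqref{c halves} to be $\#\{i : a_i=1\}=k-1$ via a Smith normal form basis for $L_0$ relative to $\breve{\Lambda}_0$ (which agrees with the paper's expression $\dim(\breve{\Lambda}_0 / (\breve{\Lambda}_0 \cap L_0))$), and then combine the dualization $\inv(L_0^*,\breve{\Lambda}_0)=(-a_n,\ldots,-a_1)$ with the determinant constraint $[L_0 : L_0^*]=p^2$ to force $c=a+1$ and hence the shape $\lambda_k$.

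Where your treatment is more careful is the emptiness statement for $k>\lfloor n/2\rfloor$. The paper's proof asserts that $a_1+\cdots+a_n=-1$ is ``equivalent to $\inv(L_0,\breve{\Lambda}_0)=\lambda_k$ for some $1\le k\le\lfloor n/2\rfloor$,'' but for $n$ odd the sum condition together with $a_i\in\{-1,0,1\}$ only bounds $k\le (n+1)/2$, so the boundary case $k=(n+1)/2$ is left implicit. You correctly observe that the stronger shape condition $\inv(L_0^*,L_0)=(1,1,0,\ldots,0)$, i.e.\ $[L_0:L_0^*]=p^2$ together with $pL_0\subset L_0^*$, supplies the missing constraint: for $a_i=a_j=-1$ one has $p^{-1}e_i,\,p^{-1}e_j\in L_0$, so $b(p^{-1}e_i,p^{-1}e_j)=p^{-2}b(e_i,e_j)\in p^{-1}\breve{\Z}_p$ (using $\sigma$-linearity in the second argument and $\sigma(p)=p$), whence $b(e_i,e_j)\in p\breve{\Z}_p$. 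This yields the isotropy you invoke, and your extra step is a genuine improvement in precision over the text for $n$ odd.

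The obstacle you flag at the end is not actually an obstacle. You do not need to identify the Witt index, or indeed any structure theory of $\sigma$-sesquilinear forms over the algebraically closed field $\breve{\F}_p$. Since $\breve{\Lambda}_0=\breve{\Lambda}_0^*$, the pairing $b$ reduces to a \emph{perfect} pairing $\bar b$ on $\breve{\Lambda}_0/p\breve{\Lambda}_0$: the right radical is
\[
\{\, y\in\breve{\Lambda}_0 : b(\breve{\Lambda}_0,y)\subset p\breve{\Z}_p \,\}/p\breve{\Lambda}_0 = p\breve{\Lambda}_0^*/p\breve{\Lambda}_0 = 0,
\]
and the left radical vanishes for dimension reasons. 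Perfection alone gives $\dim\mathscr{F}^\perp = n-\dim\mathscr{F}$ for every subspace $\mathscr{F}$, so $\mathscr{F}\subset\mathscr{F}^\perp$ forces $\dim\mathscr{F}\le n/2\le\lfloor n/2\rfloor$. Applied to the $c$-dimensional isotropic subspace with $c=(n+1)/2$, this already gives the contradiction. So the step you were worried about is elementary, and your proof as a whole is complete.
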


\begin{proof}
 Fix any $L_0 \in \mathrm{RZ}_\Lambda( \breve{\F}_p)$.
 The conditions of \eqref{better half hyperspecial} imply 
\[
\inv(L_0 , \breve{\Lambda}_0)=(b_1, \ldots ,b_n)
\]
with all $-1\le b_i \le 1$, and dualizing shows that 
\[
 \inv( L_0^*  , \breve{\Lambda}_0 ) =   \inv(  L_1  , \breve{\Lambda}_1 ) =  (-b_n, \ldots , - b_1).
\]
These imply the equalities of lattices 
\[
  \bigwedge\nolimits^n L_0^*  = p^{-(b_1+\cdots + b_n)} 
\bigwedge\nolimits^n\breve{\Lambda}_0 = p^{-2(b_1+\cdots + b_n)}   \bigwedge\nolimits^n L_0
\]
in $\bigwedge\nolimits^n\breve{\Lambda}_0[1/p]$.  Combining this with $\mathrm{inv}(  L_0^*,   L_0) = (1,1,0, \ldots,0)$ shows that
\[ b_1+ \cdots + b_n =-1,\] which is equivalent to  $\inv(L_0 , \breve{\Lambda}_0)= \lambda_k$ for some $1\le k \le \lfloor n/2\rfloor$.

The fiber at $L_0 \in \mathrm{RZ}_\Lambda( \breve{\F}_p)$  of the first morphism in  \eqref{c halves} is identified with the linear map
\[
 \frac{ p \breve{\Lambda}_0 }{  p^2 \breve{\Lambda}_0 }    \to \frac{L_0}{pL_0}
\] 
of  rank
 $
 \mathrm{dim}_{\breve{\F}_p} ( \breve{\Lambda}_0 /   ( \breve{\Lambda}_0 \cap L_0  )  ) =k-1,
 $
proving that $L_0 \in \mathrm{RZ}^k_\Lambda( \breve{\F}_p)$.
\end{proof}


\section{Enhancing the moduli problem}
\label{s:enhanced}


We continue to study the partial Rapoport-Zink space $\mathrm{RZ}_\Lambda$ associated to a self-dual hermitian $\co_E$-lattice $\Lambda$ of rank $n\ge 2$, and the  locally closed subset
$
\mathrm{RZ}^k_\Lambda \subset \mathrm{RZ}_\Lambda
$
 determined by  a fixed integer $1\le k\le \lfloor n/2 \rfloor$.
Our goal is to construct a morphism from $\mathrm{RZ}^k_\Lambda$  to a Deligne-Lustzig variety.
The  fibers of this morphism will then be studied in \S \ref{s:fibration}.


\subsection{New moduli spaces}
\label{ss:new moduli}


In this subsection  we  will construct a commutative diagram 
\begin{equation}\label{augment diagram}
\xymatrix{
{ \mathrm{RZ}^ k_\Lambda }    \ar[d] & { \mathbf{RZ}^k_\Lambda } \ar[d]  \ar[l]_{\pi_0} \ar[r]^{\pi_1}   &  {  Y^k_\Lambda }  \ar@{=}[d] \\
{ \mathrm{RZ}^{\le k}_\Lambda }    & { \mathbf{RZ}^{\le k}_\Lambda } \ar[l]_{\pi_0} \ar[r]^{\pi_1}   &  {  Y^k_\Lambda } 
}
\end{equation}
of  $\breve{\F}_p$-schemes, in which the schemes in the bottom row are projective,  the vertical arrows are open immersions, and the vertical arrow on the left is that of Definition \ref{def:RZk}.
The definitions are somewhat elaborate;   they are concocted so that  the $\pi_0$ in the top row induces an isomorphism of underlying reduced schemes (Proposition \ref{prop:pi_0 isomorphism}),  while the reduced scheme underlying $Y_\Lambda^k$ is isomorphic to a Deligne-Lusztig variety (Theorem \ref{thm:YtoDL})

Define $\widetilde{Y}_\Lambda^{k}$ to be the $\breve{\F}_p$-scheme whose $S$-valued points are  commutative diagrams
\begin{equation}\label{Y tilde functor}
\xymatrix{
{p\Lambda \otimes \overline{\mathbb{Y}}_S  }\ar[r]^a    \ar@/_/[drr]_i  &  { H }  \ar[dr]^b  \ar[r]|{\, d\, }  &  {G}  \ar[r] | {\, d^\vee\, } & {H^\vee}  \ar[r]^{a^\vee}  & {p^{-1}\Lambda \otimes  \overline{\mathbb{Y}}_S  } \\
& &  { \Lambda \otimes  \overline{\mathbb{Y} }_S } \ar[ur]^{b^\vee} \ar@/_/[urr]_{i^\vee}
}
\end{equation}
in which $H$ and $G$ are $p$-divisible groups  over $S$  equipped  with  $\co_E$-actions, 
$H$ has  signature $(1,n-1)$, $G$ has signature $(2k, n-2k)$ and is equipped with an $\co_E$-linear principal polarization, 
the  isogenies   $a$, $b$, and $d$  are $\co_E$-linear  of heights  $\mathrm{ht}(a)=2n-2k+1$ and 
$
\mathrm{ht}(b) = \mathrm{ht}(d) =2k-1,
$
and 
\begin{equation}\label{Hpol}
\ker(  d^\vee \circ d : H \to H^\vee ) \subset H[p].
\end{equation}

Define $\widetilde{\mathbf{RZ}}^{\le k}_\Lambda$ to be the $\breve{\F}_p$-scheme whose $S$-valued points consist of a point \eqref{complete point diagram}  of $\mathrm{RZ}^{\le k}_\Lambda(S)$, a point \eqref{Y tilde functor} of $\widetilde{Y}^k_\Lambda(S)$, and an $\co_E$-linear isogeny 
$
c : H\to X
$
making the diagram
\begin{equation}\label{RZ tilde functor}
\xymatrix{
& &   {  X  } \ar[dr]_{c^\vee}  \ar@/^/[drr]^{j^\vee} \\
{p\Lambda \otimes \overline{\mathbb{Y}}_S  }\ar[r]^a  \ar@/^/[urr]^j   \ar@/_/[drr]_i  &  { H }  \ar[ur]_c \ar[dr]^b  \ar[r]|{\, d\, }  &  {G}  \ar[r] | {\, d^\vee\, } & {H^\vee}  \ar[r]^{a^\vee}  & {p^{-1}\Lambda \otimes  \overline{\mathbb{Y}}_S  } \\
& &  { \Lambda \otimes  \overline{\mathbb{Y} }_S } \ar[ur]^{b^\vee} \ar@/_/[urr]_{i^\vee}
}
\end{equation}
commute.

The moduli spaces we have constructed are  related to $\mathrm{RZ}_\Lambda$ by morphisms
\[
\xymatrix{
{ \mathrm{RZ}_\Lambda^{\le k} }  & { \widetilde{\mathbf{RZ}}_\Lambda^{\le k} } \ar[l]_{\pi_0} \ar[r]^{\pi_1} & {  \widetilde{Y}^k_\Lambda } ,
}
\]
where $\pi_0$ extracts from \eqref{RZ tilde functor} the diagram \eqref{complete point diagram}, while 
$\pi_1$ extracts  \eqref{Y tilde functor}.
To define the bottom row of \eqref{augment diagram}, we single out certain open and closed subschemes of the these moduli spaces.  This will require the following general lemma concerning the vector bundles of \eqref{GMbundles}.

\begin{lemma}\label{lem:free crystals}
Suppose $\phi : G \to H$ is an isogeny of $p$-divisible groups over a scheme $S$ satisfying $p\co_S=0$.
If the kernel of $\phi$ is annihilated by $p$, then the  cokernel of 
\[
\phi : \mathscr{D}(G) \to \mathscr{D}(H) 
\]
is a locally free $\co_S$-module of rank $\mathrm{ht}(\phi)$.
In particular,  its kernel and image are local direct summands of $\mathscr{D}(G)$ and $\mathscr{D}(H)$, respectively.
\end{lemma}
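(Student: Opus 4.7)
The plan is to reduce to a fiber-wise computation using the classical covariant Dieudonn\'e equivalence. Since $\ker(\phi)\subset G[p]$, the multiplication $[p]_G$ kills $\ker(\phi)$ and hence factors as $\psi\circ\phi$ for a unique isogeny $\psi : H \to G$; a symmetric argument (using that $\phi$ is an epimorphism) gives $\phi\circ\psi = [p]_H$. Applying the covariant Grothendieck-Messing functor $\mathscr{D}$ and using that $p\co_S = 0$, both compositions $\psi\circ\phi$ on $\mathscr{D}(G)$ and $\phi\circ\psi$ on $\mathscr{D}(H)$ vanish.

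Next, I would show that the finitely presented $\co_S$-module $\mathrm{coker}(\phi : \mathscr{D}(G)\to\mathscr{D}(H))$ has fiber of dimension $\mathrm{ht}(\phi)$ at every point $s\in S$. Since formation of cokernel commutes with base change, and $\mathscr{D}$ is compatible with base change to a perfect residue field, the question reduces to a computation over the perfect closure of $k(s)$. There the classical covariant Dieudonn\'e equivalence makes $\phi$ an injection of free modules $D(G_s)\hookrightarrow D(H_s)$ of common rank $\mathrm{ht}(G)$, with cokernel of $\Z_p$-length $\mathrm{ht}(\phi)$. The crucial point is that the identity $\psi\circ\phi = [p]_G$ forces $pD(H_s)\subset\phi(D(G_s))$, so the integral cokernel is already annihilated by $p$ and therefore coincides with $\mathrm{coker}(\phi : \mathscr{D}(G)_s \to \mathscr{D}(H)_s)$, which thus has $k(s)$-dimension $\mathrm{ht}(\phi)$.

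Finally, a finitely presented module with locally constant fiber rank on a locally Noetherian scheme is locally free of that rank, so $\mathrm{coker}(\phi)$ is locally free of rank $\mathrm{ht}(\phi)$. The short exact sequence
\[
0 \to \mathrm{image}(\phi) \to \mathscr{D}(H) \to \mathrm{coker}(\phi) \to 0
\]
then splits locally, realizing $\mathrm{image}(\phi)$ as a local direct summand of $\mathscr{D}(H)$ (and in particular locally free); a second local splitting of $0 \to \ker(\phi) \to \mathscr{D}(G) \to \mathrm{image}(\phi) \to 0$ likewise shows $\ker(\phi)$ is a local direct summand of $\mathscr{D}(G)$. The main technical step is the fiber-wise Dieudonn\'e computation, whose essence is the inclusion $pD(H_s)\subset\phi(D(G_s))$ that directly encodes the hypothesis that $\ker(\phi)$ is annihilated by $p$.
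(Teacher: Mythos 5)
The paper's own proof of this lemma is a one-line citation to Proposition~4.6 of Vollaard--Wedhorn, so your self-contained sketch is a genuinely different route. It is largely on track, but it has one real gap at the end.

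The flawed step is the sentence ``a finitely presented module with locally constant fiber rank on a locally Noetherian scheme is locally free of that rank.'' This is false without a reducedness hypothesis on $S$, and the lemma must hold for non-reduced $S$ (it is applied in the paper to $\widetilde{Y}^k_\Lambda$, which is not assumed reduced). The standard counterexample: over $A=\breve{\F}_p[\epsilon]/(\epsilon^2)$, the module $A/\epsilon A$ has constant fiber dimension $1$ but is not free. So constancy of fiber ranks alone does not give local freeness, and your argument does not yet establish the lemma.

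The good news is that everything you need is already on the table; you are just not using it. You have produced a second map $\psi:\mathscr{D}(H)\to\mathscr{D}(G)$ with $\psi\circ\phi=0$ and $\phi\circ\psi=0$ (because $p\co_S=0$), and your fiberwise Dieudonn\'e computation shows that at every point $s$ the ranks of $\phi_s$ and $\psi_s$ are complementary, i.e.\ $\mathrm{rank}(\phi_s)+\mathrm{rank}(\psi_s)=\mathrm{rank}\,\mathscr{D}(H)_s=\mathrm{rank}\,\mathscr{D}(G)_s$. These facts together \emph{do} force $\mathrm{coker}(\phi)$ to be locally free, by the following local linear-algebra argument: work over a local ring, put $\phi$ in the form $\begin{pmatrix} I_r & 0\\ 0 & d\end{pmatrix}$ with $d\equiv 0$ modulo the maximal ideal (possible since the top-left $r\times r$ minor is a unit); the relation $\psi\phi=0$ forces $\psi=\begin{pmatrix}0 & *\\ 0 & s\end{pmatrix}$ and then $\phi\psi=0$ forces $\psi=\begin{pmatrix}0&0\\0&s\end{pmatrix}$ with $ds=0$; rank complementarity makes $s$ invertible, whence $d=0$ and $\mathrm{coker}(\phi)$ is visibly free. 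Note that this uses \emph{both} vanishings $\psi\phi=0$ and $\phi\psi=0$, together with the rank complementarity — not the constancy of the rank of $\phi$ alone, which is what your final sentence appeals to. If you replace that sentence with this argument (or simply cite Proposition~4.6 of Vollaard--Wedhorn, as the paper does), the proof is complete.
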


\begin{proof}
This is well-known.  See  Proposition 4.6 of \cite{VollaardWedhorn}, for example.
\end{proof}

Applying the functor $\mathscr{D}$  to the universal diagram over $\widetilde{Y}^k_\Lambda$  yields a diagram of vector bundles
\[
\xymatrix{
{p  \breve{\Lambda}  \otimes \mathscr{D}(\overline{\mathbb{Y}})   }\ar[r]^a    \ar@/_/[drr]_{i =0 }  &  {\mathscr{D}(H)} \ar[dr]^b  \ar[r]|{\, d\, }   &  
{\mathscr{D}(G) } \ar[r]|{\, d^\vee \, }  & {\mathscr{ D }(H^\vee)  }  \ar[r]^{a^\vee}  & {p^{-1}   \breve{\Lambda}    \otimes \mathscr{D}(\overline{\mathbb{Y}})     } \\
& &  {   \breve{\Lambda}   \otimes \mathscr{D}(\overline{\mathbb{Y}})    } \ar[ur]^{b^\vee} \ar@/_/[urr]_{i^\vee=0},
}
\]
where the tensor products are over $\co_{\breve{E}}$.  Define an open and closed subscheme  $Y^k_\Lambda \subset \widetilde{Y}^k_\Lambda$  by imposing the conditions
\begin{align}
\mathrm{rank}(  b  : \mathscr{D}(H)_{0} \to 
  \breve{\Lambda}_0   \otimes \mathscr{D}(\overline{\mathbb{Y}})_0   )  & =  n-k+1   \label{Y invariants} \\
\mathrm{rank}(  b  : \mathscr{D}(H)_{1} \to  \breve{\Lambda}_1   \otimes \mathscr{D}(\overline{\mathbb{Y}}) _1 )   & =  n-k  \nonumber \\
\mathrm{rank}(  d  : \mathscr{D}(H)_{0} \to  \mathscr{D}(G)_{0}  )  & = n-2k+1  \nonumber \\
\mathrm{rank}( d^\vee  : \mathscr{D}(G)_{1} \to \mathscr{D}(H^\vee)_{1}  ) & = n-2k+1  \nonumber ,
\end{align}
where \emph{rank} means the rank of the image as a vector bundle.  
Note we are using Lemma \ref{lem:free crystals}: the image of every arrow in the diagram is   a locally free sheaf, so the ranks here are  well-defined  locally constant functions on $\widetilde{Y}^k_\Lambda$.

Similarly, over $\widetilde{\mathbf{RZ}}^{\le k}_\Lambda$ there is a diagram of vector bundles
\[
\xymatrix{
& &   {  \mathscr{D}(X) } \ar[dr]_{c^\vee}  \ar@/^/[drr]^{j^\vee} \\
{p  \breve{\Lambda}  \otimes \mathscr{D}(\overline{\mathbb{Y}})   }\ar[r]^a  \ar@/^/[urr]^{j}   \ar@/_/[drr]_{i =0 }  &  {\mathscr{D}(H)} \ar[ur]_c \ar[dr]^b  \ar[r]|{\, d\, }   &  
{\mathscr{D}(G) } \ar[r]|{\, d^\vee \, }  & {\mathscr{ D }(H^\vee)  }  \ar[r]^{a^\vee}  & {p^{-1}   \breve{\Lambda}    \otimes \mathscr{D}(\overline{\mathbb{Y}})     } \\
& &  {   \breve{\Lambda}   \otimes \mathscr{D}(\overline{\mathbb{Y}})    } \ar[ur]^{b^\vee} \ar@/_/[urr]_{i^\vee=0},
}
\]
and we denote by $\mathbf{RZ}^{\le k}_\Lambda \subset \widetilde{\mathbf{RZ}}^{\le k}_\Lambda$  the open and closed subscheme defined by imposing the conditions  \eqref{Y invariants} as well as
\begin{align}
\mathrm{rank}(c  : \mathscr{D}(H)_{0} \to  \mathscr{D}(X)_{0}  )  & = n-k  \label{RZ invariants} \\
\mathrm{rank}(  c  : \mathscr{D}(H)_{1} \to \mathscr{D}(X)_{1}  ) & = n-k+1  \nonumber .
\end{align}

The above definitions complete the construction of the bottom row of  \eqref{augment diagram}.
All three of the schemes appearing there are projective, by the same reasoning as in Remark \ref{rem:projectivity}.  
To complete the construction of  the top row,  define 
 \[
 \mathbf{RZ}_\Lambda^k \subset  \mathbf{RZ}_\Lambda^{\le k}
  \]
 as the preimage   under $\pi_0 :  \mathbf{RZ}_\Lambda^{\le k}  \to  \mathrm{RZ}_\Lambda^{\le k}$
of the open subscheme  $\mathrm{RZ}_\Lambda^k  =  \mathrm{RZ}^{\le k}_\Lambda \smallsetminus  \mathrm{RZ}^{\le k-1}_\Lambda$  constructed in   \S \ref{ss:hyperspecial strata}.


\subsection{Description of the closed points}


We now provide a description of the closed points of the schemes in \eqref{augment diagram}, 
analogous the description of $\mathrm{RZ}^k_\Lambda(\breve{\F}_p)$ from Proposition  \ref{prop:strata points}.

\begin{proposition}\label{prop:RZ tilde points}
There is a   bijection from $Y^k_\Lambda(\breve{\F}_p)$ to the set of pairs of $\breve{\Z}_p$-lattices $(M_0,N_0)$ in $\breve{\Lambda}_0[1/p]$ 
such that
\[
 p\breve{\Lambda}_0 \stackrel{k-1}{\subset}  p M_0^* \stackrel{1}{\subset}  pN_0 \stackrel{n-2k}{\subset}  N_0^* \stackrel{1}{\subset}  M_0 \stackrel{k-1}{\subset} \breve{\Lambda}_0  .
 \]
Here  $M_0^*$ and $N_0^*$ are the right duals, as in \eqref{right dual}, of $M_0$ and $N_0$ with respect to the pairing \eqref{twisted pairing}.
 Under this bijection:
 \begin{enumerate}
 \item
 A point of $Y_\Lambda^k(\breve{\F}_p)$, represented by a diagram \eqref{Y tilde functor}, corresponds to 
 \begin{align*}
 M_0^* &= VD(H^\vee)_1  & N_0^*&= VD(H)_1   \\
   M_0 &= D(H)_0   &     N_0 &= D(H^\vee)_0
 \end{align*}
 all viewed as lattices in $\breve{\Lambda}_0[1/p]$ as in \eqref{Ldef}.
 
 \item
The points of $\mathbf{RZ}^{\le k}_\Lambda(\breve{\F}_p)$ above a given pair $(M_0,N_0)$ are in bijection with $\breve{\Z}_p$-lattices $ L_0 \subset \breve{\Lambda}_0[1/p]$ satisfying
 \[
 L_0^* \stackrel{2}{\subset} L_0\quad \mbox{and}\quad   M_0 \stackrel{k}{\subset} L_0 \stackrel{k-1}{\subset} N_0. 
 \]
 \item
 The points of $\mathbf{RZ}^k_\Lambda(\breve{\F}_p)$ above a given pair $(M_0,N_0)$ are in bijection with $\breve{\Z}_p$-lattices $ L_0 \subset \breve{\Lambda}_0[1/p]$ satisfying the conditions of (2) and, in the notation of  Proposition  \ref{prop:strata points},  $\inv(L_0 , \breve{\Lambda}_0) = \lambda_k$.
 \end{enumerate}
\end{proposition}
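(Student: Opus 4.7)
The proof applies covariant Dieudonné theory to the diagram \eqref{Y tilde functor}, paralleling Proposition \ref{prop:hyperspecial points} and Corollary \ref{cor:half hyperspecial}. After fixing the trivialization \eqref{contraction} of $D(\overline{\mathbb{Y}})$, every $\co_E$-linear $p$-divisible group appearing there is realized as a lattice in $\breve{\Lambda}[1/p]$, and every $\co_E$-linear isogeny becomes a lattice inclusion. For part (1), I would set $M=D(H)$ and $N=D(H^\vee)$; the isogenies give $p\breve{\Lambda} \subset M \subset \breve{\Lambda} \subset N \subset p^{-1}\breve{\Lambda}$, and the self-$h$-duality of $\breve{\Lambda}$ coming from the principal polarization on $\Lambda\otimes\overline{\mathbb{Y}}$ identifies $N$ with the $h$-dual of $M$. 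Restricting to the $0$-part and using $\Phi^{-1}M_1 = N_0^*$ and $\Phi^{-1}N_1 = M_0^*$ (both consequences of $N = M^{*_h}$ combined with the description of $\Phi$ in terms of $V$ on $\breve{\Lambda}$) yields $N_0^* \subset M_0$ and $M_0^* \subset N_0$.

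The individual colengths come from the signatures. Signature $(1,n-1)$ on $H$ gives $\dim M_0/\Phi^{-1}M_1 = 1$, i.e.\ $N_0^* \stackrel{1}{\subset} M_0$. Signature $(2k, n-2k)$ on $G$, together with the forced identification $D(G) = N_0 \oplus M_1$ (see below), produces the middle piece $pN_0 \stackrel{n-2k}{\subset} N_0^*$. The rank conditions \eqref{Y invariants} in turn force each inclusion to be minuscule: the fiber of $b:\mathscr{D}(H)_0 \to \breve{\Lambda}_0\otimes\mathscr{D}(\overline{\mathbb{Y}})_0$ is identified with $M_0/pM_0 \to \breve{\Lambda}_0/p\breve{\Lambda}_0$, whose rank equals the number of zero entries in $\inv(M_0,\breve{\Lambda}_0)$, so prescribing this rank as $n-k+1$ with the known colength forces $\inv(M_0,\breve{\Lambda}_0)=(1^{k-1},0^{n-k+1})$, and analogously for the other inclusions. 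The same analysis applied to the rank conditions on $d$ and $d^\vee$ shows $D(G)_0 = N_0$ and $D(G)_1 = M_1$, so $G$ is determined by the pair $(M_0, N_0)$. In the reverse direction, any pair $(M_0,N_0)$ satisfying the stated chain assembles via $M = M_0 \oplus \Phi N_0^*$ (and similarly for $N$ and $D(G)$) into compatible Dieudonné modules, with $F$- and $V$-stability following from the chain relations.

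For part (2), the additional datum of $X$ and $c: H \to X$ provides an intermediate lattice $L = D(X)$ with $M \subset L \subset N$. The signature $(2,n-2)$ on $X$ gives $L_0^* \stackrel{2}{\subset} L_0$ via \eqref{dual shift}, and the rank conditions \eqref{RZ invariants} together with the heights of $c$ and $c^\vee$ force $M_0 \stackrel{k}{\subset} L_0 \stackrel{k-1}{\subset} N_0$ with minuscule relative positions. Conversely, any such $L_0$ determines $L = L_0 \oplus \Phi L_0^*$, recovering $X$ and the isogeny $c$. Part (3) then follows immediately from (2) combined with Proposition \ref{prop:strata points}, since the open condition $L_0 \in \mathrm{RZ}^k_\Lambda(\breve{\F}_p)$ translates exactly to $\inv(L_0, \breve{\Lambda}_0)=\lambda_k$.

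The main obstacle is bookkeeping: aligning the conjugate $\co_E$-action convention \eqref{conjugate convention} on $H^\vee$ with the $0$-$1$ index decomposition, and consistently tracking the interaction of $h$-duality on $\breve{\Lambda}$, the $b$-duality $*$ on $\breve{\Lambda}_0[1/p]$, and the $\Phi$-twists. Once these are pinned down, each length equality in the chain reduces to a straightforward colength computation, and each minusculeness assertion reduces to checking that a map of $\breve{\F}_p$-vector spaces has the prescribed rank.
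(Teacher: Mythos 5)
Your proposal is correct and follows essentially the same route as the paper's proof: fix the trivialization \eqref{contraction}, convert the diagram of $p$-divisible groups into a diagram of lattices in $\breve{\Lambda}[1/p]$, use the polarizations and \eqref{dual shift}/\eqref{fake dual shift} to relate the index-$0$ and index-$1$ parts via $*$ and $\Phi$, read the colength-$1$ and colength-$(n-2k)$ steps off the signature conditions on $H$, $H^\vee$, $G$, and use the rank conditions \eqref{Y invariants} and \eqref{RZ invariants} to pin down the remaining colengths. Your inline observation that the rank conditions on $d$ and $d^\vee$ force $D(G)_0 = N_0$ and $D(G)_1 = M_1$ is exactly the paper's Lemma \ref{lem:intermediate crystal}, stated as a separate lemma there because it is reused in later arguments.
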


\begin{proof}
An element of  $Y^k_\Lambda(\breve{\F}_p)$, corresponding to a diagram \eqref{Y tilde functor} of $p$-divisible groups over $\breve{\F}_p$ satisfying \eqref{Y invariants},  determines inclusions of Dieudonn\'e modules 
\[
\xymatrix{
{p \breve{\Lambda} \otimes  D( \overline{\mathbb{Y}})  }\ar[r]     \ar@/_/[drr]_{2n}  &  {  D(H)  }  \ar[dr]^{2k-1}  \ar[r]^{ 2k-1 }&  {D(G)}  \ar[r]^{2k-1  } & { D(H^\vee)}  \ar[r]  & {p^{-1} \breve{\Lambda} \otimes  D(\overline{\mathbb{Y}})  } \\
& &  { \breve{\Lambda} \otimes  D(\overline{\mathbb{Y} }) } \ar[ur]^{2k-1} \ar@/_/[urr]_{2n}
}
\]
of the indicated colengths, and with $pD(H^\vee) \subset D(H)$.   Here all tensor products are over $\co_{\breve{E}}$.

The following lemma shows that, at least on the level of $\breve{\F}_p$-points, the $p$-divisible group  $G$ in the moduli problem defining $Y_\Lambda^k$ can be recovered from $H$ and $H^\vee$.  
In some sense $G$ plays only an auxiliary role, imposing constraints on the polarization $H\to H^\vee$.

\begin{lemma}\label{lem:intermediate crystal}
We have
$
D(G) = D(H^\vee)_0 \oplus D(H)_1. 
$
\end{lemma}

\begin{proof}
The final two conditions in \eqref{Y invariants} imply that the inclusions 
\[
D(H)_0  \subset D(G)_0 \quad \mbox{and}\quad D(G)_1\subset D(H^\vee)_1
\]
 each have colength $2k-1$.  Comparing with the colengths in the diagram above, we deduce that the inclusions 
 \[
 D(H)_1  \subset D(G)_1 \quad \mbox{and}\quad D(G)_0\subset D(H^\vee)_0
 \]
  are equalities.
\end{proof}

Using \eqref{contraction}, we view the Dieudonn\'e modules in the diagram above as $\co_{\breve{E}}$-lattices 
\begin{equation}\label{fake minuscule lattices}
\xymatrix{
{p \breve{\Lambda}   }\ar[r]     \ar@/_/[drr]  &  {M} \ar[dr]   \ar[r] & { N_0 \oplus M_1 }  \ar[r] & { N }  \ar[r]  & {p^{-1}  \breve{\Lambda }  } \\
& &  {  \breve{\Lambda} } \ar[ur] \ar@/_/[urr]
}
\end{equation}
in the hermitian space $ \breve{\Lambda}[1/p]$,  with $M$ and $N$  dual to one another.
As in  \eqref{canonical L}, one can characterize these lattices, without fixing \eqref{contraction}, by 
\begin{equation}\label{MNlattices}
M  = \Hom_{\co_{\breve{E}}} (D(\overline{\mathbb{Y}}) , D(H) ) 
\quad \mbox{and}\quad 
N  = \Hom_{\co_{\breve{E}}} (D(\overline{\mathbb{Y}}) , D(H^\vee) ) 
\end{equation}
viewed as lattices  in
\[
 \Hom_{\co_{\breve{E}}} (D(\overline{\mathbb{Y}}) ,    \Lambda \otimes  D(\overline{\mathbb{Y} })   )[1/p]  
\iso  \breve{\Lambda}[1/p]  .
\]

As $M$ and $N$ are dual to one another, each of   $M_0\oplus N_1$ and $N_0\oplus M_1$ is self-dual under the hermitian form on $\breve{\Lambda}[1/p]$.  Hence, by \eqref{dual shift},
\begin{equation}\label{fake dual shift}
\Phi^{-1} N_1 = M_0^* \quad  \mbox{and} \quad  \Phi^{-1} M_1 =N_0^*.
\end{equation}
 The signature $(1,n-1)$ conditions on $H$ and $H^\vee$ imply 
 \[
 VD(H)_1 \stackrel{1}{\subset}   D(H)_0   \quad  \mbox{and}\quad  VD(H^\vee)_1 \stackrel{1}{\subset}   D(H^\vee)_0 .
\]
Using \eqref{contraction diagrams}, these translate to 
\[
N_0^* \stackrel{\eqref{fake dual shift}}{=} \Phi^{-1} M_1 \stackrel{1}{\subset} M_0 
\quad\mbox{and}\quad  
M_0^*  \stackrel{\eqref{fake dual shift}}{=}   \Phi^{-1} N_1 \stackrel{1}{\subset} N_0.
\]
Similarly, the signature $(2k,n-2k)$ condition on $G$ implies 
\[
p  D(H^\vee)_0  =    p D(G)_0 
  \stackrel{n-2k}{\subset} 
  VD(G)_1 =   V D(H)_1,
\]
(the outer equalities are by Lemma \ref{lem:intermediate crystal}), which  translates to 
\[
pN_0 \stackrel{n-2k}{\subset}   \Phi^{-1} M_1 =N_0^*.
\]
The first  condition in \eqref{Y invariants} implies
$
  D(H)_0 \subset  \breve{\Lambda}_0   \otimes D(\overline{\mathbb{Y}})_0   ,
$
with colength $k-1$, 
which translates to $M_0 \subset \breve{\Lambda}_0$ with colength $k-1$, and  dualizing shows 
\[
\breve{\Lambda}_0 = \Phi^{-1} \breve{\Lambda}_1 
\stackrel{\eqref{dual shift}}{=} 
 \breve{\Lambda}_0^* \stackrel{k-1}{\subset} M_0^* .
\]
All of this shows that the pair $(M_0,N_0)$ satisfies the chain of inclusions in the statement of the proposition.

This process can be reversed.  Starting from a pair $(M_0,N_0)$ one uses \eqref{fake dual shift} to define $M_1$ and $N_1$, thereby  obtaining a diagram \eqref{fake minuscule lattices} of lattices in $\breve{\Lambda}[1/p]$.  
Converting these to lattices in $D(\mathbb{X})[1/p]$ using 
 \eqref{contraction}, one finds inclusions of Dieudonn\'e modules whose corresponding $p$-divisible groups define a point of  $Y^k_\Lambda(\breve{\F}_p)$.

The analysis of $\mathbf{RZ}^{\le k}_\Lambda$ is entirely similar.
A point  of $\mathbf{RZ}_\Lambda^{\le k}(\breve{\F}_p)$, corresponding to a diagram of $p$-divisible groups \eqref{RZ tilde functor},   determines inclusions of Dieudonn\'e modules 
\[
\xymatrix{
& &   {  D(X)  } \ar[dr]^{2k-1}  \ar@/^/[drr]^{2n} \\
{p\Lambda \otimes D(\overline{\mathbb{Y}})  }\ar[r]  \ar@/^/[urr]^{2n}   \ar@/_/[drr]_{2n}  &  { D(H) }  \ar[ur]^{2k-1} \ar[dr]^{2k-1}  \ar[r]^{ 2k -1 }  &  {D(G)}  \ar[r]^ { 2k-1} & {D( H^\vee) }  \ar[r]   & {p^{-1}\Lambda \otimes  D(\overline{\mathbb{Y}})  } \\
& &  { \Lambda \otimes  D(\overline{\mathbb{Y} }) , } \ar[ur]^{2k-1} \ar@/_/[urr]_{2n} 
}
\]
and we use \eqref{contraction}  to convert these  into $\co_{\breve{E}}$-lattices
\begin{equation}\label{fake minuscule lattices 2}
\xymatrix{
& &   {  L } \ar[dr]  \ar@/^/[drr] \\
{p \breve{\Lambda}   }\ar[r]  \ar@/^/[urr]   \ar@/_/[drr]  &  {M} \ar[ur] \ar[dr]   \ar[r] & { N_0 \oplus M_1 }  \ar[r] & { N }  \ar[r]  & {p^{-1}  \breve{\Lambda }  } \\
& &  {  \breve{\Lambda} .} \ar[ur] \ar@/_/[urr]
}
\end{equation}
The first condition in  \eqref{RZ invariants} implies $D(H)_0 \subset D(X)_0$ with colength $k$,  which translates to 
$M_0\subset L_0$ with colength $k$.  As the inclusion $L_0 \subset N_0$ is obvious, and $L_0^* \subset L_0$ with colength $2$ by \eqref{better half hyperspecial}, the triple $(L_0,M_0,N_0)$ satisfies the properties stated in (2).

The description of $\mathbf{RZ}^k_\Lambda$ in part (3) follows immediately from Proposition  \ref{prop:strata points} and the description of $\mathbf{RZ}^{\le k}_\Lambda$, completing the proof of Proposition \ref{prop:RZ tilde points}.
\end{proof}

\begin{corollary} \label{cor:augment bijection} 
The map $\pi_0 :  \mathbf{RZ}_\Lambda^k  \to  \mathrm{RZ}_\Lambda^k$ is bijective  on $\breve{\F}_p$-points.  
\end{corollary}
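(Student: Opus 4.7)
The plan is to leverage Proposition \ref{prop:RZ tilde points}(3), which identifies the fiber of $\pi_0$ over a point $L_0 \in \mathrm{RZ}_\Lambda^k(\breve{\F}_p)$ with the set of pairs $(M_0, N_0)$ of $\breve{\Z}_p$-lattices in $\breve{\Lambda}_0[1/p]$ satisfying the chain of inclusions
\[
 p\breve{\Lambda}_0 \stackrel{k-1}{\subset}  p M_0^* \stackrel{1}{\subset}  pN_0 \stackrel{n-2k}{\subset}  N_0^* \stackrel{1}{\subset}  M_0 \stackrel{k-1}{\subset} \breve{\Lambda}_0
 \]
together with $M_0 \stackrel{k}{\subset} L_0 \stackrel{k-1}{\subset} N_0$. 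The goal is to show this fiber has exactly one element, and my claim is that it is always given by the natural pair $M_0 = L_0 \cap \breve{\Lambda}_0$ and $N_0 = L_0 + \breve{\Lambda}_0$.

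For uniqueness, I would observe that the inclusions $M_0 \subset \breve{\Lambda}_0$ and $M_0 \subset L_0$ force $M_0 \subset L_0 \cap \breve{\Lambda}_0$. Choosing a basis $x_1,\ldots,x_n$ of $\breve{\Lambda}_0$ adapted to the invariant $\inv(L_0, \breve{\Lambda}_0) = \lambda_k = (1^{k-1}, 0^{n-2k+2}, -1^k)$ supplied by Proposition \ref{prop:strata points}, a routine computation shows that $L_0 \cap \breve{\Lambda}_0$ has basis $p^{\max(a_i,0)} x_i$ and has colength exactly $\sum_{a_i < 0} |a_i| = k$ in $L_0$. Since $M_0$ has the same colength $k$ in $L_0$ and is contained in $L_0 \cap \breve{\Lambda}_0$, equality holds: $M_0 = L_0 \cap \breve{\Lambda}_0$. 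A parallel colength argument shows $N_0 = L_0 + \breve{\Lambda}_0$.

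For existence, I would define $M_0 := L_0 \cap \breve{\Lambda}_0$ and $N_0 := L_0 + \breve{\Lambda}_0$, and check that this pair satisfies all the conditions. The inclusions $M_0 \stackrel{k-1}{\subset} \breve{\Lambda}_0$ and $M_0 \stackrel{k}{\subset} L_0 \stackrel{k-1}{\subset} N_0$ follow from the adapted basis computation. The duality parts of the chain follow from the standard formulas
\[
 (L \cap L')^* = L^* + (L')^* \qquad \text{and} \qquad (L + L')^* = L^* \cap (L')^*
\]
valid for the right dual with respect to the $\sigma$-semi-linear pairing $b$, combined with $\breve{\Lambda}_0^* = \breve{\Lambda}_0$ and the hypothesis $L_0^* \stackrel{2}{\subset} L_0$. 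For example, $N_0^* = L_0^* \cap \breve{\Lambda}_0 \subset L_0 \cap \breve{\Lambda}_0 = M_0$, giving $N_0^* \subset M_0$; dually, $M_0^* = L_0^* + \breve{\Lambda}_0 \subset L_0 + \breve{\Lambda}_0 = N_0$. The precise colengths in the chain then follow by bookkeeping with the adapted basis and the relation $L_0^{**} = \Phi^{-2} L_0$.

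The main obstacle is purely bookkeeping: verifying that all the colengths in the six-term chain come out correctly simultaneously. Since all the relevant lattices are built from $L_0$ and $\breve{\Lambda}_0$ via intersections, sums, and duality, no new geometric input is required beyond $\inv(L_0, \breve{\Lambda}_0) = \lambda_k$ and $L_0^* \stackrel{2}{\subset} L_0$, so the verification is a finite calculation.
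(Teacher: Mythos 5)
Your proposal is correct and follows essentially the same route as the paper: both identify the unique preimage of $L_0$ as $(M_0,N_0)=(L_0\cap\breve{\Lambda}_0,\,L_0+\breve{\Lambda}_0)$, establish uniqueness by a colength comparison, and verify existence by checking the chain conditions directly (which the paper dismisses as ``an exercise in linear algebra'' while you spell out the lattice-dual formulas). The only cosmetic difference is that you compare colengths inside $L_0$ whereas the paper compares them inside $\breve{\Lambda}_0$; both are valid.
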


\begin{proof}
Recalling the bijection
\begin{align*}
\mathrm{RZ}^k_\Lambda( \breve{\F}_p) 
 &  \iso
\left\{
\begin{array}{c}  \breve{\Z}_p\mbox{-lattices }   \\  L_0  \subset  \breve{\Lambda}_0[1/p]  \end{array} : 
\begin{array}{c}
\mathrm{inv}(  L_0^*,   L_0) = (1,1,0, \ldots,0)  \\
\inv(L_0 , \breve{\Lambda}_0) = \lambda_k
  \end{array}  \right\} .
\end{align*}
of Proposition \ref{prop:strata points},  fix a point $L_0 \in \mathrm{RZ}^k_\Lambda( \breve{\F}_p)$.
If $(L_0,M_0,N_0) \in \mathbf{RZ}_\Lambda^k(\breve{\F}_p)$ lies above it (using the identifications of Proposition \ref{prop:RZ tilde points}), then $M_0 \subset L_0 \cap \breve{\Lambda}_0$ and $L_0 + \breve{\Lambda}_0 \subset N_0$.    The first inclusion is an equality because both lattices have colength $k-1$ in $\breve{\Lambda}_0$.  The second inclusion is an equality because both lattices contain $\breve{\Lambda}_0$ with colength $k$.  In other words,
\begin{equation}\label{this MN}
M_0 = L_0 \cap \breve{\Lambda}_0 \quad \mbox{and}\quad N_0=L_0 + \breve{\Lambda}_0 ,
\end{equation}
showing that there is a unique point above $L_0$.  This proves the injectivity of the map in question.

For surjectivity  we again start with a point $L_0 \in  \mathrm{RZ}^k_\Lambda( \breve{\F}_p)$, and now define $M_0$ and $N_0$ by \eqref{this MN}. 
An exercise in linear algebra, using  Proposition \ref{prop:RZ tilde points}, shows that the triple $(L_0,M_0,N_0)$
 defines a point of $\mathbf{RZ}_\Lambda^k(\breve{\F}_p)$ above $L_0$.
\end{proof}

The proof of Corollary \ref{cor:augment bijection} provides a description of the inverse to 
\[
\pi_0 :  \mathbf{RZ}_\Lambda^k(\breve{\F}_p)   \to  \mathrm{RZ}_\Lambda^k(\breve{\F}_p) .
\]
  By continuing the line of reasoning a bit further, one can describe the inverse in the language  of the  original moduli problems defining the source and target.
  
Recall that for any point of $\mathbf{RZ}_\Lambda^k(\breve{\F}_p)$, 
represented by a diagram \eqref{RZ tilde functor}, the corresponding lattices  $M$ and $N$ in \eqref{fake minuscule lattices 2} are dual to one another under the hermitian form on $\breve{\Lambda}[1/p]$, while $L$ and $\breve{\Lambda}$ are each self-dual.  Thus dualizing  the second equality in \eqref{this MN} shows $M_1=L_1\cap \breve{\Lambda}_1$, while dualizing  the first shows $N_1=L_1+\breve{\Lambda}_1$.  We deduce that 
\[
M=L\cap \breve{\Lambda} \quad \mbox{and}\quad N=L+ \breve{\Lambda}.
\]

Recalling how \eqref{fake minuscule lattices 2} was constructed from the diagram of Dieudonn\'e modules above it, we find that 
the inverse to the above function $\pi_0$ sends a point $X \in \mathrm{RZ}_\Lambda^k(\breve{\F}_p)$, corresponding to a diagram \eqref{complete point diagram}, to the diagram \eqref{RZ tilde functor} determined by
\begin{align}\label{H intersection}
D(H) & = D(X) \cap  D( \Lambda \otimes \overline{\mathbb{Y}})   \\
D(H^\vee) & = D(X) +   D( \Lambda \otimes \overline{\mathbb{Y}}), \nonumber
\end{align}
and $D(G) = D(H^\vee)_0\oplus D(H)_1$, as per Lemma \ref{lem:intermediate crystal}.


\subsection{Analysis of $\pi_0$}


Our goal in this subsection is to prove that the  arrow 
\[
\pi_0 :  \mathbf{RZ}_\Lambda^k  \to  \mathrm{RZ}_\Lambda^k
\]
in \eqref{augment diagram} is a closed immersion  inducing an isomorphism of underlying reduced schemes.
This will use the following general result of Grothendieck-Messing theory, in which 
 $\breve{\F}_p[\epsilon]$ denotes the usual ring of infinitesimals defined by $\epsilon^2=0$.

\begin{lemma}\label{lem:easy deformation}
Let  $f' : H_1' \to H_2'$  be a morphism of $p$-divisible groups over  $\breve{\F}_p[\epsilon]$, and denote by   $f:H_1 \to H_2$ its reduction  to $\breve{\F}_p$.
Assume that the induced morphism
\[
 f : \mathscr{D} (H_1) \to \mathscr{D} (H_2)
\]
of $\breve{\F}_p$-vector spaces satisfies
\begin{equation}\label{easy def kernel}
\mathrm{Fil}^0 \mathscr{D} (H_1) = \ker \left( \mathscr{D} (H_1) \map{f} 
 \frac{ \mathscr{D}(H_2) }{   \mathrm{Fil}^0 \mathscr{D}(H_2) }  \right) .
\end{equation}
If   $H_2'$ is isomorphic to the constant deformation of $H_2$, then   $H_1'$ is isomorphic to the constant deformation of $H_1$.
\end{lemma}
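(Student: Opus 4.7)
The plan is to apply Grothendieck-Messing deformation theory, which classifies lifts of a $p$-divisible group $H$ over $\breve{\F}_p$ to $\breve{\F}_p[\epsilon]$ by direct summands of $\mathscr{D}(H) \otimes_{\breve{\F}_p} \breve{\F}_p[\epsilon]$ reducing mod $\epsilon$ to $\mathrm{Fil}^0 \mathscr{D}(H) \subset \mathscr{D}(H)$. Because the divided power structure on $(\epsilon) \subset \breve{\F}_p[\epsilon]$ is trivial, the crystal evaluated on $\breve{\F}_p[\epsilon]$ is canonically $\mathscr{D}(H) \otimes_{\breve{\F}_p} \breve{\F}_p[\epsilon]$, and such a lift of the Hodge filtration is uniquely parametrized by an $\breve{\F}_p$-linear map
\[
\phi : \mathrm{Fil}^0 \mathscr{D}(H) \to \mathscr{D}(H)/\mathrm{Fil}^0 \mathscr{D}(H),
\]
with the constant deformation corresponding to $\phi = 0$. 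Write $\phi_i$ for the map classifying $H_i'$. The hypothesis that $H_2'$ is constant reads $\phi_2 = 0$, and the goal is to conclude $\phi_1 = 0$.

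By functoriality of the crystal, $\mathscr{D}(f') : \mathscr{D}(H_1') \to \mathscr{D}(H_2')$ is identified, under the canonical trivializations, with $f \otimes \mathrm{id}_{\breve{\F}_p[\epsilon]}$, and it must carry $\mathrm{Fil}^0 \mathscr{D}(H_1')$ into $\mathrm{Fil}^0 \mathscr{D}(H_2')$. Given any $x \in \mathrm{Fil}^0 \mathscr{D}(H_1)$, choose a representative $\tilde{x} \in \mathscr{D}(H_1)$ of $\phi_1(x)$, so that $x + \epsilon \tilde{x} \in \mathrm{Fil}^0 \mathscr{D}(H_1')$. Applying $f \otimes \mathrm{id}$ and using $\phi_2 = 0$ then gives
\[
f(x) + \epsilon f(\tilde{x}) \in \mathrm{Fil}^0 \mathscr{D}(H_2') = \mathrm{Fil}^0 \mathscr{D}(H_2) \otimes_{\breve{\F}_p} \breve{\F}_p[\epsilon],
\]
which forces $f(\tilde{x}) \in \mathrm{Fil}^0 \mathscr{D}(H_2)$, i.e.~$\tilde{x} \in f^{-1}(\mathrm{Fil}^0 \mathscr{D}(H_2))$.

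At this point the hypothesis \eqref{easy def kernel} enters decisively: it says precisely that $f^{-1}(\mathrm{Fil}^0 \mathscr{D}(H_2)) = \mathrm{Fil}^0 \mathscr{D}(H_1)$. Hence $\tilde{x} \in \mathrm{Fil}^0 \mathscr{D}(H_1)$, so $\phi_1(x) = 0$ in $\mathscr{D}(H_1)/\mathrm{Fil}^0 \mathscr{D}(H_1)$. Since $x$ was arbitrary, $\phi_1 = 0$, and Grothendieck-Messing then identifies $H_1'$ with the constant deformation of $H_1$. There is no serious obstacle: once the Grothendieck-Messing dictionary is in place, the argument is a short bookkeeping exercise, and the kernel hypothesis is exactly the condition needed to translate the vanishing of $\phi_2$ into that of $\phi_1$.
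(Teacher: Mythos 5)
Your proof is correct and uses the same underlying Grothendieck-Messing argument as the paper, just expressed through the explicit parametrization of Hodge-filtration lifts by maps $\phi : \mathrm{Fil}^0\mathscr{D}(H) \to \mathscr{D}(H)/\mathrm{Fil}^0\mathscr{D}(H)$ rather than via the canonical diagram comparing $\mathscr{D}(H_i')$ with $\mathscr{D}(H_i^\circ)$. The crux in both cases is identical: functoriality forces $f'$ to carry $\mathrm{Fil}^0\mathscr{D}(H_1')$ into $\mathrm{Fil}^0\mathscr{D}(H_2')$, and the kernel hypothesis converts the constancy of the target filtration into the constancy of the source filtration.
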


\begin{proof}
Let $f^\circ : H_1^\circ \to H_2^\circ$ be the constant deformation of $f:H_1 \to H_2$ to $\breve{\F}_p[\epsilon]$.
Grothendieck-Messing theory provides us with a  canonical commutative diagram
\[
\xymatrix{
{  \mathscr{D}(H_1') }     \ar[r]^{f'}     \ar[d]_\iso   & {    \mathscr{D}(H_2') }  \ar[d]^\iso    \\ 
  { \mathscr{D}(H_1^\circ) }   \ar[r]_{f^\circ}&   {  \mathscr{D}(H_2^\circ)   }      \\
}
\]
of $\breve{\F}_p[\epsilon]$-modules. 
 The vertical isomorphism on the right identifies the Hodge filtrations on source and target, as it is induced by an isomorphism of deformations $H_2^\circ \iso H_2'$, and 
hence there is an induced diagram
 \begin{equation}\label{easy def diagram}
\xymatrix{
{  \mathscr{D}(H_1') }     \ar[rr]^{f'}  \ar[d]_\iso  &  &  {    \mathscr{D}(H_2') / \mathrm{Fil}^0 \mathscr{D}(H'_2) }   \ar[d]^\iso     \\ 
  { \mathscr{D}(H_1^\circ) }   \ar[rr]_{f^\circ}    &  &   {  \mathscr{D}(H_2^\circ) / \mathrm{Fil}^0 \mathscr{D}(H^\circ_2)  } .    \\
}
\end{equation}

The top  horizontal arrow in \eqref{easy def diagram}, being induced by the morphism of $p$-divisible groups $f' : H_1' \to H_2'$,    satisfies
 \[
\mathrm{Fil}^0 \mathscr{D} (H'_1)  
\subset \ker \left(  \mathscr{D} (H'_1) \map{f'}  \frac{ \mathscr{D}(H'_2) } { \mathrm{Fil}^0 \mathscr{D}(H'_2) } \right) .
\]
On the other hand,  as $f^\circ : H_1^\circ \to H_2^\circ$ is the constant deformation, \eqref{easy def kernel} implies
\[
\mathrm{Fil}^0 \mathscr{D} (H^\circ_1)  
 =  \ker \left(  \mathscr{D} (H^\circ_1) \map{f^\circ}  \frac{ \mathscr{D}(H^\circ_2) }{  \mathrm{Fil}^0 \mathscr{D}(H^\circ_2) } \right) .
\]
It follows that the left vertical isomorphism in \eqref{easy def diagram} restricts to a map 
\[
\mathrm{Fil}^0 \mathscr{D} (H'_1) \to \mathrm{Fil}^0 \mathscr{D} (H^\circ_1), 
\]
which is an isomorphism because the Hodge filtrations are local direct summands of the same rank.
By  Grothendieck-Messing theory there is an isomorphism of deformations $H_1' \iso H_1^\circ$. 
\end{proof}

\begin{proposition}\label{prop:pi_0 isomorphism}
The map $\pi_0 :  \mathbf{RZ}_\Lambda^k  \to  \mathrm{RZ}_\Lambda^k$ is a closed immersion  inducing an isomorphism of underlying reduced schemes.
\end{proposition}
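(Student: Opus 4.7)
The plan is to verify three properties of $\pi_0$: properness, bijectivity on $\breve{\F}_p$-points, and formal unramifiedness at every closed point. A proper monomorphism of Noetherian schemes is a closed immersion, so these three together give the closed immersion, and the surjectivity coming from bijectivity then forces this closed immersion to be defined by a nilpotent ideal, producing the asserted isomorphism of underlying reduced schemes. Properness comes for free, as $\pi_0 : \mathbf{RZ}_\Lambda^{\le k} \to \mathrm{RZ}_\Lambda^{\le k}$ is a map of projective $\breve{\F}_p$-schemes and $\mathbf{RZ}_\Lambda^k$ is by construction the preimage of the open subscheme $\mathrm{RZ}_\Lambda^k$; bijectivity on $\breve{\F}_p$-points is Corollary \ref{cor:augment bijection}.

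The substantive step is formal unramifiedness. Fix $z_0 = (X_0, H_0, G_0, \ldots) \in \mathbf{RZ}_\Lambda^k(\breve{\F}_p)$ and suppose $z_1', z_2' \in \mathbf{RZ}_\Lambda^k(\breve{\F}_p[\epsilon])$ are two lifts of $z_0$ with common image $X' = \pi_0(z_i')$. Adapting the argument of Lemma \ref{lem:easy deformation} to two simultaneous constraining morphisms, Grothendieck-Messing theory presents each deformation $H_i'$ of $H_0$ as a lift $\widetilde{F}_{H,i} \subset \mathscr{D}(H_0) \otimes_{\breve{\F}_p} \breve{\F}_p[\epsilon]$ of $\Fil^0 \mathscr{D}(H_0)$. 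The existence of compatible lifts of the isogenies $c : H_0 \to X_0$ and $b : H_0 \to \Lambda \otimes \overline{\mathbb{Y}}$ imposes the two conditions
\[
c_\ast \bigl( \widetilde{F}_{H,i} \bigr) \subset \widetilde{F}_X , \qquad  b_\ast \bigl( \widetilde{F}_{H,i} \bigr) \subset \Fil^0 \mathscr{D}(\Lambda \otimes \overline{\mathbb{Y}}) \otimes \breve{\F}_p[\epsilon] ,
\]
where $\widetilde{F}_X$ is the Hodge filtration coming from $X'$ and the second target is the constant Hodge filtration, since $\Lambda \otimes \overline{\mathbb{Y}}$ admits only the trivial deformation. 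The difference $\widetilde{F}_{H,1} - \widetilde{F}_{H,2}$ is then an element of $\Hom \bigl( \Fil^0 \mathscr{D}(H_0), K \bigr)$ with
\[
K = \ker \bigl( (c,b)_\ast : \Lie(H_0) \to \Lie(X_0) \oplus \Lie(\Lambda \otimes \overline{\mathbb{Y}}) \bigr) ,
\]
so $H_1' = H_2'$ will follow once $K = 0$ is verified.

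I expect the hard part to be this vanishing of $K$. The proof of Corollary \ref{cor:augment bijection} supplied the critical identity $D(H_0) = D(X_0) \cap D(\Lambda \otimes \overline{\mathbb{Y}})$ inside the ambient isocrystal of the framing object. Since $V$ is injective there, the elementary identity $V(A \cap B) = V(A) \cap V(B)$ gives $V D(H_0) = V D(X_0) \cap V D(\Lambda \otimes \overline{\mathbb{Y}})$; as this intersection already lies inside $D(H_0)$, the preimage under the diagonal $(c,b)_\ast : D(H_0) \hookrightarrow D(X_0) \oplus D(\Lambda \otimes \overline{\mathbb{Y}})$ of $V D(X_0) \oplus V D(\Lambda \otimes \overline{\mathbb{Y}})$ is exactly $V D(H_0)$, i.e.\ $K = 0$. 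Once $H_1' = H_2'$ is established, the remaining data of $z_i'$ agree automatically: the isogenies $a'$ and $b'$ between constant $p$-divisible groups lift uniquely by rigidity of $\Hom$; the isogeny $c'$ is pinned down once $\widetilde{F}_H$ is; and the intermediate $p$-divisible group $G'$ is reconstructed from $H'$ and its dual via the deformation-theoretic analogue of Lemma \ref{lem:intermediate crystal}. The main obstacle is this Grothendieck-Messing bookkeeping itself, and in particular transporting the lattice identity $D(H_0) = D(X_0) \cap D(\Lambda \otimes \overline{\mathbb{Y}})$ into the vanishing of the combined Lie-algebra kernel.
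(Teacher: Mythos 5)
Your proposal is correct and takes essentially the same approach as the paper: reduce to injectivity of the tangent map (equivalently, formal unramifiedness), apply Lemma \ref{lem:easy deformation} to the product morphism $c \times b : H \to X \times (\Lambda \otimes \overline{\mathbb{Y}})$, and verify its kernel hypothesis from the lattice identity \eqref{H intersection}, using $V(A\cap B)=V(A)\cap V(B)$ exactly as you describe. One caveat about where the real work lies: you flag the vanishing of $K$ as the likely hard part, but it is actually immediate once \eqref{H intersection} is in hand; the step that occupies most of the paper's proof is showing that the deformation $\widetilde{G}$ is constant once $\widetilde{H}$ is, which you dispatch in a single clause ("reconstructed $\ldots$ via the deformation-theoretic analogue of Lemma \ref{lem:intermediate crystal}"). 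The paper's actual argument there is to show that the morphisms $\Fil^0\mathscr{D}(H)_0 \to \Fil^0\mathscr{D}(G)_0$ and, dually, $\Fil^0\mathscr{D}(G)_1 \to \Fil^0\mathscr{D}(H^\vee)_1$ are surjective (checked on fibers via Lemma \ref{lem:intermediate crystal}), so that the Hodge filtration of any deformation of $G$ compatible with $d$ and $d^\vee$ is pinned down by those of $H$ and $H^\vee$; this is the content that your clause is implicitly invoking, and it would need to be spelled out.
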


\begin{proof}
The key step is to show that the map in question is unramified.  For this, it suffices to show that the induced map on tangent spaces is injective.
Abbreviate 
\[
s = \Spec(\breve{\F}_p)  \quad\mbox{and}\quad  \tilde{s}= \Spec(\breve{\F}_p[\epsilon]).
\]
Given a point of $\mathbf{RZ}^k_\Lambda( \tilde{s} )$, represented by a diagram 
\[
\xymatrix{
& &   { \widetilde{X}  } \ar[dr]_{c^\vee}  \ar@/^/[drr]^{j^\vee} \\
{p\Lambda \otimes \overline{\mathbb{Y}}_{ \tilde{s}}  }\ar[r]^a  \ar@/^/[urr]^j   \ar@/_/[drr]_i  &  { \widetilde{H} }  \ar[ur]_c \ar[dr]^b  \ar[r]|{\, d\, }  &  {\widetilde{G}}  \ar[r] | {\, d^\vee\, } & {  \widetilde{H}^\vee }  \ar[r]^{a^\vee}  & {p^{-1}\Lambda \otimes  \overline{\mathbb{Y}}_{ \tilde{s}}  } \\
& &  { \Lambda \otimes  \overline{\mathbb{Y} }_{ \tilde{s}} } \ar[ur]^{b^\vee} \ar@/_/[urr]_{i^\vee}
}
\]
of $p$-divisible groups over $ \tilde{s}$,  and deforming  a diagram 
\[
\xymatrix{
& &   {  X  } \ar[dr]_{c^\vee}  \ar@/^/[drr]^{j^\vee} \\
{p\Lambda \otimes \overline{\mathbb{Y}}_s  }\ar[r]^a  \ar@/^/[urr]^j   \ar@/_/[drr]_i  &  { H }  \ar[ur]_c \ar[dr]^b  \ar[r]|{\, d\, }  &  {G}  \ar[r] | {\, d^\vee\, } & {  H^{ \vee} }  \ar[r]^{a^\vee}  & {p^{-1}\Lambda \otimes  \overline{\mathbb{Y}}_s  } \\
& &  { \Lambda \otimes  \overline{\mathbb{Y} }_s } \ar[ur]^{b^\vee} \ar@/_/[urr]_{i^\vee}
}
\]
of $p$-divisible groups over $S$.  We must show that if $\widetilde{X}$ is the constant deformation,  then so are $\widetilde{H}$ and $\widetilde{G}$.

The constancy of  $\widetilde{H}$ follows by applying Lemma \ref{lem:easy deformation}  to the morphism
\[
H \map{c\times b}  X  \times (\Lambda \otimes \overline{\mathbb{Y}}_s). 
\]
The only thing to check is that the hypothesis 
\[
\mathrm{Fil}^0 \mathscr{D} (H)
=\ker \left( 
 \mathscr{D} (H)   \to  \frac{ \mathscr{D}(X)  } {  \mathrm{Fil}^0 \mathscr{D}(X)   }
    \times \frac{    \mathscr{D}( \Lambda \otimes \overline{\mathbb{Y}} )  }{  \mathrm{Fil}^0     \mathscr{D}( \Lambda \otimes \overline{\mathbb{Y}} ) }  \right) 
\]
of that lemma is satisfied.  This hypothesis  is equivalent  to 
\[
\frac{VD(H)}{pD(H)} = 
\ker\left(  
\frac{D(H)}{pD(H)} \to
 \frac{D(X)}{VD(X)} \times \frac{D(\Lambda \otimes  \overline{\mathbb{Y}})  }{ V D(\Lambda \otimes  \overline{\mathbb{Y}}) } 
\right) ,
\]
which is clear from \eqref{H intersection}.

It remains to show that $\widetilde{G}$ is the constant deformation of $G$.
Denote by $H^\circ$ and $G^\circ$ be the constant deformations of $H$ and $G$ to $\tilde{s}$.
 Grothendieck-Messing theory provides us with  a commutative diagram 
\[
\xymatrix{
{  \mathscr{D}(H^\circ) }  \ar@{=}[r]  \ar[d]_{d}  &   {  \mathscr{D}(\widetilde{H}) } \ar[d]^{d} \\
{  \mathscr{D}(G^\circ) }    \ar@{=}[r]  &   {  \mathscr{D}(\widetilde{G}) }  
}
\]
of vector bundles on $\widetilde{s}$.
As we have already know $H^\circ \iso \widetilde{H}$,  the top isomorphism    respects  the Hodge filtrations.
 The vertical arrows respect Hodge filtrations, because they arise from morphisms of $p$-divisible groups.     
 over $\tilde{s}$.  We must show that the bottom isomorphism also respects Hodge filtrations.

 The essential point is that the vertical arrows  restrict to  surjections
\[
\mathrm{Fil}^0  \mathscr{D}(H^\circ)_0   \to  \mathrm{Fil}^0 \mathscr{D}(G^\circ)_0 \quad \mbox{and}\quad 
\mathrm{Fil}^0  \mathscr{D}(\widetilde{H})_0  \to  \mathrm{Fil}^0 \mathscr{D}( \widetilde{G})_0.
\]
Indeed, surjectivity can be checked on fibers.  On fibers both are equivalent to the surjectivity of
\[
\frac{ VD(H)_1 }{ pD(H)_0}  \to  \frac{ VD(G)_1 }  {  pD(G)_0 }, 
\]
which is clear from Lemma  \ref{lem:intermediate crystal}.

It follows that under the canonical identification  $\mathscr{D}(G^\circ) = \mathscr{D}(\widetilde{G})$ 
\begin{align*}
\mathrm{Fil}^0 \mathscr{D}(G^\circ)_0
 & = \mathrm{Image} ( \mathrm{Fil}^0  \mathscr{D}(H^\circ)_0 \to   \mathscr{D}(G^\circ)_0 )   \\
& = 
 \mathrm{Image} ( \mathrm{Fil}^0  \mathscr{D}(\widetilde{H})_0 \to   \mathscr{D}(\widetilde{G})_0 )  \\
 & = 
 \mathrm{Fil}^0 \mathscr{D}( \widetilde{G})_0.
\end{align*}
A similar argument, using   $d^\vee: G\to H^\vee$ in place of $d:H\to G$, shows that
$\mathrm{Fil}^0 \mathscr{D}(G^\circ)_1 = \mathrm{Fil}^0 \mathscr{D}(\widetilde{G})_1$.
Hence $G^\circ \iso \widetilde{G}$ as deformations of $G$.
 \end{proof}


\subsection{A Deligne-Lusztig variety}
\label{ss:DL}


In this subsection we identify the  reduced scheme $Y^{k,\red}_\Lambda$ underlying $Y^k_\Lambda$ with a Deligne-Lusztig variety.

Suppose $S$ is an $\breve{\F}_p$-scheme,  and let $\sigma :\co_S \to \co_S$ be the $p$-power Frobenius.
The pairing \eqref{twisted pairing}  induces a pairing of $\co_S$-modules
\[
 ( \breve{\Lambda}_0 \otimes_{ \breve{\Z}_p} \co_S ) \times ( \breve{\Lambda}_0 \otimes_{ \breve{\Z}_p}  \co_S )
  \to \co_S
\]
that is linear in the first variable and $\sigma$-linear in the second.
For any local direct summand $\mathscr{F} \subset  \breve{\Lambda}_0 \otimes_{ \breve{\Z}_p}  \co_S$,  its left annihilator 
\[
\mathscr{F}^\perp = \{ x \in  \breve{\Lambda}_0 \otimes_{ \breve{\Z}_p}   \co_S :  b( x, \mathscr{F} ) =0 \}
\]
is again a  local direct summand.  Taking into account the switch from right dual to left annihilator, 
the analogue of \eqref{twisted dual properties} is 
\[
\mathscr{F}^{\perp \perp}= \Phi^2 \mathscr{F} 
\quad \mbox{and}\quad
 \Phi^2(\mathscr{F}^\perp) = (\Phi^2 \mathscr{F})^\perp.
\]

\begin{definition}\label{def:non-minuscule DL}
Define $\mathrm{DL}_\Lambda^k$ to be the proper $\breve{\F}_p$-scheme whose functor of points assigns to any $\breve{\F}_p$-scheme $S$ the set of flags of $\co_S$-module local direct summands
\[
0 \stackrel{k-1}{\subset}  \mathscr{J} \stackrel{1}{\subset} \mathscr{K}^\perp 
\stackrel{n-2k}{\subset}      \mathscr{K} \stackrel{1}{\subset} \mathscr{J}^\perp \stackrel{k-1}{\subset}
 \breve{\Lambda}_0 \otimes \co_S
\]
of the indicated coranks.   
\end{definition}

\begin{proposition}\label{prop:DLstructure}
The scheme $\mathrm{DL}_\Lambda^k$ of Definition \ref{def:non-minuscule DL} is a  Deligne-Lusztig variety for the unitary group of the finite hermitian space $\Lambda/p\Lambda$.
 It is  smooth of dimension $n-k-1$, and is  irreducible if $k<n/2$. 
\end{proposition}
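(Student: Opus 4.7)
The plan is to identify $\mathrm{DL}_\Lambda^k$ with a Deligne-Lusztig variety for the unitary group $G$ of $\Lambda/p\Lambda$, in the generalized (parabolic) sense of \S 4.4 of \cite{VollaardWedhorn}, and then appeal to standard results for smoothness, dimension, and irreducibility. First, I would set up the identification $V := \breve{\Lambda}_0 \otimes_{\breve{\Z}_p}\breve{\F}_p \cong (\Lambda/p\Lambda) \otimes_{\F_{p^2},i_0}\breve{\F}_p$, under which $G_{\breve{\F}_p} \cong \GL(V)$. The key observation is that the $\sigma$-semilinearity of the mod-$p$ reduction of $b$ records exactly the twisted Frobenius $F$ of the $\F_p$-structure of $G$: on the partial flag variety of flags of type $(k-1,n-k)$ in $V$, the operation $(\mathscr{J},\mathscr{K}) \mapsto (\mathscr{K}^\perp,\mathscr{J}^\perp)$ agrees with $F$, landing in the flag variety of the complementary type $(k,n-k+1)$ as dictated by the nontrivial involution of the type $A_{n-1}$ Dynkin diagram induced by $F$.

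Next, I would observe that the defining data of a point of $\mathrm{DL}_\Lambda^k$ is precisely a flag $(\mathscr{J},\mathscr{K})$ in a prescribed relative position with its Frobenius twist $(\mathscr{K}^\perp,\mathscr{J}^\perp)$, the relative position being encoded by the specified corank data. This identifies $\mathrm{DL}_\Lambda^k$ with the Deligne-Lusztig variety $X^P_w$, for $P$ the parabolic stabilizing a reference flag of type $(k-1,n-k)$ and $w \in W_P \backslash W / W_{F(P)}$ a specific Weyl element. A direct reading of the corank pattern gives $\ell(w) = n-k-1$, which is the claimed dimension. Smoothness is then automatic from the general theory of parabolic Deligne-Lusztig varieties, since they arise as smooth fibrations over the Borel-type Deligne-Lusztig varieties.

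Finally, for irreducibility when $k<n/2$: the standard criterion is that $X^P_w$ is irreducible provided $w$ is not contained in any proper $F$-stable standard parabolic subgroup of $W$ containing $W_P$. The inequality $k<n/2$ guarantees that the middle block $\mathscr{K}/\mathscr{K}^\perp$ has positive dimension $n-2k$, which forces an \emph{indecomposability} of $w$ under $F$ that blocks such containment; verifying this combinatorial condition is the main technical obstacle. An alternative route I would keep in reserve is to present $\mathrm{DL}_\Lambda^k$ as a Zariski-locally trivial $\mathbb{P}^{k-1}$-bundle (whose fibers parametrize $\mathscr{J} \subset \mathscr{K}^\perp$ once $\mathscr{K}$ is fixed) over the subvariety parametrizing only $\mathscr{K}$, and then identify this base with a classical Fermat-hypersurface-type Deligne-Lusztig variety whose irreducibility for $k<n/2$ is standard.
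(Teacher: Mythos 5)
Your overall strategy matches the paper's: identify $\mathrm{DL}_\Lambda^k$ with a generalized Deligne--Lusztig variety for the unitary group of $\Lambda/p\Lambda$ in the sense of \S 4.4 of \cite{VollaardWedhorn}, then read off smoothness, dimension, and irreducibility from the standard theory and Bonnaf\'e--Rouquier \cite{BR}.

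The gap is in the irreducibility argument, where you write that ``verifying this combinatorial condition is the main technical obstacle'' and defer to a reserve route rather than doing the check. In fact the verification is a one-liner once the parabolic is made explicit, and omitting it leaves out the entire content of the $k<n/2$ hypothesis. In the paper's normalization one identifies $\mathrm{DL}_\Lambda^k \cong X_I(\mathrm{id})$ for $H \cong \GL_n$ acting on $\breve{\Lambda}_0/p\breve{\Lambda}_0$, with $I = S \smallsetminus \{s_{k-1},s_{n-k}\}$ and $\sigma(s_i)=s_{n-i}$; the Bonnaf\'e--Rouquier criterion for irreducibility of $X_I(\mathrm{id})$ is exactly $I \cup \sigma(I) = S$, and since $\sigma(I)=S\smallsetminus\{s_k,s_{n-k+1}\}$ this fails if and only if $\{k-1,n-k\}\cap\{k,n-k+1\}\neq\emptyset$, which for $1 \le k \le \lfloor n/2\rfloor$ happens only when $n-k=k$, i.e. $k = n/2$. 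Two further cautions: the assertion ``$\ell(w)=n-k-1$'' does not square with the $w=\mathrm{id}$ normalization actually used, where the dimension arises from the parabolic data (the cartesian diagram over $H/P_I\times H/P_{\sigma(I)}$), not from $\ell(w)$ --- you should pin down which convention you are invoking; and the reserve $\mathbb{P}^{k-1}$-fibration needs an argument that $\mathscr{K}\subset\mathscr{J}^\perp$ is automatic once $\mathscr{J}\subset\mathscr{K}^\perp$, which is not immediate because $\perp$ is a one-sided (left) annihilator of a $\sigma$-sesquilinear pairing, so $b(\mathscr{J},\mathscr{K})=0$ and $b(\mathscr{K},\mathscr{J})=0$ are a priori independent conditions.
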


\begin{proof}
Let $H$ be the base change to $\breve{\F}_p$ of the unitary group of $\Lambda/p\Lambda$.
Fix an $\co_E$-basis $x_1,\ldots, x_n \in \Lambda$ with respect to which the hermitian form satisfies the anti-diagonal relation $h(x_i,x_j) = \delta_{i, n-i+1}$, and denote by
$
y_1,\ldots, y_n \in \breve{\Lambda}_0
$
the projections of $x_1,\ldots, x_n$ to the first factor in the decomposition $\breve{\Lambda}=\breve{\Lambda}_0\oplus \breve{\Lambda}_1$.
Use the reductions  $y_1,\ldots, y_n \in \breve{\Lambda}_0/p\breve{\Lambda}_0$  to identify
\[
H \iso \GL(\breve{\Lambda}_0/p\breve{\Lambda}_0) \iso \GL_n.
\]

The Weyl group $W$ of the diagonal torus is generated by the set of simple reflections $S=\{ s_1,\ldots, s_{n-1} \}$,  where $s_i \in  H(\breve{\F}_p)$ is the transposition matrix interchanging $y_i \leftrightarrow y_{i+1}$.  The action of Frobenius on $S$ is 
$
\sigma(s_i) = s_{n-i}.
$
Every subset $I\subset S$ determines a standard parabolic subgroup 
\[
P_I = B W_I B \subset H,
\]
where $W_I \subset W$ is the subgroup generated by $I$, and $B\subset H$ is the upper-triangular Borel.

We are interested in the subset
$
I  = S \smallsetminus \{ s_{k-1},  s_{n-k} \}
$
whose corresponding parabolic $P_I$ is the stabilizer of the standard flag 
\[
0 \stackrel{k-1}{\subset}  \mathscr{J} 
\stackrel{n-2k+1}{\subset}      \mathscr{K} \stackrel{k}{\subset}
 \breve{\Lambda}_0 / p  \breve{\Lambda}_0
\]
defined by
\[
\mathscr{J} = \mathrm{Span}\{ y_1,\ldots, y_{k-1} \} 
\quad \mbox{and} \quad
\mathscr{K} = \mathrm{Span}\{ y_1,\ldots, y_{n-k} \} .
\]
The parabolic $P_{\sigma(I)}$ defined by $\sigma(I) = S \smallsetminus \{ s_k , s_{n-k+1} \}$ is the stabilizer of 
\[
0 \stackrel{k}{\subset}   \mathscr{K}^\perp 
\stackrel{n-2k+1}{\subset}     \mathscr{J}^\perp \stackrel{k-1}{\subset}
 \breve{\Lambda}_0 / p  \breve{\Lambda}_0.
\]

Using Lemma 2.12 of \cite{Vollaard}, one sees that   $\mathrm{DL}_\Lambda^k$ sits in a cartesian diagram
\[
\xymatrix{
{  \mathrm{DL}_\Lambda^k }   \ar[r] \ar[d]  & {   H / P_I    }  \ar[d]^{\mathrm{id} \times \sigma}   \\
{   H / P_{ I \cap \sigma(I) }    }  \ar[r]  &  {     H / P_I   \times   H / P_{  \sigma(I) } ,} 
}
\]
and hence, using the notation of \S 4.4 of \cite{VollaardWedhorn},  
\[
\mathrm{DL}_\Lambda^k \iso X_I( \mathrm{id} ).
\]
All parts of the proposition now follow from the discussion of \emph{loc.~cit.}.  
Note that the irreducibility claim (a result of Bonnafe and Rouquier \cite{BR}) requires $k<n/2$ because this ensures   $I \cup   \sigma  (I) = S$.
\end{proof}

The remainder of this subsection is devoted to proving $Y_\Lambda^{k,\red} \iso \mathrm{DL}_\Lambda^k$.  We need two elementary lemmas from commutative algebra.

\begin{lemma}\label{lem:general iso}
Let $\kappa$ be an algebraically closed field, let  $\pi :X' \to X$ be a proper unramified morphism between $\kappa$-schemes of finite type, and suppose $\pi$  is bijective on closed points.
\begin{enumerate}
\item
The morphism $\pi$ is a closed immersion, and induces an isomorphism of underlying reduced schemes.
\item
If $X$ is reduced then so is $X'$, and $\pi$ is an isomorphism.
\end{enumerate}
\end{lemma}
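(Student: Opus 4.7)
My plan is to reduce to the case of a finite morphism and then analyze its structure sheaf stalk by stalk. First I would observe that since $\pi$ is proper and unramified, it is locally quasi-finite, hence quasi-finite, hence (being proper) finite. Writing $X' = \underline{\Spec}_X(\mathcal{A})$ for a coherent sheaf of $\co_X$-algebras $\mathcal{A}$, the task becomes showing that the structural map $\co_X \to \mathcal{A}$ is surjective. For any closed point $x \in X$, the scheme-theoretic fiber $X'_x$ is geometrically reduced (because $\pi$ is unramified) and consists of a single closed point (because $\pi$ is bijective on closed points). Since $\kappa$ is algebraically closed, this single point has residue field $\kappa$, so the $k(x)$-vector space $\mathcal{A}_x \otimes_{\co_{X,x}} k(x) \iso \kappa$ is one-dimensional and generated by the image of $1$. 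Nakayama's lemma then yields surjectivity of $\co_{X,x} \to \mathcal{A}_x$ at every closed point, and since the cokernel is a coherent $\co_X$-module whose stalks vanish at every closed point of a finite-type $\kappa$-scheme, it vanishes identically. Thus $\pi$ is a closed immersion.

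Once $\pi$ is known to be a closed immersion, bijectivity on closed points forces its image to be a closed subset of $X$ containing every closed point, hence equal to $X$ topologically. Thus $X'$ and $X$ share the same underlying topological space, and since the reduced scheme structure on a topological space is unique, the induced map $X'^{\red} \to X^{\red}$ is an isomorphism, proving (1). For (2), the closed immersion $\pi$ corresponds to an ideal sheaf $\mathcal{I} \subset \co_X$ with $V(\mathcal{I}) = X$ topologically, which forces $\mathcal{I}$ to be contained in the nilradical of $\co_X$. If $X$ is reduced this nilradical is zero, so $\mathcal{I} = 0$, $\pi$ is an isomorphism, and in particular $X'$ is reduced.

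The one point that needs care is the description of the fiber $X'_x$: concretely, one needs that for an unramified morphism the fiber algebra $\mathcal{A}_x \otimes_{\co_{X,x}} k(x)$ is a finite \emph{reduced} $k(x)$-algebra with residue fields finite separable over $k(x)$, so that over an algebraically closed $\kappa$ it is a product of copies of $\kappa$ indexed by the set-theoretic fiber. With this standard fact in hand, everything else is a routine application of finiteness, Nakayama, and the behavior of reduced subschemes.
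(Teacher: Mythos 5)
Your proof is correct and follows essentially the same route as the paper's: reduce to a finite (hence affine) morphism, use unramifiedness together with algebraic closedness and bijectivity on closed points to identify the fiber as $\kappa$ and apply Nakayama to get a closed immersion, then use the Jacobson property to conclude that the kernel ideal lies in the nilradical. The paper works explicitly with $A \to B$ and $I = \ker(A\to B)$ where you phrase things sheaf-theoretically, but the content is the same.
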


\begin{proof}
A proper and quasi-finite morphism is finite, hence affine.
This reduces the proof to the case where $X=\Spec(A)$ and $X'=\Spec(B)$ with $A\to B$ a finite morphism of finite type $\kappa$-algebras.

The unramifiedness assumption implies that  $\mathfrak{m}  B \subset B$ is maximal for any maximal ideal $\mathfrak{m}\subset A$.  
In particular 
\[
\kappa \iso A/\mathfrak{m} \to B/  \mathfrak{m}  B\iso \kappa
\]
 is an isomorphism of $A$-modules, and  Nakayama's lemma implies that $ A \to B$  is surjective.  
 In other words,   $\pi$ is a closed immersion.

 The bijectivity of $A \to B$ on maximal ideals implies  that $I=\ker(A\to B)$ is contained in every maximal ideal of $A$. 
 As $A$ is a Jacobson ring, the intersection of its maximal ideals is equal to the  nilradical $\mathfrak{n} \subset A$.
 Thus $I \subset \mathfrak{n}$, which implies that  $\mathfrak{n} / I$ is the nilradical of $A/I =B$.
All claims of the lemma follow immediately.
\end{proof}

\begin{lemma}\label{lem:bundle criterion}
Let $\kappa$ be an algebraically closed field, suppose $X$ is a reduced scheme of finite type over $\kappa$, and $\mathcal{E}$ is a coherent $\co_X$-module.
If the $\kappa$-dimension of the fiber $\mathcal{E}_x$  is constant as $x\in X$ varies over all closed points, then $\mathcal{E}$ is locally free.
\end{lemma}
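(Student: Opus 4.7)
The plan is to work locally around an arbitrary closed point $x \in X$ and exhibit an explicit trivialization $\co_U^r \iso \mathcal{E}|_U$ on a neighborhood $U$ of $x$, where $r = \dim_\kappa \mathcal{E}_x$ is the common fiber dimension. The one subtle input is reducedness, which I will use at the very end to kill the kernel of a candidate trivialization.

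First I would reduce to the affine case $X = \Spec(A)$ with $A$ a reduced finitely generated $\kappa$-algebra, and $\mathcal{E}$ corresponding to a finitely generated $A$-module $M$. Pick a closed point $x$, lift a $\kappa$-basis of $\mathcal{E}_x$ to sections $s_1, \dots, s_r$ of $\mathcal{E}$ defined near $x$, and apply Nakayama's lemma to the local ring $\co_{X,x}$ to conclude that $s_1, \dots, s_r$ generate the stalk $\mathcal{E}_x$. Since $\mathcal{E}$ is coherent, the sections generate $\mathcal{E}$ on some open neighborhood $U$ of $x$, producing a surjection
\[
\phi : \co_U^r \twoheadrightarrow \mathcal{E}|_U.
\]

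Let $\mathscr{K} = \ker(\phi)$, a coherent subsheaf of $\co_U^r$. For every closed point $y \in U$, the hypothesis gives $\dim_\kappa \mathcal{E}(y) = r$, so the induced fiber map $\phi(y) : \kappa^r \to \mathcal{E}(y)$ is a surjective $\kappa$-linear map between vector spaces of the same dimension, hence an isomorphism. In particular, any section $s \in \Gamma(V, \mathscr{K})$ on an open $V \subset U$, viewed as a section of $\co_V^r$, must vanish at every closed point of $V$: its image in $\kappa^r$ lies in the kernel of $\phi(y)$, which is zero.

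The hard part, conceptually, is converting this pointwise vanishing into actual vanishing of $s$; this is precisely where reducedness enters. Since $X$ is of finite type over a field it is a Jacobson scheme, so the closed points of $V$ are very dense, and a section of the \emph{free} sheaf $\co_V^r$ that vanishes at every closed point must vanish identically because $V$ is reduced (equivalently, the intersection of all maximal ideals of the coordinate ring of $V$ equals its nilradical, which is zero). Therefore $\mathscr{K} = 0$, so $\phi$ is an isomorphism and $\mathcal{E}|_U \iso \co_U^r$ is free. As $x$ was arbitrary, $\mathcal{E}$ is locally free.
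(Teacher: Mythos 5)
Your proof is correct and follows essentially the same route as the paper's: Nakayama gives a local surjection $\co_U^r \twoheadrightarrow \mathcal{E}|_U$, constancy of fiber dimension forces the fiber maps to be isomorphisms so the kernel lives inside the Jacobson radical times $\co_U^r$, and reducedness together with the Jacobson property of finite-type $\kappa$-schemes kills it. The only cosmetic difference is that you phrase the final step in terms of sections vanishing at all closed points rather than in terms of the Jacobson radical ideal sheaf directly.
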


\begin{proof}
Let $r$ be the common dimension of all fibers.  At any closed point $x \in X$, one can use Nakayama's lemma to find an open  neighborhood $U\ni x$ over which there exists  a surjection $\co_U^r \to \mathscr{E}_U$.
By the constancy of fiber dimensions, such a map must induce an isomorphism fiber-by-fiber, and hence have kernel contained in the subsheaf $\mathcal{J} \co_U^r$, where $\mathcal{J} \subset \co_U$ is the Jacobson radical.
As $X$ is reduced of finite type over a field, $\mathcal{J}=0$, proving $\co_U^r \iso \mathscr{E}_U$.
\end{proof}

\begin{theorem}\label{thm:YtoDL}
There is an  isomorphism 
\[
Y^{k,\red}_\Lambda \iso \mathrm{DL}_\Lambda^k
\]
sending a pair $(M_0,N_0) \in Y^{k,\red}_\Lambda( \breve{\F}_p)$ as in Proposition \ref{prop:RZ tilde points} to the flag 
\[
0 \subset   \frac{ p M_0^*}{ p\breve{\Lambda}_0 }     \subset   \frac{pN_0}{ p\breve{\Lambda}_0} 
  \subset   \frac{ N_0^* }{ p\breve{\Lambda}_0   }   \subset  \frac{ M_0 } { p\breve{\Lambda}_0 }  \subset  \frac{  \breve{\Lambda}_0} { p \breve{\Lambda}_0 }.
\]
\end{theorem}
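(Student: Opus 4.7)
The plan is to construct a morphism $\Psi : Y_\Lambda^k \to \mathrm{DL}_\Lambda^k$ of $\breve{\F}_p$-schemes extending the map specified in the statement on $\breve{\F}_p$-points, and then deduce the theorem by applying Lemma \ref{lem:general iso}(2) to the restriction $\Psi|_{Y_\Lambda^{k,\red}}$. Since $\mathrm{DL}_\Lambda^k$ is smooth and hence reduced (Proposition \ref{prop:DLstructure}), and both schemes are projective, it will suffice to show that $\Psi|_{Y_\Lambda^{k,\red}}$ is proper (automatic), bijective on $\breve{\F}_p$-points, and unramified.

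To construct $\Psi$, apply Grothendieck-Messing's functor $\mathscr{D}$ to the universal diagram \eqref{Y tilde functor}. Each of $a, b, b^\vee, a^\vee$ has kernel annihilated by $p$: for $a$ and $a^\vee$ this holds because $b \circ a = i$ and $a^\vee \circ b^\vee = i^\vee$ are the multiplication-by-$p$ isogenies, and for $b, b^\vee$ the commutativity of the universal diagram gives $b^\vee \circ b = d^\vee \circ d$, whose kernel is in $H[p]$ by \eqref{Hpol}. Lemma \ref{lem:free crystals} then gives that the induced maps of vector bundles have locally free kernels, images, and cokernels. Fixing a trivialization $\mathscr{D}(\overline{\mathbb{Y}})_0 \iso \co_{Y_\Lambda^k}$ as in Remark \ref{rem:untwist L}, define sub-sheaves of $\breve{\Lambda}_0 \otimes \co_{Y_\Lambda^k}$ by
\begin{align*}
\mathscr{J}^\perp &= \mathrm{image}\bigl(\mathscr{D}(b)_0\bigr), \qquad \mathscr{K}^\perp = \ker\bigl(\mathscr{D}(b^\vee)_0\bigr), \\
\mathscr{K} &= \mathrm{image}\bigl(\mathrm{Fil}^0\mathscr{D}(H)_0 \hookrightarrow \mathscr{D}(H)_0 \xrightarrow{\mathscr{D}(b)_0} \breve{\Lambda}_0 \otimes \co_{Y_\Lambda^k}\bigr),\\
\mathscr{J} &= \mathrm{image}\bigl(\mathrm{Fil}^0\mathscr{D}(H^\vee)_0 \hookrightarrow \mathscr{D}(H^\vee)_0 \xrightarrow{p \cdot \mathscr{D}(a^\vee)_0} \breve{\Lambda}_0 \otimes \co_{Y_\Lambda^k}\bigr).
\end{align*}
At closed points, Proposition \ref{prop:RZ tilde points}(1) identifies these respectively with $M_0/p\breve{\Lambda}_0$, $pN_0/p\breve{\Lambda}_0$, $N_0^*/p\breve{\Lambda}_0$, and $pM_0^*/p\breve{\Lambda}_0$, of the required ranks $n-k+1, k, n-k, k-1$. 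After restricting to $Y_\Lambda^{k,\red}$, Lemma \ref{lem:bundle criterion} upgrades them to local direct summands, and the chain of inclusions together with the perpendicularity (isotropy) conditions are closed conditions holding at every closed point thanks to the lattice chain in Proposition \ref{prop:RZ tilde points}(1), hence on all of $Y_\Lambda^{k,\red}$ by reducedness. Bijectivity on $\breve{\F}_p$-points is then immediate, since $p\breve{\Lambda}_0$ is contained in each lattice of the chain.

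For unramifiedness, consider a deformation $(\widetilde{H}, \widetilde{G}, \widetilde{H}^\vee, \ldots)$ over $\breve{\F}_p[\epsilon]$ of a closed point mapping to the constant lift of its flag, and show the deformation itself is constant. By Grothendieck-Messing the crystal $\mathscr{D}(\widetilde H)_0 = \mathscr{D}(H)_0 \otimes \breve{\F}_p[\epsilon]$ is canonically trivial, and the deformation of $\mathrm{Fil}^0\mathscr{D}(H)_0$ is parametrized by a tangent vector $\phi_0 \in \Hom(\mathrm{Fil}^0\mathscr{D}(H)_0, \Lie(H)_0)$. The induced deformation of $\mathscr{K}$ corresponds to $\bar b_0 \circ \phi_0$, where $\bar b_0 \colon \Lie(H)_0 \to \mathscr{J}^\perp/\mathscr{K}$ is the map of rank-one modules induced by $\mathscr{D}(b)_0$. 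A closed-point calculation identifies $\bar b_0$ with the natural isomorphism $M_0/N_0^* \iso M_0/N_0^*$, so $\widetilde{\mathscr{K}}$ constant forces $\phi_0 = 0$, hence $\mathrm{Fil}^0\mathscr{D}(\widetilde H)_0$ is constant; the symmetric argument using $\widetilde{\mathscr{J}}$ shows $\mathrm{Fil}^0\mathscr{D}(\widetilde H^\vee)_0$ is constant. The $e_1$-components of the Hodge filtrations are then recovered from the $e_0$-components via the polarization-induced pairings on crystals (which pair $\mathscr{D}(\cdot)_0$ with $\mathscr{D}(\cdot)_1$ $\co_E$-equivariantly), and so are likewise constant. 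Grothendieck-Messing then yields that $\widetilde H$ and $\widetilde H^\vee$ are the constant deformations, and constancy of $\widetilde G$ follows exactly as in the proof of Proposition \ref{prop:pi_0 isomorphism} via Lemma \ref{lem:intermediate crystal}. The main obstacle lies precisely in this unramifiedness verification: the flag in $\breve{\Lambda}_0$ directly controls only the $e_0$-Hodge filtrations, and the rigidity of the $e_1$-components must be extracted from the polarization structure built into the moduli problem.
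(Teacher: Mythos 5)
Your proposal is correct and follows essentially the same route as the paper's proof: define the flag by applying $\mathscr{D}$ to the universal diagram and taking images of Hodge filtrations, pass to the reduced scheme and use Lemma \ref{lem:bundle criterion} to obtain local direct summands, verify unramifiedness via Grothendieck--Messing deformation theory by showing the $e_0$-Hodge filtrations of $H$ and $H^\vee$ are recoverable from the flag and then using polarization duality for the $e_1$-components, and conclude with Lemma \ref{lem:general iso}. The only stylistic differences are that you work with $\mathscr{D}(H)_0$ directly after fixing a trivialization of $\mathscr{D}(\overline{\mathbb{Y}})$ (the paper uses the $\underline{\Hom}_{\co_E}(\mathscr{D}(\overline{\mathbb{Y}}),-)$ twist to avoid the choice), and you phrase the recovery of $\mathrm{Fil}^0$ as a tangent-space computation via the isomorphism $\bar b_0$ rather than as the explicit kernel identity of the paper's Lemma \ref{lem:miracle filtration} --- but these express the same fact.
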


\begin{proof}
As in  \eqref{GMbundles}, Grothendieck-Messing theory provides us with a diagram of vector bundles
\[
\xymatrix{
{p  \breve{\Lambda}  \otimes \mathscr{D}(\overline{\mathbb{Y}})   }\ar[r]^a    \ar@/_/[drr]_{i =0 }  &  {\mathscr{D}(H)} \ar[dr]^b  \ar[r]|{\, d\, }   &  
{\mathscr{D}(G) } \ar[r]|{\, d^\vee \, }  & {\mathscr{ D }(H^\vee)  }  \ar[r]^{a^\vee}  & {p^{-1}   \breve{\Lambda}    \otimes \mathscr{D}(\overline{\mathbb{Y}})     } \\
& &  {   \breve{\Lambda}   \otimes \mathscr{D}(\overline{\mathbb{Y}})    } \ar[ur]^{b^\vee} \ar@/_/[urr]_{i^\vee=0}
}
\]
on $Y^{k,\red}_\Lambda$.
Applying the functor $\Hom_{\co_{\breve{E}}}( \mathscr{D}(\overline{\mathbb{Y}})  , - )$ yields a diagram
\[
\xymatrix{
{p  \breve{\Lambda}  \otimes \co_{Y^\red_\Lambda}    }\ar[r]^a    \ar@/_/[drr]_{i =0 }  &  {\mathscr{M}} \ar[dr]^b  \ar[r]  &  
{\mathscr{N}_0 \oplus \mathscr{M}_1 } \ar[r]   & {\mathscr{ N } }  \ar[r]^{a^\vee}  & {p^{-1}   \breve{\Lambda} \otimes   \co_{Y^\red_\Lambda}    } \\
& &  {   \breve{\Lambda}   \otimes \co_{Y^\red_\Lambda}    } \ar[ur]^{b^\vee} \ar@/_/[urr]_{i^\vee=0}
}
\]
 of vector bundles on $Y^{k,\red}_\Lambda$, in which 
\[
\mathscr{M} =  \underline{\Hom}_{\co_E} ( \mathscr{D}(  \overline{\mathbb{Y}} ) ,  \mathscr{D}(H) )
\quad\mbox{and}\quad
\mathscr{N} =   \underline{\Hom}_{\co_E} ( \mathscr{D}(  \overline{\mathbb{Y}} ) ,  \mathscr{D}(H^\vee) ) 
\]
are defined exactly as in \eqref{the L bundle}.
Note that in the first diagram the tensor products are over $\co_{\breve{E}}$, while in the second they are over $\breve{\Z}_p$.  
 Note also that we are making use of the fact  that the natural maps
\[
\mathcal{D}(H)_1 \to  \mathcal{D}(G)_1 \quad\mbox{and}\quad \mathcal{D}(G)_0 \to \mathcal{D}(H^\vee)_0
\]
are isomorphisms, so that 
$
\mathscr{N}_0 \oplus \mathscr{M}_0 \iso 
\underline{\Hom}_{\co_E} ( \mathscr{D}(  \overline{\mathbb{Y}} ) ,  \mathscr{D}(G) ) .
$
  Indeed,  this can be checked  on fibers,  where it follows from Lemma \ref{lem:intermediate crystal}.

The vector bundles above come with filtrations 
\begin{align*}
0\subset 
 \mathrm{Fil}^1 \mathscr{M} \subset 
 \mathrm{Fil}^0 \mathscr{M} \subset 
\mathscr{M} \quad\mbox{and}\quad 
0\subset 
 \mathrm{Fil}^1 \mathscr{N} \subset 
 \mathrm{Fil}^0 \mathscr{N} \subset 
\mathscr{N}  
\end{align*}
induced by the Hodge filtrations \eqref{GMbundles} on 
$\mathscr{D}(  \overline{\mathbb{Y}} )$,   $\mathscr{D}(H)$, and  $\mathscr{D}(H^\vee)$.
More precisely,  $\Fil^0 \mathscr{M}$ consists of morphisms that carry $\Fil^0 \mathscr{D}(  \overline{\mathbb{Y}} )$ into $\Fil^0 \mathscr{D}(H)$, while $\Fil^1\mathscr{M}$ consists of morphisms that  carry 
$ \mathscr{D}(  \overline{\mathbb{Y}} )$ into $\Fil^0 \mathscr{D}(H)$ and kill $\Fil^0 \mathscr{D}(  \overline{\mathbb{Y}} )$.
The filtration on $\mathscr{N}$ is defined in the same way.

As in Remark \ref{rem:untwist L}, a choice of $D(\overline{\mathbb{Y}}) \iso \co_{\breve{E}}$
determines isomorphisms
\begin{equation}\label{MN untwist}
\mathscr{M} \iso \mathscr{D}(H) \quad\mbox{and}\quad \mathscr{N} \iso \mathscr{D}(H^\vee).
\end{equation}
While these do not respect the filtrations, they do identify
\begin{equation}\label{MN filters}
\Fil^0 \mathscr{M}_0 \iso \Fil^0 \mathscr{D}(H)_0 
\quad \mbox{and}\quad  \Fil^0 \mathscr{N}_0 \iso \Fil^0 \mathscr{D}(H^\vee)_0 .
\end{equation}
This follows from the signature $(0,1)$ condition on $\overline{\mathbb{Y}}$, which is equivalent to
$\Fil^0 \mathscr{D}(  \overline{\mathbb{Y}} )_0 = \mathscr{D}(  \overline{\mathbb{Y}} )_0$.

\begin{lemma}\label{lem:MN filter fibers}
Fix a point $s\in Y^{k,\red}_\Lambda( \breve{\F}_p)$ corresponding to a lattice chain 
\[
 p\breve{\Lambda}_0 \stackrel{k-1}{\subset}  p M_0^* \stackrel{1}{\subset}  pN_0 \stackrel{n-2k}{\subset}  N_0^* \stackrel{1}{\subset}  M_0 \stackrel{k-1}{\subset} \breve{\Lambda}_0  
 \]
 as in Proposition \ref{prop:RZ tilde points}.  There are canonical identifications
 \[
\frac{N^*_0}{pM_0}  =  \Fil^0 \mathscr{M}_{0,s}   
 \subset  \mathscr{M}_{0,s} = \frac{M_0}{pM_0}
 \]
and
 \[
\frac{M^*_0}{pM_0}  =  \Fil^0 \mathscr{N}_{0,s}   
 \subset  \mathscr{N}_{0,s} = \frac{N_0}{pN_0}.
 \]
\end{lemma}

\begin{proof}
Fix an isomorphism $D(\overline{\mathbb{Y}}) \iso \co_{\breve{E}}$, so that 
\[
\mathscr{M}_{0,s} \stackrel{ \eqref{MN untwist}}{\iso} \mathscr{D}(H_s)_0 
= \frac{D(H_s)_0}{ p D(H_s)_0 } \stackrel{\eqref{MNlattices}}{\iso} \frac{M_0}{pM_0} .
\]
The first and third isomorphisms depend on the choice of $D(\overline{\mathbb{Y}}) \iso \co_{\breve{E}}$, but the composition does not.
Similarly, 
\[
\Fil^0 \mathscr{M}_{0,s} \stackrel{ \eqref{MN filters}}{\iso} \Fil^0 \mathscr{D}(H)_{0,s} 
= \frac{V D(H_s)_1}{pD(H_s)_0} 
\stackrel{  \ref{contraction diagrams}  }{\iso} 
  \frac{\Phi^{-1} M_1}{pM_0}
\stackrel{\eqref{fake dual shift}}{=}   \frac{ N_0^*}{pM_0}.
\]
 Once again, the first and third isomorphisms depend on the choice of  $D(\overline{\mathbb{Y}}) \iso \co_{\breve{E}}$, but the composition does not.
The claims about $\mathscr{N}_{0,s}$ are proved in the same way, using $H_s^\vee$ in place of $H_s$.
\end{proof}

Now consider the subsheaves  of $\breve{\Lambda}_0 \otimes \co_{Y^{k,\red}_\Lambda}$ defined by
 \begin{align*}
   \mathscr{J}  & = \mathrm{Image}(         \mathrm{Fil}^0 \mathscr{N}_0    \to  p^{-1}\breve{\Lambda}_0 \otimes \co_{Y^{k,\red}_\Lambda}  \iso \breve{\Lambda}_0 \otimes \co_{Y^{k,\red}_\Lambda}     ) \\
   \mathscr{K} & = \mathrm{Image}(   \mathrm{Fil}^0 \mathscr{M}_0    \to  \breve{\Lambda}_0 \otimes \co_{Y^{k,\red}_\Lambda}      ) .
 \end{align*}
 Lemma \ref{lem:MN filter fibers}  implies that the quotients by these subsheaves have constant fiber dimension, hence by Lemma \ref{lem:bundle criterion} the quotients are locally free, 
 and hence $ \mathscr{J}$ and $\mathscr{K}$ are local direct summands.
Again by checking on fibers, using  Lemma \ref{lem:MN filter fibers}, one sees that these subsheaves have left annihilators
 \begin{align*}
  \mathscr{J}^\perp & = \mathrm{Image}(   \mathscr{M}_0    \to  \breve{\Lambda}_0 \otimes \co_{Y^{k,\red}_\Lambda}      ) \\  
    \mathscr{K}^\perp & = \mathrm{Image}(   \mathscr{N}_0    \to  p^{-1}\breve{\Lambda}_0 \otimes \co_{Y^{k,\red}_\Lambda}   \iso \breve{\Lambda}_0 \otimes \co_{Y^{k,\red}_\Lambda}    ) .
\end{align*}
Yet again checking on fibers,  one finds using Lemma \ref{lem:MN filter fibers} that these satisfy 
\begin{equation}\label{Ylambda flag}
0 \stackrel{k-1}{\subset}  \mathscr{J} \stackrel{1}{\subset} \mathscr{K}^\perp 
\stackrel{n-2k}{\subset}      \mathscr{K} \stackrel{1}{\subset} \mathscr{J}^\perp \stackrel{k-1}{\subset}
 \breve{\Lambda}_0 \otimes \co_{Y^{k,\red}_\Lambda}     .
\end{equation}

The flag  \eqref{Ylambda flag} defines the desired morphism 
\begin{equation}\label{YtoDLconstruction}
Y^{k,\red}_\Lambda \to \mathrm{DL}_\Lambda^k,
\end{equation}
 and it remains to show that it is an isomorphism.  
 The key to this is the following lemma, which tells us how to recover the filtration \eqref{MN filters} from the flag \eqref{Ylambda flag}, using the quotient maps
 $\mathscr{M}_0 \to \mathscr{J}^\perp$ and  $\mathscr{N}_0 \to \mathscr{K}^\perp$.

\begin{lemma}\label{lem:miracle filtration}
The above vector bundles on $Y^{k,\red}_\Lambda$ satisfy
\[
 \mathrm{Fil}^0 \mathscr{M}_0  = \ker(  \mathscr{M}_0 \to \mathscr{J}^\perp / \mathscr{K} ) 
 \quad \mbox{and}\quad 
  \mathrm{Fil}^0 \mathscr{N}_0  = \ker(  \mathscr{N}_0 \to \mathscr{K}^\perp / \mathscr{J} ) .
\]
\end{lemma}

\begin{proof}
For the first equality, one uses Lemma \ref{lem:MN filter fibers} to check that the cokernel of 
$ \mathscr{M}_0 \to \mathscr{J}^\perp / \mathscr{K}$ has constant fiber dimension, and so is locally free by Lemma \ref{lem:bundle criterion}.  This implies that the kernel of this morphism is a local direct summand of $\mathscr{M}_0$.  The desired equality can therefore be checked on fibers, which is again done using Lemma \ref{lem:MN filter fibers}.  Indeed, at a point $(M_0,N_0) \in  Y^{k,\red}_\Lambda$  the desired equality is precisely
\[
\frac{N^*_0}{pM_0}  = \ker\left(   \frac{M_0}{pM_0}  \to  \frac{ M_0 / p \breve{\Lambda}_0 }{ N_0^*  / p \breve{\Lambda}_0 }  \right) ,
\]
which is clear.
The second equality is proved in exactly the same way. 
\end{proof}

\begin{lemma}\label{lem:YtoDLunr}
The morphism \eqref{YtoDLconstruction} is  unramified.
\end{lemma}

\begin{proof}
We show that the morphism in question is formally unramified.
Suppose we are given a diagram 
\[
\xymatrix{
{ S }  \ar[d] \ar[r]  & { Y^{k,\red}_\Lambda   }  \ar[d] \\
{ \widetilde{S} } \ar[r]  \ar@{-->}[ur] & {  \mathrm{DL}_\Lambda^k }  
}
\]
of solid arrows such that the left vertical arrow is a square-zero thickening of $\breve{\F}_p$-schemes.
We must show there is at most one dotted arrow making the diagram commute.

Suppose we have two such arrows $a,b : \widetilde{S} \to Y^{k,\red}_\Lambda$, and consider the $p$-divisible groups $a^*H$ and $b^*H$ on $\widetilde{S}$.  
We do not yet know that these are isomorphic, but they have the same restriction to $S \subset \widetilde{S}$, and so  Grothendieck-Messing deformation theory provides us with  canonical isomorphisms of vector bundles 
\[
a^*\mathscr{D}(H)\iso 
\mathscr{D}(a^*H) \iso \mathscr{D}(b^*H)  \iso b^*\mathscr{D}(H)
\]
 making the diagram 
\[
\xymatrix{
{ a^*  \mathscr{D}( H) }  \ar[dr]   \ar@{=}[rr]& &{ b^*  \mathscr{D}(H)  }  \ar[dl]  \\
&  {  \mathscr{D}( \breve{\Lambda}_0 \otimes \overline{\mathbb{Y}}_{\widetilde{S}} )}
}
\]
commute.  
Here the diagonal arrows are the pullbacks (via $a$ and $b$) of the morphism $\mathscr{D}(H) \to \mathscr{D}( \breve{\Lambda}_0 \otimes \overline{\mathbb{Y}})$ induced by the universal isogeny $H \to  \breve{\Lambda}_0 \otimes \overline{\mathbb{Y}}$ over $Y^{k,\red}_\Lambda$.

Fixing a choice of $D(\overline{\mathbb{Y}}) \iso \co_{\breve{E}}$, and hence isomorphisms \eqref{MN untwist} and \eqref{MN filters} of vector bundles on $Y^{k,\red}_\Lambda$, we find a diagram 
\[
\xymatrix{
{  a^*\mathscr{M}_0 }  \ar[dr]   \ar@{=}[rr]& &{ b^*\mathscr{M}_0  }  \ar[dl]  \\
&  { \breve{\Lambda}_0 \otimes \co_{\widetilde{S}} .}
}
\]
As $a$ and $b$ induce the same map $\widetilde{S} \to \mathrm{DL}_\Lambda^k$, we have equalities 
$a^*\mathscr{J}^\perp=b^*\mathscr{J}^\perp$ and $a^*\mathscr{K}=b^*\mathscr{K}$ as local direct summands of  $\breve{\Lambda}_0 \otimes \co_{\widetilde{S}}$, and hence a commutative diagram
\[
\xymatrix{
{ a^* \mathscr{D}(H)_0 } \ar@{=}[r] &{  a^*\mathscr{M}_0 }  \ar[d]   \ar@{=}[rr]& &{ b^*\mathscr{M}_0  }  \ar[d]  \ar@{=}[r] 
&  {  b^* \mathscr{D}(H)_0  }  \\
& {  a^*(\mathscr{J}^\perp / \mathscr{K} ) }    \ar@{=}[rr]& &{ b^*  ( \mathscr{J}^\perp / \mathscr{K} ) . }  
}
\]

Combining this diagram with Lemma \ref{lem:miracle filtration} and \eqref{MN filters}, and applying the same reasoning to $H^\vee$, we find that the canonical isomorphisms 
\[
\mathscr{D}(a^*H)_0 \iso \mathscr{D}(b^*H)_0  \quad\mbox{and}\quad \mathscr{D}(a^*H^\vee)_0 \iso \mathscr{D}(b^*H^\vee)_0 
\]
of Grothendieck-Messing theory respect Hodge filtrations.  By the duality between $\mathscr{D}(H)_1$ and $\mathscr{D}(H^\vee)_0$, under which the Hodge filtrations are annihilators of one another, the second of these implies that the isomorphism
\[
\mathscr{D}(a^*H)_1 \iso \mathscr{D}(b^*H)_1 
\]
also respects Hodge filtrations, and it follows that $a^*H \iso b^*H$.

It only remains to show that $a^*G \iso b^*G$, but this follows from $a^*H \iso b^*H$, exactly as in  the proof of Proposition \ref{prop:pi_0 isomorphism}.
\end{proof}

At last we complete the proof of Theorem \ref{thm:YtoDL}.
The morphism \eqref{YtoDLconstruction} is bijective on closed points, by comparing the definition of $\mathrm{DL}_\Lambda^k$ with Proposition \ref{prop:RZ tilde points}.
It is unramified by Lemma \ref{lem:YtoDLunr}.  
It is proper, as  the source is a projective  $\breve{\F}_p$-scheme by the same reasoning as in Remark \ref{rem:projectivity}.
We conclude now from  Lemma \ref{lem:general iso} that \eqref{YtoDLconstruction} is an isomorphism.
\end{proof}


\section{Fiber varieties  over Deligne-Lusztig varieties}
\label{s:fibration}


We continue to work with  a fixed  self-dual hermitian $\co_E$-lattice $\Lambda$ of rank $n\ge 2$, and a fixed $1\le k\le \lfloor n/2 \rfloor$, as in  \S \ref{s:enhanced}.  Using the morphism  
\[
\pi_1: \mathbf{RZ}^{\le k}_\Lambda \to   Y^k_\Lambda  
\]
from \eqref{augment diagram}, we will realize (the reduced scheme underlying) $\mathbf{RZ}^{\le k}_\Lambda$ as a closed subscheme of the relative Grassmannian parametrizing rank $k-1$ local direct summands of a  rank $2k-1$ vector bundle $\mathscr{V}$ on $Y^k_\Lambda$.


\subsection{A special vector bundle}
\label{ss:special bundle}


Recall the scheme $Y_\Lambda^k$ of \S \ref{ss:new moduli}, whose underlying reduced scheme
$Y_\Lambda^{k,\red} $ we have identified with a Deligne-Lusztig variety in Theorem \ref{thm:YtoDL}.

Our first goal is to construct a  filtered vector bundle 
\begin{equation}\label{Vfilter}
0 \subset \mathscr{V}^{(1)} \subset \mathscr{V}^{(k)} \subset \mathscr{V}
\end{equation}
 on $Y_\Lambda^{k,\red}$ from   the filtered vector bundles 
\begin{align*}
\mathscr{M} &=  \underline{\Hom}_{\co_E} ( \mathscr{D}(  \overline{\mathbb{Y}} ) ,  \mathscr{D}(H) ) \\\mathscr{N} & =   \underline{\Hom}_{\co_E} ( \mathscr{D}(  \overline{\mathbb{Y}} ) ,  \mathscr{D}(H^\vee) )
\end{align*}
used in the proof of Theorem \ref{thm:YtoDL}.
Here $H \to H^\vee$ is the isogeny of  $p$-divisible groups appearing in the universal diagram \eqref{Y tilde functor} over $Y^{k,\red}_\Lambda$.

\begin{remark}
Recall  from \eqref{MN untwist} that  a choice of $D(\overline{\mathbb{Y}}) \iso \co_{\breve{E}}$ determines isomorphisms
\[
\mathscr{M}_0 \iso \mathscr{D}(H)_0 \quad \mbox{and}\quad \mathscr{N}_0 \iso \mathscr{D}(H^\vee)_0,
\]
each of which identifies the subsheaves $\Fil^0$ on source and target.
\end{remark}

\begin{remark}\label{rem:filter is dual}
At a point $s \in Y_\Lambda^{k,\red}(\breve{\F}_p)$, corresponding to a pair of lattices $(M_0,N_0)$ under Proposition \ref{prop:RZ tilde points}, there are canonical identifications
\begin{align*}
\Fil^0 \mathscr{M}_{0,s} = \frac{N^*_0}{pM_0} &  \subset  \frac{M_0}{pM_0} = \mathscr{M}_{0,s}\\
\Fil^0 \mathscr{N}_{0,s} = \frac{M^*_0}{pN_0}   & \subset  \frac{N_0}{pN_0} = \mathscr{N}_{0,s}.
\end{align*}
This is just a reminder of Lemma \ref{lem:MN filter fibers}.
\end{remark}

Define coherent sheaves on $Y_\Lambda^{k,\red}$  by 
\begin{align}
\mathscr{V}  &= \mathrm{coker}( \Fil^0 \mathscr{M}_0 \to \Fil^0 \mathscr{N}_0 )  \label{VWdef}  \\
\mathscr{W}  & = \mathrm{coker}(  \mathscr{M}_0 \to  \mathscr{N}_0 ) \nonumber  \\
\mathscr{V}^{(1)} & = \ker(  \mathscr{V} \to \mathscr{W}   ) .  \nonumber
\end{align}
The  arrow $b^\vee$ in the universal diagram \eqref{Y tilde functor} determines  a tautological inclusion 
$
\Lambda \subset \Hom_{\co_E}( \overline{\mathbb{Y}} ,  H^\vee).
$
This induces a morphism of vector bundles
\[
\Lambda \otimes_{\Z_p} \co_{Y^{k,\red}_\Lambda}
\to \underline{\Hom}_{\co_E}(  \mathcal{D}(  \overline{\mathbb{Y}} ) , \mathcal{D} (H^\vee) )  ,
\]
which in turn restricts to a morphism
\[
\breve{\Lambda}_0 \otimes \co_{Y^{k,\red}_\Lambda}  \to \Fil^0 \mathscr{N}_0.
\]
This allows us to define 
\begin{equation}\label{Vk}
 \mathscr{V}^{(k)} = \mathrm{Im}  ( \breve{\Lambda}_0 \otimes \co_{Y^{k,\red}_\Lambda} \to \Fil^0 \mathscr{N} \to \mathscr{V} ).
\end{equation}

\begin{proposition}\label{prop:VWfibers}
At a point $s\in Y^{k,\red}_\Lambda(\breve{\F}_p)$, represented by  lattices $(M_0,N_0)$ as in Proposition \ref{prop:RZ tilde points},  there are canonical identifications
 \[
\mathscr{V}_s     = M_0^*/N_0^*
\quad \mbox{and} \quad 
\mathscr{W}_s    = N_0 / M_0 .
 \]
 Moreover, the fiber at $s$ of \eqref{Vfilter} is identified with
\[
0 \subset  \frac{M_0}{N_0^*} \subset   \frac{\breve{\Lambda}_0}{N_0^*} \subset  \frac{M_0^*}{N_0^*}  .
\]
 \end{proposition}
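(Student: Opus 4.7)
The plan is to compute the four fibers directly from the definitions of $\mathscr{V}$, $\mathscr{W}$, $\mathscr{V}^{(1)}$ and $\mathscr{V}^{(k)}$, using the explicit descriptions
\[
\mathscr{M}_{0,s} = M_0/pM_0, \quad \Fil^0 \mathscr{M}_{0,s} = N_0^*/pM_0,
\]
and the analogous formulas for $\mathscr{N}$, supplied by Remark \ref{rem:filter is dual} (equivalently Lemma \ref{lem:MN filter fibers}). The one additional input I will need is that the universal morphism $\mathscr{M}_0 \to \mathscr{N}_0$ on $Y_\Lambda^{k,\red}$, arising from the isogeny $d^\vee \circ d : H \to H^\vee$, specializes at $s$ to the reduction mod $p$ of the inclusion $M_0 \subset N_0$ of lattices in $\breve{\Lambda}_0[1/p]$. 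This follows directly from the characterization \eqref{MNlattices} of $M$ and $N$ as $\co_{\breve{E}}$-linear homomorphisms out of $D(\overline{\mathbb{Y}})$.

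Granting this, computing $\mathscr{V}_s$ and $\mathscr{W}_s$ reduces to taking cokernels of reductions mod $p$ of lattice inclusions. All the intersections and sums that arise collapse via the chain
\[
pN_0 \subset N_0^* \subset M_0 \subset M_0^* \subset N_0
\]
furnished by Proposition \ref{prop:RZ tilde points}; for instance, $pN_0 \cap M_0 = pN_0$ and $pN_0 \cap N_0^* = pN_0$. This yields $\mathscr{W}_s = N_0/M_0$ and $\mathscr{V}_s = M_0^*/N_0^*$. The induced map $\mathscr{V}_s \to \mathscr{W}_s$ is the quotient of the inclusion $M_0^* \subset N_0$, so its kernel is the image of $M_0$, namely $M_0/N_0^*$, proving the claim for $\mathscr{V}^{(1)}_s$.

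For $\mathscr{V}^{(k)}$, the tautological morphism $\breve{\Lambda}_0 \otimes \co_{Y^{k,\red}_\Lambda} \to \Fil^0 \mathscr{N}_0$ induced by $b^\vee$ is, by the same appeal to \eqref{MNlattices}, identified at $s$ with the inclusion $\breve{\Lambda}_0 \hookrightarrow M_0^*$ (valid by the chain) followed by reduction modulo $pN_0$.  Projecting onward to $\mathscr{V}_s = M_0^*/N_0^*$ gives image $(\breve{\Lambda}_0 + N_0^*)/N_0^* = \breve{\Lambda}_0/N_0^*$, using $N_0^* \subset \breve{\Lambda}_0$.  Combining the four formulas displays the fiber of \eqref{Vfilter} at $s$ as the claimed flag. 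The argument is essentially bookkeeping; the only point requiring attention is that Remark \ref{rem:filter is dual} and \eqref{MNlattices} each depend on a choice of trivialization $D(\overline{\mathbb{Y}}) \iso \co_{\breve{E}}$, but the compositions appearing in the statement are independent of this choice, exactly as in the proofs of Lemma \ref{lem:MN filter fibers} and Proposition \ref{prop:RZ tilde points}.
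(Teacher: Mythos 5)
Your proposal is correct and takes essentially the same approach as the paper, which simply states that the result is clear from Remark~\ref{rem:filter is dual} (i.e.~Lemma~\ref{lem:MN filter fibers}). Your version spells out the cokernel, kernel, and image computations in detail, using the lattice chain $pN_0\subset N_0^*\subset M_0\subset\breve{\Lambda}_0\subset M_0^*\subset N_0$ to collapse the intermediate sums and intersections, which is precisely the bookkeeping the paper leaves implicit.
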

 
 \begin{proof}
 This is clear from Remark  \ref{rem:filter is dual}.
 \end{proof}
 
 \begin{corollary}
 The coherent sheaves on $Y_\Lambda^{k,\red}$ defined above satisfying the following properties.
 \begin{enumerate}
 \item
 $\mathscr{V}$ and $\mathscr{W}$ are locally free of rank $2k-1$,
 \item
 $\mathscr{V}^{(1)}$ and $\mathscr{V}^{(k)}$ are local direct summands of $\mathscr{V}$ of ranks $1$ and $k$, respectively.
 \end{enumerate}
 \end{corollary}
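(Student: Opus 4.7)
The plan is to apply Lemma \ref{lem:bundle criterion} at each step, using that $Y_\Lambda^{k,\red}$ is reduced (indeed smooth, by Theorem \ref{thm:YtoDL} and Proposition \ref{prop:DLstructure}). Proposition \ref{prop:VWfibers} describes the fibers of $\mathscr{V}$, $\mathscr{W}$, $\mathscr{V}^{(1)}$, $\mathscr{V}^{(k)}$ in terms of subquotients of the lattices $(M_0,N_0)$ in Proposition \ref{prop:RZ tilde points}, and the colength data in that same proposition make these fiber dimensions uniform in $(M_0,N_0)$. In particular $\dim_{\breve{\F}_p}(M_0^*/N_0^*) = \dim_{\breve{\F}_p}(N_0/M_0) = 2k-1$, so Lemma \ref{lem:bundle criterion} immediately delivers that $\mathscr{V}$ and $\mathscr{W}$ are locally free of rank $2k-1$.

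For $\mathscr{V}^{(k)}$, I would argue indirectly by showing that the quotient $\mathscr{V}/\mathscr{V}^{(k)}$ is locally free. By construction $\mathscr{V}^{(k)}$ is the image of $\breve{\Lambda}_0 \otimes \co_{Y_\Lambda^{k,\red}} \to \mathscr{V}$, so there is a right-exact sequence
\[
\breve{\Lambda}_0 \otimes \co_{Y_\Lambda^{k,\red}} \to \mathscr{V} \to \mathscr{V}/\mathscr{V}^{(k)} \to 0.
\]
Passing to the fiber at $s=(M_0,N_0)$ and using Proposition \ref{prop:VWfibers}, the first map is identified with $\breve{\Lambda}_0/p\breve{\Lambda}_0 \to M_0^*/N_0^*$; since $p\breve{\Lambda}_0 \subset N_0^*$ and $\breve{\Lambda}_0 \subset M_0^*$, its image is $\breve{\Lambda}_0/N_0^*$ and its cokernel is $M_0^*/\breve{\Lambda}_0$, of constant dimension $k-1$. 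Lemma \ref{lem:bundle criterion} then gives that $\mathscr{V}/\mathscr{V}^{(k)}$ is locally free of rank $k-1$, and the surjection from the locally free sheaf $\mathscr{V}$ splits locally, realizing $\mathscr{V}^{(k)}$ as a local direct summand of $\mathscr{V}$ of rank $k$.

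For $\mathscr{V}^{(1)} = \ker(\mathscr{V} \to \mathscr{W})$ the same strategy applies to the cokernel $\mathscr{Q}$ of $\mathscr{V} \to \mathscr{W}$. By right-exactness, the induced map on the fiber at $s$ is $M_0^*/N_0^* \to N_0/M_0$ coming from the inclusion $M_0^* \subset N_0$, whose image is $M_0^*/M_0$ of dimension $2k-2$. Hence $\mathscr{Q}_s \iso N_0/M_0^*$ has constant dimension $1$, and Lemma \ref{lem:bundle criterion} gives that $\mathscr{Q}$ is locally free of rank $1$. It follows that the image sheaf $\mathrm{Im}(\mathscr{V}\to\mathscr{W})$ is a local direct summand of $\mathscr{W}$ of rank $2k-2$, hence locally free. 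The resulting surjection of locally free sheaves $\mathscr{V} \to \mathrm{Im}(\mathscr{V}\to\mathscr{W})$ then splits locally, realizing $\mathscr{V}^{(1)}$ as a local direct summand of $\mathscr{V}$ of rank $1$.

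The only delicacy in any of this is insisting on passing to cokernels (rather than images or kernels) when computing fibers, since right-exactness of $-\otimes_{\co_S} k(s)$ is what permits the fiber of a cokernel to be read off from the fibers of the preceding two terms; no further argument is needed once fiber dimensions are known to be uniform.
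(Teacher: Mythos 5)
Your proposal is correct and follows essentially the same strategy as the paper: compare fiber dimensions computed from Proposition \ref{prop:VWfibers} and Proposition \ref{prop:RZ tilde points} against Lemma \ref{lem:bundle criterion}, which applies because $Y_\Lambda^{k,\red}$ is reduced. The paper's own proof is terser — it simply applies Lemma \ref{lem:bundle criterion} to the cokernels of $\mathscr{V}^{(i)}\to\mathscr{V}$ — and for $i=1$ it implicitly relies on the fiber computation for $\mathscr{V}^{(1)}$ asserted in Proposition \ref{prop:VWfibers}, which is not literally a formal consequence of right-exactness since $\mathscr{V}^{(1)}$ is a kernel. You avoid that delicacy by first establishing that $\mathscr{Q}=\mathrm{coker}(\mathscr{V}\to\mathscr{W})$ is a line bundle, then deducing that $\mathrm{Im}(\mathscr{V}\to\mathscr{W})=\ker(\mathscr{W}\to\mathscr{Q})$ is a local direct summand, and finally that $\mathscr{V}^{(1)}$ is a local direct summand of $\mathscr{V}$ via the local splitting of the resulting surjection of vector bundles; this is slightly longer but airtight, and in particular it also \emph{proves} the fiber assertions for $\mathscr{V}^{(1)}$ and $\mathscr{V}^{(k)}$ rather than assuming them. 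Your remark at the end — that one should pass through cokernels, where right-exactness of $-\otimes k(s)$ applies, rather than kernels or images — is exactly the right point of caution.
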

 
 \begin{proof}
 Proposition \ref{prop:VWfibers} implies that $\mathscr{V}$ and $\mathscr{W}$ have constant fiber dimension $2k-1$, 
 and so the first claim follows from Lemma \ref{lem:bundle criterion}.
 The same reasoning shows  that the cokernel of $\mathscr{V}^{(i)} \to \mathscr{V}$ is locally free of rank $2k-1-i$, and so $\mathscr{V}^{(i)}$ is a local direct summand of rank $i$.
\end{proof}

We next endow  $\mathscr{V}$ with additional structure, derived from the pairing 
\[
b   : \breve{\Lambda}_0  \times \breve{\Lambda}_0 \to \breve{\Z}_p
\]
of \eqref{twisted pairing}.
 For any $\co_{Y^{k,\red}_\Lambda}$-module $\mathscr{F}$, denote by 
\begin{equation}\label{coherent frob twist}
\sigma^* \mathscr{F} = \co_{Y^{k,\red}_\Lambda} \otimes_{\sigma, \co_{Y^{k,\red}_\Lambda}} \mathscr{F}
\end{equation}
the pullback by the  $p$-power Frobenius  $\sigma:\co_{Y^{k,\red}_\Lambda} \to \co_{Y^{k,\red}_\Lambda}$.

\begin{proposition}\label{prop:beta def}
There is a  unique  $\co_{Y_\Lambda^{k,\red}}$-linear map
\[
\beta : \mathscr{V} \otimes \sigma^*\mathscr{V} \to \co_{Y_\Lambda^{k,\red}}
\]
that, when viewed as a pairing 
\[
\beta : \mathscr{V} \times \mathscr{V} \to \co_{Y_\Lambda^{k,\red}}
\]
(linear in the first variable and $\sigma$-linear in the second),  satisfies the following property:
At a point $s \in Y_\Lambda^{k,\red}(\breve{\F}_p)$,  represented by a pair of lattices $(M_0,N_0)$, 
 as in Proposition \ref{prop:RZ tilde points}, the fiber 
\[
\beta_s : \frac{M_0^*}{N_0^*} \times \frac{M_0^*}{N_0^*} \to \breve{\F}_p
\]
is identified with the reduction  of 
\[
pb : M_0^* \times M_0^* \to  \breve{\Z}_p .
\]
\end{proposition}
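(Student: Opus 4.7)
The plan is to first establish uniqueness by a general principle, then construct $\beta$ by descending an auxiliary pairing on $\Fil^0 \mathscr{N}_0$, and finally verify the fiber formula by direct computation.

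Uniqueness is essentially automatic. The scheme $Y_\Lambda^{k,\red}$ is reduced and of finite type over $\breve{\F}_p$, and $\mathscr{V}\otimes \sigma^*\mathscr{V}$ is a vector bundle on it (using Proposition \ref{prop:VWfibers} and Lemma \ref{lem:bundle criterion}). A morphism from a vector bundle on a reduced scheme to the structure sheaf is determined by its values on fibers at closed points, so the fiber description pins $\beta$ down.

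For existence, I would first globalize $b$ to a pairing $(\breve{\Lambda}_0 \otimes \co)\otimes \sigma^*(\breve{\Lambda}_0 \otimes \co)\to \co$ on trivial bundles, then transport it using the universal diagram. The isogeny $a^\vee : H^\vee \to p^{-1}\Lambda\otimes \overline{\mathbb{Y}}$ induces, after the trivialization $\mathscr{D}(\overline{\mathbb{Y}})\iso \co_{\breve{E}}$, a morphism of vector bundles $\mathscr{N}_0 \to p^{-1}\breve{\Lambda}_0 \otimes \co_{Y_\Lambda^{k,\red}}$, whose fiber at $s$ is the inclusion $N_0 \subset p^{-1}\breve{\Lambda}_0$ reduced mod $p$. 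Composing this with the globalized $b$ (rescaled by $p$ so that it lands in $\co$ on the image of the relevant subsheaf) gives a candidate auxiliary pairing
\[
\tilde\beta : \Fil^0 \mathscr{N}_0 \otimes \sigma^* \Fil^0 \mathscr{N}_0 \to \co_{Y_\Lambda^{k,\red}} .
\]
The right scaling is dictated by the fact that, fiberwise, $pM_0^* \subset M_0 \subset \breve{\Lambda}_0$ (readable off the lattice chain in Proposition \ref{prop:RZ tilde points}), which is what forces $pb(M_0^*,M_0^*)\subset \breve{\Z}_p$.

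The next step is to show that $\tilde\beta$ descends through the surjection $\Fil^0 \mathscr{N}_0 \twoheadrightarrow \mathscr{V}$ to define $\beta$ on $\mathscr{V}\otimes \sigma^*\mathscr{V}$. Since all sheaves involved are vector bundles on the reduced scheme $Y_\Lambda^{k,\red}$, the required vanishing when either variable lies in $\Fil^0 \mathscr{M}_0$ can be checked fiberwise. At a point $s$ corresponding to $(M_0,N_0)$, these vanishing conditions reduce to $b(N_0^*,M_0^*)\subset \breve{\Z}_p$ and $b(M_0^*,N_0^*)\subset \breve{\Z}_p$. The first follows from the inclusion $N_0^* \subset M_0$ combined with $b(M_0, M_0^*)\subset\breve{\Z}_p$ (the defining property \eqref{right dual} of the right dual); the second follows from $M_0^* \subset N_0$ combined with $b(N_0,N_0^*)\subset \breve{\Z}_p$. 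Both ambient inclusions are immediate consequences of the chain in Proposition \ref{prop:RZ tilde points}.

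The main obstacle I expect is correctly tracking the factor of $p$ in the construction of $\tilde\beta$: the pairing $b$ is natively $\breve{\Z}_p$-valued on $\breve{\Lambda}_0\times\breve{\Lambda}_0$, yet the fibers of $\Fil^0 \mathscr{N}_0$ are built from $M_0^*\supset \breve{\Lambda}_0$, so a naive pullback produces values in $p^{-1}\co$. One must use the relation $pM_0^*\subset\breve{\Lambda}_0$ globally, which in Dieudonné terms amounts to noting that multiplication by $p$ factors the composition $\Fil^0\mathscr{N}_0 \to \mathscr{N}_0 \to p^{-1}\breve{\Lambda}_0\otimes\co$ through the subbundle $\breve{\Lambda}_0\otimes\co$. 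Once this rescaling is set up correctly, the fiber formula is a direct unwinding of the definitions: for $x,y\in M_0^*$, the constructed pairing at $s$ evaluates to $b(px,y)\bmod p = pb(x,y)\bmod p$, giving the claimed identification.
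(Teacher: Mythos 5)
Your uniqueness argument is correct, and your descent argument (checking the two vanishing conditions $b(N_0^*,M_0^*)\subset\breve{\Z}_p$ and $b(M_0^*,N_0^*)\subset\breve{\Z}_p$ on fibers) is also sound. However, the core existence step has a genuine gap that I do not see how to repair within your framework.

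The problem is that the morphism $\mathscr{N}_0 \to p^{-1}\breve{\Lambda}_0 \otimes \co_{Y_\Lambda^{k,\red}}$ induced by $a^\vee$ has fiber $N_0/pN_0 \to p^{-1}\breve{\Lambda}_0/\breve{\Lambda}_0$, whose kernel contains $\breve{\Lambda}_0/pN_0$. So any construction passing through this trivial bundle factors, in the first variable, through $M_0^*/\breve{\Lambda}_0$. But the target pairing does \emph{not} factor through $M_0^*/\breve{\Lambda}_0$: by definition of the right dual \eqref{right dual}, $b(\breve{\Lambda}_0,x)\subset\breve{\Z}_p$ if and only if $x\in\breve{\Lambda}_0^*=\breve{\Lambda}_0$, so for $k>1$ (when $\breve{\Lambda}_0\subsetneq M_0^*$) we have $b(\breve{\Lambda}_0,M_0^*)\not\subset\breve{\Z}_p$, hence $pb(\breve{\Lambda}_0,M_0^*)\not\subset p\breve{\Z}_p$. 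Concretely, if you chase through the fibers of your candidate $\tilde\beta$, the distinction between ``$pb(x,y)$ reduced mod $p\breve{\Z}_p$'' (the target) and ``$pb(x,y)$ reduced mod $\breve{\Z}_p$'' (what the map to the trivial bundle remembers) matters, and since $pb(M_0^*,M_0^*)\subset\breve{\Z}_p$ the latter is identically zero. Rescaling by $p$ on either variable does not help: it only moves which quotient of $\breve{\Lambda}_0$ you end up on, and the nonvanishing of $\beta$ on $\mathscr{V}^{(k)}/\mathscr{V}^{(1)}$ (Proposition \ref{prop:beta isotropy}) shows the pairing is genuinely nonzero for $k>1$.

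The paper's proof avoids this by never mapping the second tensor factor into a trivial bundle. Instead, it uses the perfect pairing $\mathscr{N}_0\otimes\mathscr{M}_1\to\co$ coming from the principal polarizations on $\overline{\mathbb{Y}}$, $H$, $H^\vee$, precomposed with a nontrivial morphism $\sigma^*\Fil^0\mathscr{N}_0\to\mathscr{M}_1$ (Lemma \ref{lem:goof map}) whose fiber is $1\otimes x\mapsto p\Phi x$. That morphism is built from the Frobenius and Verschiebung on crystals together with the isogeny $\varpi:H^\vee\to H$; the key point is an exactness argument showing that the map $\sigma^*\mathscr{N}_0\to\mathscr{M}_1$ with fiber $p^2\Phi$ vanishes and hence factors through $\sigma^*\Fil^0\mathscr{N}_0$, producing a genuine division by $p$ that is invisible at the level of trivialized lattice reductions. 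This Dieudonn\'e-theoretic step is the ingredient your construction is missing, and without it one cannot produce a nonzero $\beta$ with the required fiber.
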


\begin{proof}
The uniqueness claim is clear, because we have specified the pairing on fibers.  The existence will follow from the next two lemmas.

\begin{lemma}\label{lem:goof map}
There is an $\co_{Y^\red_\Lambda}$-linear morphism
\[
\sigma^* \Fil^0 \mathscr{N}_0 \to \mathscr{M}_1
\]
whose fiber at any $s\in Y^{k,\red}_\Lambda(\breve{\F}_p)$ is identified with 
\[
 \sigma^* (   M_0^* /  p N_0  )  \map{ 1 \otimes x \mapsto p \Phi x }  M_1 / pM_1  .
\]
Here $M$ and $N$ are the lattices in $\breve{\Lambda}[1/p]$ associated to  $s$ as in \eqref{fake minuscule lattices}.
\end{lemma}

\begin{proof}
Over $Y^{k,\red}_\Lambda$ there is a universal diagram \eqref{Y tilde functor} of $p$-divisible groups.
By \eqref{Hpol}, there is a unique isogeny  $\varpi : H^\vee \to H$  making the diagram 
\[
\xymatrix{
{H^\vee}  \ar[dr]_p  \ar[r]^{\varpi}  & { H  }  \ar[d]^{ d^\vee \circ d}  \\
{    }   &  {   H^\vee  } 
}
\]
commute. This induces an $\co_E$-linear morphism of vector bundles
\[
  \mathscr{N}= \underline{\Hom}_{\co_E}  ( \mathscr{D}(\overline{\mathbb{Y}} ) , \mathscr{D}(H^\vee)  )
  \map{  \varpi  \circ }
   \underline{\Hom}_{\co_E}  ( \mathscr{D}(\overline{\mathbb{Y}} ) , \mathscr{D}(H)  )
   = \mathscr{M},
\]
whose fiber a point of $Y^{k,\red}_\Lambda(\breve{\F}_p)$ is identified with
\[
N / pN \map{p }  M/pM .
\]

Now consider the Frobenius   and Verscheibung morphisms
\[
\mathrm{Fr} :  H \to \sigma^*H
\quad \mbox{and}\quad \mathrm{Ver} : \sigma^* H\to H .
\]
These induce morphisms of vector bundles
\[
 \mathscr{D}( H) \map{ V=\mathscr{D}(  \mathrm{Fr})  }  \sigma^*  \mathscr{D}( H )
 \quad \mbox{and}\quad 
 \sigma^* \mathscr{D}( H ) \map{ F=\mathscr{D}(  \mathrm{Ver})  }   \mathscr{D}( H ),
\]
and similarly with $H$ replaced by $\overline{\mathbb{Y}}$ or $H^\vee$.  
We use this to define a morphism 
\[
\sigma^* \mathscr{N} = 
 \underline{\Hom}_{\co_E} ( \sigma^*\mathscr{D}(\overline{\mathbb{Y}} ) , \sigma^* \mathscr{D}(H^\vee)  ) 
 \map{\alpha}
  \underline{\Hom}_{\co_E}  ( \mathscr{D}(\overline{\mathbb{Y}} )  , \mathscr{D}(H^\vee) )
  = \mathscr{N}
\]
by $x \mapsto  F \circ x \circ V$.
The   fiber of $\alpha$ at a geometric point is identified with (the linearization of)
\[
p\Phi :  N  /pN  \to N / pN.
\]

The maps $\alpha$ and $\varpi \circ $ above sit in a commutative diagram
\[
\xymatrix{
{ \sigma^* \mathscr{N} =  \underline{\Hom}_{\co_E} ( \sigma^*\mathscr{D}(\overline{\mathbb{Y}} ) , \sigma^* \mathscr{D}(H^\vee) ) } \ar[r]^\alpha  \ar[d]_{ F \circ}
&
{  \underline{\Hom}_{\co_E} ( \mathscr{D}(\overline{\mathbb{Y}} ) ,  \mathscr{D}(H^\vee) ) = \mathscr{N} }  \ar[d]^{ \varpi  \circ }   \\
{  \underline{\Hom}_{\co_E} ( \sigma^*\mathscr{D}(\overline{\mathbb{Y}} ) ,   \mathscr{D}(H^\vee) ) } \ar[r]_\beta \ar[d]_{V\circ}
&
{  \underline{\Hom}_{\co_E} ( \mathscr{D}(\overline{\mathbb{Y}} ) ,  \mathscr{D}(H) )   = \mathscr{M} }   \\
{  \sigma^*\mathscr{N}= \underline{\Hom} _{\co_E}( \sigma^*\mathscr{D}(\overline{\mathbb{Y}} ) ,   \sigma^* \mathscr{D}(H^\vee) )  } 
}
\]
in  which the arrow labeled $\beta$ sends $x\mapsto \varpi \circ x \circ V$.
The vertical column on the left is exact, and the image of the final vertical arrow is 
\[
\sigma^* \Fil^0\mathscr{N} \subset \sigma^* \mathscr{N} .
\]
 On fibers, the  composition from the upper left corner to the lower right corner of the square is (the linearization of)
 \[
 N/pN \map{ p^2\Phi}  M/pM .
 \]
 This last map need not be $0$, but the relation $p N_0 \subset N_0^*=\Phi^{-1}M_1$ of Proposition \ref{prop:RZ tilde points} (and \eqref{fake dual shift}, for the equality) implies that its restriction
 \[
 N_0/pN_0 \map{ p^2\Phi}  M_1/pM_1
 \]
vanishes.  Thus the diagram above restricts to a commutative diagram of solid arrows
\[
\xymatrix{
{ \sigma^* \mathscr{N}_0 } \ar[rrr]^\alpha  \ar[d]_{ F \circ}   \ar[drrr]^0&   &  &   {   \mathscr{N}_1 }  \ar[d]^{ \varpi  \circ }   \\
{  \underline{\Hom} ( \sigma^*\mathscr{D}(\overline{\mathbb{Y}} )_0 ,   \mathscr{D}(H^\vee)_1 ) } \ar[rrr]_\beta  \ar[d]_{V\circ}
&  &   &   {  \mathscr{M}_1 }   \\
{  \sigma^* \Fil^0 \mathscr{N}_0 } \ar[d] \ar@{-->}[urrr]  \\
{ 0, }
}
\]
and the exactness of the vertical column on the left implies the existence of a unique dotted arrow making the diagram commute.  This dotted arrow is the morphism we seek.
\end{proof}

\begin{lemma}\label{lem:beta2}
There is a $\co_{Y^{k,\red}_\Lambda}$-linear map
\[
\mathscr{N}_0 \otimes \sigma^* \Fil^0 \mathscr{N}_0 \to \co_{Y^{k,\red}_\Lambda}
\]
whose fiber at any $s\in Y^{k,\red}_\Lambda(\breve{\F}_p)$ is identified with the $\breve{\F}_p$-linear map
\[
\frac{N_0}{pN_0} \otimes \sigma^*\left(    \frac{M_0^*}{pN_0} \right) \to \breve{\F}_p
\]
obtained by reducing $x\otimes (1\otimes y) \mapsto p b(x,y) \in \breve{\Z}_p$ for $x\in N_0$ and $y\in M_0^*$.
\end{lemma}

\begin{proof}
The polarization on  $\overline{\mathbb{Y}}$ induces  a perfect bilinear pairing on the constant vector bundle $\mathcal{D}(\overline{\mathbb{Y}})$, which induces an isomorphism
\[
\mathcal{D}(\overline{\mathbb{Y}})_0 \otimes  \mathcal{D}(\overline{\mathbb{Y}})_1 \iso  \co_{Y^{k,\red}_\Lambda}.
\]
Similarly, there is a  canonical perfect bilinear pairing
\[
\mathcal{D}( H^\vee )_0 \otimes \mathcal{D}(H)_1 \to   \co_{Y^{k,\red}_\Lambda}
\]
on the vector bundles associated to the universal $p$-divisible groups $H$ and $H^\vee$ over $Y^{k,\red}_\Lambda$.  
These induce a perfect bilinear pairing
\begin{align*}
\mathscr{N}_0  \otimes  \mathscr{M}_1
& =  \underline{\Hom}( \mathcal{D}(\overline{\mathbb{Y}})_0 , \mathcal{D}(H^\vee)_0) 
\otimes \underline{\Hom}( \mathcal{D}(\overline{\mathbb{Y}})_1 , \mathcal{D}(H)_1)  \\
& \iso  \underline{\Hom}( \mathcal{D}(\overline{\mathbb{Y}})_0 \otimes \mathcal{D}(\overline{\mathbb{Y}})_1 , \mathcal{D}(H^\vee)_0 \otimes  \mathcal{D}(H)_1)  \\
& \to \co_{Y^{k,\red}_\Lambda}.
\end{align*}
Pulling this  back via the morphism $\sigma^*\Fil^0 \mathscr{N}_0 \to \mathscr{M}_1$ of Lemma \ref{lem:goof map} yields a pairing $\mathscr{N}_0 \otimes \sigma^* \Fil^0 \mathscr{N}_0 \to \co_{Y^{k,\red}_\Lambda}$ having the desired form on  $\breve{\F}_p$-valued points.
\end{proof}

We  now complete the proof of Proposition \ref{prop:beta def}. 
By construction, there is a canonical map $\mathscr{M} \to \mathscr{N}$ respecting filtrations, and we claim that the  composition 
\[
\mathscr{M}_0 \otimes \sigma^* \Fil^0 \mathscr{M}_0 \to \mathscr{N}_0 \otimes \sigma^* \Fil^0 \mathscr{N}_0 \to \co_{Y^{k,\red}_\Lambda}
\]
is trivial. Indeed, taking  fibers  at a point $(M_0,N_0) \in Y_\Lambda^{k,\red}(\breve{\F}_p)$,  the composition is identified with the pairing
\[
\frac{M_0}{pM_0} \times \frac{N_0^*}{pM_0} \to  \frac{N_0}{pN_0} \times \frac{M_0^*}{pN_0}
 \map{pb( - , - )} \breve{\F}_p,
\]
which is trivial as
$
b(M_0,N_0^*) \subset b(M_0,M_0^*) \subset \breve{\Z}_p.
$

Recalling the definitions \eqref{VWdef}, the triviality of the above composition implies that  the morphism of Lemma \ref{lem:beta2}  descends to a morphism
\[
\mathscr{W} \otimes \sigma^* \mathscr{V} \to \co_{Y^{k,\red}_\Lambda}.
\]
The composition
\[
\mathscr{V} \otimes \sigma^*\mathscr{V} \to \mathscr{W} \otimes \sigma^*\mathscr{V} \to \co_{Y_\Lambda^{k,\red}}
\]
(the first arrow is the canonical map $\mathscr{V} \to \mathscr{W}$ on the first tensor factor, and the identity on the second)  
at last defines the desired pairing $\beta$. 
\end{proof}

Modulo the fact that the pairing $\beta$ is not bilinear, the following proposition essentially says that the radical of $\beta$ is $\mathscr{V}^{(1)}$, and
\[
\mathscr{V}^{(k)}/ \mathscr{V}^{(1)}  \subset  \mathscr{V}/ \mathscr{V}^{(1)}
\]
 is a maximal isotropic subbundle.

\begin{proposition}\label{prop:beta isotropy}
We have 
\[
\beta( \mathscr{V}^{(k)}  \otimes \sigma^*\mathscr{V}^{(k)} ) =0 \quad\mbox{and}\quad \beta(\mathscr{V}^{(1)} \otimes \sigma^* \mathscr{V})=0.
\] 
Moreover,  the induced map
\[
\mathscr{V}^{(k)}/ \mathscr{V}^{(1)}  \map{ v \mapsto ( w \mapsto \beta(v\otimes w) ) }  \underline{\Hom}( \sigma^*   \mathscr{V} /  \sigma^*\mathscr{V}^{(k)}   , \co_{ Y_\Lambda^{k,\red} } )
\]
is an isomorphism.
\end{proposition}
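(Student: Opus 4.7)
The plan is to reduce the three assertions to fiber-wise checks at closed points of $Y^{k,\red}_\Lambda$, using the explicit descriptions already in hand: Proposition \ref{prop:beta def} identifies $\beta_s$ with the reduction mod $p$ of the pairing $pb \colon M_0^* \times M_0^* \to \breve{\Z}_p$, while Proposition \ref{prop:VWfibers} gives $\mathscr{V}^{(1)}_s = M_0/N_0^* \subset \mathscr{V}^{(k)}_s = \breve{\Lambda}_0/N_0^* \subset \mathscr{V}_s = M_0^*/N_0^*$ at a point corresponding to lattices $(M_0, N_0)$ as in Proposition \ref{prop:RZ tilde points}.

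For the two vanishing statements, I would argue that the composed morphism of coherent sheaves into $\co_{Y^{k,\red}_\Lambda}$ vanishes identically because it vanishes at every closed point: its image is an ideal sheaf on the reduced, finite-type $\breve{\F}_p$-scheme $Y^{k,\red}_\Lambda$, and such a sheaf is forced to be zero when every fiber is zero. Fiber-wise the claims become $pb(\breve{\Lambda}_0, \breve{\Lambda}_0) \subset p\breve{\Z}_p$, which is the self-duality $\breve{\Lambda}_0 = \breve{\Lambda}_0^*$, and $pb(M_0, M_0^*) \subset p\breve{\Z}_p$, which is the very definition of the right dual $M_0^*$.

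For the perfectness, a rank count (using $\mathrm{rank}\,\mathscr{V} = 2k-1$, $\mathrm{rank}\,\mathscr{V}^{(k)} = k$, $\mathrm{rank}\,\mathscr{V}^{(1)} = 1$, and that $\sigma^*$ preserves rank) shows both the source $\mathscr{V}^{(k)}/\mathscr{V}^{(1)}$ and the target $\underline{\Hom}(\sigma^*\mathscr{V}/\sigma^*\mathscr{V}^{(k)}, \co_{Y^{k,\red}_\Lambda})$ are locally free of rank $k-1$. It therefore suffices to prove the map is an isomorphism on fibers, since by Nakayama applied to the cokernel this promotes to a global isomorphism. I would establish fiber-wise bijectivity via the transpose map: an element $1 \otimes \bar y \in \sigma^*(M_0^*/\breve{\Lambda}_0)$ lies in its kernel iff $b(\breve{\Lambda}_0, y) \subset \breve{\Z}_p$, i.e., iff $y \in \breve{\Lambda}_0^* = \breve{\Lambda}_0$, iff $\bar y = 0$ in $M_0^*/\breve{\Lambda}_0$. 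A dimension count in $\breve{\F}_p$-vector spaces then upgrades injectivity of the transpose to the desired bijection.

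The main subtlety worth flagging is that a direct attempt to verify injectivity of the forward map on fibers would require showing $\Phi^{-2}M_0 \cap \breve{\Lambda}_0 \subset M_0$, since $(M_0^*)^* = \Phi^{-2}M_0$ by the identity $L_0^{**} = \Phi^{-2}L_0$ of \eqref{twisted dual properties}. This is not evident from the lattice relations in Proposition \ref{prop:RZ tilde points}; in fact it is equivalent to $\Phi^2$-stability of $M_0$, which is not a given constraint of the moduli problem. Passing to the transpose and invoking only the much simpler self-duality $\breve{\Lambda}_0^* = \breve{\Lambda}_0$ sidesteps this issue cleanly, and this is the one bookkeeping choice that makes the argument go through without additional work.
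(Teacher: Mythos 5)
Your proof is correct and is essentially the paper's argument (the paper's own proof is the one-liner ``All claims can be checked on fibers, where they follow from Propositions \ref{prop:VWfibers} and \ref{prop:beta def}''); you have simply filled in the details, including the reducedness argument that lets one pass from fiberwise vanishing to global vanishing and the Nakayama/rank argument for the isomorphism.

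The one thing worth correcting is the ``subtlety'' you flag at the end: it is not actually an obstruction, and the forward map is injective on fibers by a direct computation. The kernel condition is $b(v, M_0^*)\subset\breve{\Z}_p$, which is the \emph{left} annihilator of $M_0^*$, not the right dual $M_0^{**}=\Phi^{-2}M_0$. Using $b(\Phi^2 x, y)=b(y,x)^\sigma$, one has
\[
\{\, v : b(v, M_0^*)\subset\breve{\Z}_p \,\} = (\Phi^2 M_0^*)^* \stackrel{\eqref{twisted dual properties}}{=} \Phi^2(M_0^{**}) = \Phi^2(\Phi^{-2}M_0) = M_0 ,
\]
so the kernel in $\breve{\Lambda}_0/M_0$ is zero without any hypothesis on $\Phi^2$-stability of $M_0$. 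Your transpose route is of course also valid (it amounts to checking that the right annihilator of $\breve{\Lambda}_0$ is $\breve{\Lambda}_0^*=\breve{\Lambda}_0$), so the proof as written goes through; the caveat is merely that the alternative you dismiss also works once one keeps track of the asymmetry of $b$.
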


\begin{proof}
All claims can be checked on fibers, where they follow from Propositions \ref{prop:VWfibers} and \ref{prop:beta def}.
\end{proof}

Recalling Theorem \ref{thm:YtoDL}, over  $Y^{k,\red}_\Lambda \iso \mathrm{DL}_\Lambda^k$ we have the universal flag 
\[
0 \stackrel{k-1}{\subset}  \mathscr{J} \stackrel{1}{\subset} \mathscr{K}^\perp 
\stackrel{n-2k}{\subset}      \mathscr{K} \stackrel{1}{\subset} \mathscr{J}^\perp \stackrel{k-1}{\subset}
 \breve{\Lambda}_0 \otimes \co_S
\]
of Definition \ref{def:non-minuscule DL}.
The relation between this flag and the filtered vector bundle $\mathscr{V}$
is a bit unclear.  
 It is not hard to see that there is a short exact sequence
\[
0 \to  \mathscr{V}^{(k)} \to \mathscr{V} \to \mathscr{J} \to 0,
\]
and  isomorphisms
\[
\mathscr{V}^{(1)} \iso  
\frac{  \mathscr{J}^\perp }{  \mathscr{K} } 
\quad\mbox{and} \quad    \mathscr{V}^{(k)} \iso \frac{ \breve{\Lambda}_0 \otimes \co_{Y^{k,\red}_\Lambda}}{  \mathscr{K} }.
\]
In particular,  $\mathscr{V}$ can be realized as an extension of two vector bundles that are each intrinsic to the Deligne-Lusztig variety $\mathrm{DL}_\Lambda^k$, in the sense that their construction does not rely on the isomorphism $Y^{k,\red}_\Lambda \iso \mathrm{DL}_\Lambda^k$.  It is not obvious to authors how one can express $\mathscr{V}$ itself in a way intrinsic to 
$\mathrm{DL}_\Lambda^k$.


\subsection{Analysis of $\pi_1$}


We can now make explicit the structure of the morphism
\[
 \pi_1 :  \mathbf{RZ}^{\le k}_\Lambda \to   Y^k_\Lambda ,
 \]
at least on the level of underlying reduced schemes, in terms of the morphism
$\beta : \mathscr{V} \otimes \sigma^*\mathscr{V} \to \co_{Y^{k,\red}_\Lambda}$ of Proposition \ref{prop:beta def}.
 
To this end, consider the  scheme
\[
R^{\le k}_\Lambda \to Y^{k,\red}_\Lambda
\]
 whose functor of points assigns to any $Y_\Lambda^{k,\red}$-scheme $S$ the set
\[
R^{\le k}_\Lambda(S) 
=
\left\{  
\begin{array}{c}  \mbox{rank $k-1$ local direct summands } \\  \mathscr{F} \subset \mathscr{V}_S  \mbox{ satisfying } \beta (\mathscr{F} \otimes \sigma^* \mathscr{F}) =0  \end{array}  \right\} .
\]
Denote by $R^k_\Lambda \subset R^{\le k}_\Lambda$ the open subscheme with functor of points 
\[
R^k_\Lambda(S) 
=
\{ 
\mathscr{F} \in R^{\le k}_\Lambda(S)  :  \mathscr{V}_S =  \mathscr{F} \oplus \mathscr{V}^{(k)}_S
\} ,
\]
where $\mathscr{V}^{(k)} \subset \mathscr{V}$ is the rank $k$ local direct summand defined by \eqref{Vk}.

\begin{proposition}\label{prop:Rstructure}
The morphism $R_\Lambda^k \to Y^{k,\red}_\Lambda$ is smooth of relative dimension $k-1$.
\end{proposition}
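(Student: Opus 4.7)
The strategy is to realize $R_\Lambda^k$ as the zero locus of a section of a vector bundle on a smooth ambient scheme over $Y_\Lambda^{k,\red}$, and to verify transversality at every closed point. Let $\mathrm{Gr}^\circ \to Y_\Lambda^{k,\red}$ denote the open subscheme of the relative Grassmannian $\mathrm{Gr}(k-1,\mathscr{V})$ parametrizing rank $k-1$ local direct summands of $\mathscr{V}$ complementary to $\mathscr{V}^{(k)}$. It is smooth of relative dimension $(k-1)\bigl((2k-1)-(k-1)\bigr)=k(k-1)$, and on it the pairing $\beta$ restricts on the universal rank $k-1$ summand $\mathscr{F}$ to an $\co$-linear map $\mathscr{F}\otimes\sigma^*\mathscr{F}\to\co_{\mathrm{Gr}^\circ}$, i.e., to a global section $s$ of the rank-$(k-1)^2$ vector bundle $\mathscr{F}^\vee\otimes(\sigma^*\mathscr{F})^\vee$, whose vanishing locus is by definition $R_\Lambda^k$. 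Modulo transversality this already yields the expected relative dimension $k(k-1)-(k-1)^2=k-1$.

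The crucial technical input is that $\sigma^*\tilde{\mathscr{F}}$ is canonically rigid under first-order deformations. Indeed, for any square-zero ideal $\mathcal{I}\subset A$ in an $\F_p$-algebra $A$, one has $\sigma(\mathcal{I})\subset\mathcal{I}^p\subset\mathcal{I}^2=0$ since $p\ge 2$; hence $\sigma:A\to A$ factors as $A\to A_0\map{\bar\sigma}A$ with $A_0=A/\mathcal{I}$, so for any lift $\tilde{\mathscr{F}}\subset\tilde{\mathscr{V}}$ of $\mathscr{F}_0\subset\mathscr{V}_0$ one has a canonical identification
\[
\sigma^*\tilde{\mathscr{F}} \iso \mathscr{F}_0\otimes_{A_0,\bar\sigma}A \subset \mathscr{V}_0\otimes_{A_0,\bar\sigma}A \iso \sigma^*\tilde{\mathscr{V}}
\]
depending only on $\mathscr{F}_0$. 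Consequently only the first factor of $\beta(\tilde{\mathscr{F}}\otimes\sigma^*\tilde{\mathscr{F}})$ varies to first order, and the derivative of $s$ at a point $\mathscr{F}_0\in R_\Lambda^k(\breve{\F}_p)$ is the linear map
\[
ds_{\mathscr{F}_0}:\Hom(\mathscr{F}_0,\mathscr{V}^{(k)}_0)\to\Hom(\mathscr{F}_0\otimes\sigma^*\mathscr{F}_0,\breve{\F}_p),\quad \psi\mapsto\bigl((x,\sigma^*y)\mapsto\beta(\psi(x),\sigma^*y)\bigr).
\]

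To conclude, I would show $ds_{\mathscr{F}_0}$ is surjective with kernel of rank $k-1$. The relation $\beta(\mathscr{V}^{(1)}\otimes\sigma^*\mathscr{V})=0$ of Proposition \ref{prop:beta isotropy} shows $ds_{\mathscr{F}_0}$ factors through $\Hom(\mathscr{F}_0,\mathscr{V}^{(k)}_0/\mathscr{V}^{(1)}_0)$; the resulting map is an isomorphism upon combining the perfect pairing $\mathscr{V}^{(k)}/\mathscr{V}^{(1)}\iso\underline{\Hom}(\sigma^*\mathscr{V}/\sigma^*\mathscr{V}^{(k)},\co)$ of the same proposition with the isomorphism $\sigma^*\mathscr{F}_0\iso\sigma^*\mathscr{V}_0/\sigma^*\mathscr{V}^{(k)}_0$ supplied by complementarity. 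Hence $ds_{\mathscr{F}_0}$ is surjective (transversality), and its kernel $\Hom(\mathscr{F}_0,\mathscr{V}^{(1)}_0)$ has the desired rank $k-1$, which simultaneously identifies the relative tangent space of $R_\Lambda^k\to Y_\Lambda^{k,\red}$ at $\mathscr{F}_0$. The main obstacle is isolating and justifying the infinitesimal Frobenius-constancy of $\sigma^*\tilde{\mathscr{F}}$; once this reduces the derivative to a linear map in $\psi$ alone, transversality is an immediate consequence of Proposition \ref{prop:beta isotropy}.
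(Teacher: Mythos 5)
Your proposal is correct and rests on the same two essential ingredients as the paper's proof: the observation that, over a square-zero thickening, Frobenius kills the ideal (so the $\sigma$-linear factor of $\beta$ does not vary to first order), and the nondegeneracy statement of Proposition \ref{prop:beta isotropy}, which supplies surjectivity of the linearized equation with kernel $\Hom(\mathscr{F}_0,\mathscr{V}^{(1)}_0)$. The packaging differs: the paper verifies the infinitesimal lifting criterion directly, proving a lemma that the set of lifts over a square-zero thickening is a (nonempty) torsor under $\Hom(F,IV^{(1)})$, while you realize $R_\Lambda^k$ as the zero locus of a section of the rank-$(k-1)^2$ bundle $\mathscr{F}^\vee\otimes(\sigma^*\mathscr{F})^\vee$ over the open relative Grassmannian $\mathrm{Gr}^\circ$ and check transversality fiberwise. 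Your route is perhaps more geometric and makes the dimension count $k(k-1)-(k-1)^2=k-1$ transparent before the transversality check; the paper's route is more self-contained (no appeal to the Jacobian criterion for zero loci) and produces the torsor structure on lifts explicitly. The arguments are interchangeable, and the heart of the matter—Frobenius rigidity of $\sigma^*\tilde{\mathscr{F}}$ plus Proposition \ref{prop:beta isotropy}—is identified correctly in both.
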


\begin{proof}
We will show that the morphism in question is formally smooth.  Suppose we have a commutative diagram of solid arrows
\[
\xymatrix{
{  \Spec(B/I) }  \ar[r]   \ar[d]   & {  R_\Lambda^k  }  \ar[d]   \\
{ \Spec( B ) }  \ar[r] \ar@{-->}[ru]   &  {  Y^{k,\red}_\Lambda  } 
}
\]
in which the left vertical arrow is a square-zero thickening of affine $\breve{\F}_p$-schemes.  We must show that, Zariski locally on $\Spec( B)$, there exists a dotted arrow making the diagram commute.

The  pullbacks to $\Spec( B )$ of the vector bundles 
$
\mathscr{V}^{(1)} \subset \mathscr{V}^{(k)} \subset \mathscr{V}
$ 
on $Y_\Lambda^{k,\red}$ correspond to inclusions of locally  free $B$-modules
\[
V^{(1)} \subset V^{(k)} \subset V,
\] 
with $V$ endowed with a function 
$
\beta : V \times V \to B
$
that is $B$-linear in the first variable and $\sigma$-linear in second.
The top horizontal arrow in the diagram corresponds to a choice of complementary summand 
\[
F \subset V/IV
\]
  to $V^{(k)}/ I V^{(k)}$,  satisfying the isotropy condition $\beta( F , F) =0$.

\begin{lemma}
The set of dotted arrows making the diagram commute admits a simply transitive action of  (the additive group underlying) the $B/I$-module $\Hom(F, I V^{(1)})$.
\end{lemma}

\begin{proof}
Consider  the set  $\mathcal{X}$ of all lifts of $F$ to $B$-submodules $F' \subset V$ satisfying 
\[
V = F' \oplus V^{(k)}  
\]
(with no isotropy condition on $F'$). By standard arguments, this
is a principal homogenous space under  $\Hom(F, I V^{(k)}  )$, where 
the action is defined as follows: given a point $F' \in \mathcal{X}$ and a $\phi : F \to I V^{(k)}$, we let $\phi'$ denote the composition
\[
F' \to F'/IF' = F \map{\phi} I V^{(k)},
\]
and define
\[
\phi+ F' = \{ x-\phi'(x) : x\in F' \}.
\]

The set of dotted arrow making the diagram commute is in bijection with the set of all $F' \in \mathcal{X}$ that satisfy the isotropy condition $\beta(F',F')=0$.
Let us now fix one $F' \in \mathcal{X}$, and parametrize those $\phi \in \Hom(F, I V^{(k)}  )$ for which $\phi+F'$ is isotropic.

To say that $\phi + F'$ is isotropic is equivalent to saying that 
\[
\beta(  x -\phi'(x)  ,  y-\phi'(y) )=0
\]
 for all $x,y\in F'$.   By assumption $I^2= 0$, and so the $\sigma$-linearity of $\beta$ in the second variable implies 
\[
\beta( F' , \phi'(F')) \subset \beta(F', IV^{(k)}) = I^p \beta(F',  V^{(k)}) =0.
\]
The isotropy condition on $\phi+F'$ therefore simplifies to 
\begin{equation}\label{isotropy lift}
\beta(x,y) = \beta( \phi'(x) , y) 
\end{equation}
for all $x,y\in F'$.

Proposition \ref{prop:beta isotropy} tells us that the natural map
\[
V^{(k)}/ V^{(1)}  \map{ x \mapsto ( y \mapsto \beta(x, y) ) } 
 \Hom_\sigma(  V /  V^{(k)}   , B )
 =  \Hom_\sigma(  F'  , B )
\]
is an isomorphism, where $\Hom_\sigma$ means $\sigma$-linear homomorphisms of $B$-modules.
In particular, there is a unique homomorphism
\[
\psi' : F' \to  V^{(k)}/ V^{(1)} 
\]
satisfying
$
\beta(x,y) =\beta(\psi'(x),y)
$
for all $x,y\in F'$.  Using the isotropy of the original $F \subset V$, we see that $\psi'$ takes values in 
$IV^{(k)}/IV^{(1)}$, and hence factors through a morphism
\[
F= F'/IF'  \map{\psi} IV^{(k)}/IV^{(1)}.
\]

What we have shown is that the $\phi \in \Hom(F, I V^{(k)}  )$ for which $\phi+F'$ is isotropic are precisely those that satisfy \eqref{isotropy lift}, and these are precisely the $\phi$ that lift $\psi \in \Hom(F, I V^{(k)} /IV^{(1)} )$.  This is clearly a principal homogeneous space under 
$\Hom(F,  IV^{(1)} )$.
\end{proof}

The lemma shows first that the set of dotted arrows making the diagram commute is nonempty, and hence 
 $R_\Lambda^k \to Y^{k,\red}_\Lambda$ is formally smooth.  
 
 By taking $B=\breve{\F}_p[\epsilon]$ to be the ring of dual numbers, the lemma also implies that the relative tangent space to this morphism at a point $s\in  R_\Lambda^k (\breve{\F}_p)$, corresponding to an isotropic direct summand $\mathscr{F}_s \subset \mathscr{V}_s$, is isomorphic to the $\breve{\F}_p$-vector space $\Hom( \mathscr{F}_s,  \mathscr{V}^{(1)}_s)$ of dimension $k-1$.  Hence all  closed fibers of the morphism in question have dimension $k-1$.
\end{proof}

\begin{theorem}\label{thm:enhanced structure}
There is a canonical isomorphism of $Y_\Lambda^{k,\red}$-schemes
\[
\mathbf{RZ}_\Lambda^{ \le k,\red} \iso R^{\le k}_\Lambda,
\]
restricting to an isomorphism
 $
 \mathbf{RZ}^{k,\red}_\Lambda \iso R^{k}_\Lambda .
 $
\end{theorem}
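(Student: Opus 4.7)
The plan is to construct a canonical morphism $\phi \colon \mathbf{RZ}_\Lambda^{\le k,\red} \to R_\Lambda^{\le k}$ of $Y_\Lambda^{k,\red}$-schemes via Grothendieck-Messing, verify it is bijective on closed points, show it is formally unramified, deduce via Lemma~\ref{lem:general iso} that it is a closed immersion identifying the source with $(R_\Lambda^{\le k})^{\red}$, and finally upgrade this to an isomorphism of schemes. To construct $\phi$, the universal isogenies $c \colon H \to X$ and $c^\vee \colon X \to H^\vee$ from \eqref{RZ tilde functor} give, via Grothendieck-Messing, a filtered chain $\mathscr{D}(H) \to \mathscr{D}(X) \to \mathscr{D}(H^\vee)$; setting $\mathscr{L}_0 := \underline{\Hom}_{\co_E}(\mathscr{D}(\overline{\mathbb{Y}}), \mathscr{D}(X))_0$ and letting $\mathscr{F}$ be the image of $\Fil^0 \mathscr{L}_0$ in $\mathscr{V} = \Fil^0 \mathscr{N}_0/\Fil^0 \mathscr{M}_0$, a fiber computation based on Proposition~\ref{prop:RZ tilde points} and Corollary~\ref{cor:half hyperspecial} identifies $\mathscr{F}_s = L_0^*/N_0^* \subset M_0^*/N_0^* = \mathscr{V}_s$, of constant rank $k-1$, which is therefore a local direct summand by Lemma~\ref{lem:bundle criterion}. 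Isotropy $\beta(\mathscr{F} \otimes \sigma^*\mathscr{F}) = 0$ holds fiberwise by Proposition~\ref{prop:beta def}, since dualizing the defining relation $L_0^* \subset L_0$ on $\mathrm{RZ}_\Lambda$ gives $b(L_0^*, L_0^*) \subset \breve{\Z}_p$. On closed points $\phi$ becomes $L_0 \mapsto L_0^*/N_0^*$, and the inverse sends an isotropic rank $k-1$ direct summand, lifted to a lattice $\widetilde{L}$ between $N_0^*$ and $M_0^*$, to $L_0 := \Phi^2 \widetilde{L}^*$; the identities \eqref{twisted dual properties} translate isotropy $\widetilde{L} \subset \widetilde{L}^*$ into $L_0^* \subset L_0$, and the lattice conditions of Proposition~\ref{prop:RZ tilde points} are matched by dualizing.

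Next, I would prove $\phi$ is formally unramified by a Grothendieck-Messing argument in the spirit of Lemma~\ref{lem:YtoDLunr}. Given two deformations of a closed point of $\mathbf{RZ}_\Lambda^{\le k, \red}$ to $\Spec \breve{\F}_p[\epsilon]$ whose images in $R_\Lambda^{\le k}$ (and hence in $Y_\Lambda^{k,\red}$) agree, Proposition~\ref{prop:pi_0 isomorphism} reduces the problem to showing that the two resulting deformations of $X$ are isomorphic. Mirroring Lemma~\ref{lem:miracle filtration}, the Hodge filtration $\Fil^0 \mathscr{L}_0 \subset \mathscr{L}_0$ is canonically recovered from $\mathscr{F}$ together with the filtered ambient bundle $(\mathscr{M}_0, \Fil^0 \mathscr{N}_0)$ on $Y_\Lambda^{k,\red}$: fiberwise, $\Fil^0 \mathscr{L}_{0,s} = L_0^*/pL_0 \subset L_0/pL_0$ is pinned down by its image $\mathscr{F}_s = L_0^*/N_0^*$ in $\mathscr{V}_s$ together with its image $L_0^*/M_0$ in $\mathscr{W}_s = N_0/M_0$. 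Duality via the polarization then forces the agreement of $\Fil^0 \mathscr{D}(X)_1$, and Grothendieck-Messing concludes that the two deformations of $X$ coincide. Combined with bijectivity on closed points and the projectivity of both sides over $Y_\Lambda^{k,\red}$ (for $R_\Lambda^{\le k}$ via the relative Grassmannian, for $\mathbf{RZ}_\Lambda^{\le k}$ by Remark~\ref{rem:projectivity}), Lemma~\ref{lem:general iso}(1) yields that $\phi$ is a closed immersion identifying $\mathbf{RZ}_\Lambda^{\le k, \red}$ with $(R_\Lambda^{\le k})^{\red}$.

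The hard part, and the main obstacle, is upgrading this closed immersion to a scheme-theoretic isomorphism, i.e., showing that $R_\Lambda^{\le k}$ is already reduced. This is not automatic because the defining equations $\beta(\mathscr{F} \otimes \sigma^*\mathscr{F}) = 0$ involve Frobenius twists and could in principle introduce nilpotents. My plan is to establish reducedness by proving $\phi$ is in fact formally smooth of relative dimension zero: given an infinitesimal deformation of $\mathscr{F}$ in a square-zero thickening over $Y_\Lambda^{k,\red}$, Grothendieck-Messing produces a unique lift of $X$ via the same recovery of $\Fil^0 \mathscr{D}(X)$ from $\mathscr{F}$ and the ambient bundles, and the isogenies $c$, $c^\vee$ together with the polarization lift canonically because $H$, $H^\vee$, and the pairing are already given over the larger ring. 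The $\epsilon^p = 0$ linearization of the Frobenius-twisted isotropy equation that plays a central role in the proof of Proposition~\ref{prop:Rstructure} is exactly what makes this lifting obstruction-free. Formal smoothness combined with the closed immersion of the preceding paragraph then forces $\phi$ to be an isomorphism of schemes. The restricted isomorphism $\mathbf{RZ}_\Lambda^{k,\red} \iso R_\Lambda^k$ follows by matching open conditions: dualizing $M_0 = L_0 \cap \breve{\Lambda}_0$ and $N_0 = L_0 + \breve{\Lambda}_0$ from \eqref{this MN} yields $L_0^* + \breve{\Lambda}_0 = M_0^*$ and $L_0^* \cap \breve{\Lambda}_0 = N_0^*$, which, after identifying $\mathscr{V}^{(k)}_s = \breve{\Lambda}_0/N_0^*$ via Proposition~\ref{prop:VWfibers}, are precisely the conditions expressing $\mathscr{V} = \mathscr{F} \oplus \mathscr{V}^{(k)}$.
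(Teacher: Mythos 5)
Your overall strategy matches the paper's proof closely: construct $\phi$ from the image of $\Fil^0\mathscr{L}_0$ in $\mathscr{V}$, check bijectivity on $\breve{\F}_p$-points, prove formal unramifiedness via Grothendieck--Messing, and invoke Lemma~\ref{lem:general iso}. But two steps have genuine problems.

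In the unramifiedness argument, your fiber-level claim that $\Fil^0\mathscr{L}_{0,s}=L_0^*/pL_0$ has image $L_0^*/M_0$ in $\mathscr{W}_s=N_0/M_0$ presupposes $M_0\subset L_0^*$, and this fails in general: already for $k=1$ the chain of Proposition~\ref{prop:RZ tilde points} forces $M_0=\breve{\Lambda}_0$ while $L_0^*=N_0^*\subsetneq M_0$. The correct image is $(L_0^*+M_0)/M_0$, and the pair of images in $\mathscr{V}_s$ and $\mathscr{W}_s$ does not cleanly pin down $\Fil^0\mathscr{L}_{0,s}\subset\mathscr{L}_{0,s}$ as a preimage. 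The paper's device is the auxiliary local direct summand $\mathscr{H}:=\ker(\Fil^0\mathscr{N}_0\to\mathscr{V}/\mathscr{F})$, with fiber $L_0^*/pN_0$, together with the single identity $\Fil^0\mathscr{L}_0=\ker(\mathscr{L}_0\to\mathscr{N}_0/\mathscr{H})$, whose fiber check is the trivial fact that $\ker(L_0/pL_0\to N_0/L_0^*)=L_0^*/pL_0$; one really needs this formulation, which canonically recovers the Hodge filtration of $X$ from $\mathscr{F}$ plus the ambient data, to run the Grothendieck--Messing rigidity argument.

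Your third paragraph introduces a formal-smoothness argument that the paper does not make, and it has a gap. You are right that Lemma~\ref{lem:general iso}(1) by itself only yields a closed immersion onto the reduced subscheme of $R_\Lambda^{\le k}$, and Lemma~\ref{lem:general iso}(2) would require knowing $R_\Lambda^{\le k}$ is reduced. However, the $\epsilon^p=0$ linearization appearing in the proof of Proposition~\ref{prop:Rstructure} concerns deforming $\mathscr{F}$ inside the Grassmannian, which is a different question. If you feed a deformation $\widetilde{\mathscr{F}}$ into the Grothendieck--Messing recipe you produce a deformation of $X$ (hence a point of $\mathbf{RZ}_\Lambda^{\le k}(\widetilde{S})$), but the formal smoothness criterion for $\phi$ demands a lift landing in the \emph{reduced} subscheme $\mathbf{RZ}_\Lambda^{\le k,\red}$, and nothing in your sketch explains why the lift factors through it. As written the argument is circular: you would need $\mathbf{RZ}_\Lambda^{\le k}$ reduced to conclude this, which is essentially what you were trying to show. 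What the paper ultimately uses downstream (Corollary~\ref{cor:grassmannian}) is only the restricted isomorphism $\mathbf{RZ}_\Lambda^{k,\red}\iso R_\Lambda^k$; there reducedness of $R_\Lambda^k$ is available for free from Proposition~\ref{prop:Rstructure}, since a scheme smooth over the smooth base $Y_\Lambda^{k,\red}$ is itself reduced, and the closed immersion restricted to the open stratum then upgrades to an isomorphism by Lemma~\ref{lem:general iso}(2).
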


\begin{proof}
Over $\mathbf{RZ}_\Lambda^{\le k, \red}$, the universal diagram \eqref{RZ tilde functor} determines morphisms of filtered vector bundles
\[
\mathscr{D}(H) \to \mathscr{D}(X) \to \mathscr{D}(H^\vee).
\]
These induce morphisms 
$
 \mathscr{M}_0 \to \mathscr{L}_0 \to \mathscr{N}_0
$
between the filtered vector bundles from  \eqref{the L bundle} and the proof of Theorem \ref{thm:YtoDL}.

Recall from  \eqref{VWdef} that $\mathscr{V}$ is a quotient of $\Fil^0 \mathscr{N}_0$.
The key step of the proof is to note that the coherent sheaf
\[
\mathscr{F} \define \mathrm{Image}( \Fil^0 \mathscr{L}_0 \to  \Fil^0 \mathscr{N}_0 \to   \mathscr{V} )
\]
on $\mathbf{RZ}_\Lambda^{\le k, \red}$ is a rank $k-1$  local direct summand of $\mathscr{V}$ satisfying $b(\mathscr{F} \otimes  \sigma^*\mathscr{F} ) =0$, while  the coherent sheaf
\[
\mathscr{H} \define \ker(  \Fil^0 \mathscr{N}_0 \to \mathscr{V} / \mathscr{F}) 
\]
is a local direct summand of $ \Fil^0 \mathscr{N}_0$ satisfying 
\begin{equation}\label{magic hodge}
\Fil^0 \mathscr{L}_0 = \ker( \mathscr{L}_0 \to \mathscr{N}_0 / \mathscr{H} ) .
\end{equation}
All of these claims  can be verified on fibers at $\breve{\F}_p$-valued points, where (under the identifications of Proposition \ref{prop:RZ tilde points}) we have
\begin{align*}
\Fil^0 \mathscr{L}_{0,s}=  L_0^* / pL_0  &  \subset  L_0/pL_0 = \mathscr{L}_{0,s}  \\
\mathscr{F}_s=  L_0^* / N_0^*  &  \subset  M_0^*/N_0^*  = \mathscr{V}_s  \\
\mathscr{H}_s=  L_0^* / p N_0  &  \subset  M_0^*/p N_0  = \Fil^0 \mathscr{N}_{0,s}.
\end{align*}
In particular,  the isotropic subbundle $\mathscr{F} \subset \mathscr{V}$ just constructed   defines a morphism
\begin{equation}\label{RZtilde is grass}
\mathbf{RZ}_\Lambda^{ \le k, \red }  \to R^{\le k}_\Lambda,
\end{equation}
which is bijective on $\breve{\F}_p$-valued points.

We will use the auxiliary vector bundle $\mathscr{H} \subset \Fil^0 \mathscr{N}_0$ to show  that the morphism we have constructed is formally unramified.  To this end,  assume  we have a commutative diagram
\[
\xymatrix{
{ S }  \ar[r] \ar[d] &  { \mathbf{RZ}_\Lambda^{\le k,\red}  }  \ar[d]  \\
{  \widetilde{S} }  \ar[r] \ar@{-->}[ur] & { R_\Lambda   } 
}
\]
of solid arrows, in which the left vertical arrow is a square-zero thickening.
To show that our morphism is formally unramified we must show that there is at most one dotted arrow making the diagram commute, so suppose we have two such arrows
$
a,b : \widetilde{S} \to \mathbf{RZ}_\Lambda^{ \le k,\red} .
$

Because $a$ and $b$ define the same $\widetilde{S}$-point of $R_\Lambda$, there are  canonical identifications (respecting filtrations)
\[
\xymatrix{
{ a^*\mathscr{M}_0 } \ar[r]  \ar@{=}[d]  & {  a^*\mathscr{N}_0 }  \ar[r]   \ar@{=}[d] & {  a^*\mathscr{V} } \ar[r]  \ar@{=}[d] & 0  \\ 
{ b^*\mathscr{M}_0 } \ar[r] & {  b^*\mathscr{N}_0 }  \ar[r]  & {  b^*\mathscr{V} }  \ar[r]& 0  ,
}
\]
 of vector bundles on $\widetilde{S}$, the last of which
 identifies $a^*\mathscr{F} = b^* \mathscr{F}$.  It follows that there are canonical identifications 
 \[
\xymatrix{
{ a^*\mathscr{H}} \ar[r]  \ar@{=}[d]  & {  a^* \Fil^0\mathscr{N}_0 }  \ar[r]   \ar@{=}[d] & {  a^*(\mathscr{V}/\mathscr{F} )} \ar[r]  \ar@{=}[d] & 0  \\ 
{ b^*\mathscr{H} } \ar[r] & {  b^*\Fil^0\mathscr{N}_0 }  \ar[r]  & {  b^*(\mathscr{V} / \mathscr{F})}  \ar[r]& 0  
}
\]
 of vector bundles on $\widetilde{S}$.

Now consider the $p$-divisible groups $a^*X$ and $b^*X$ over $\widetilde{S}$.
We do not yet know that these are isomorphic, but  they  are deformations of the same $p$-divisible group over $S$.  By Grothendieck-Messing theory there is a canonical isomorphism 
\begin{equation}\label{magic hodge strong}
a^* \mathscr{D}(X) = \mathscr{D}(a^*X) \iso \mathscr{D}(b^*X)=b^* \mathscr{D}(X) 
\end{equation}
of vector bundles on $\widetilde{S}$.
By Remark \ref{rem:untwist L}  this induces an isomorphism $a^*\mathscr{L} \iso b^*\mathscr{L}$, 
and we now have canonical identifications
\[
\xymatrix{
 {  a^* \mathscr{L}_0 }  \ar[r]   \ar@{=}[d] & {  a^*(\mathscr{N}_0/\mathscr{H} )}  \ar@{=}[d]   \\ 
 {  b^*\mathscr{L}_0 }  \ar[r]  & {  b^*(\mathscr{N}_0/\mathscr{H} )}  
}
\]
of vector bundles on $\widetilde{S}$.
It  follows from \eqref{magic hodge} that the isomorphism on the left identifies $a^*\Fil^0 \mathscr{L}_0 = b^*\Fil^0 \mathscr{L}_0$.  
As with \eqref{MN filters}, the  isomorphism of Remark \ref{rem:untwist L} identifies the $\Fil^0$ on source and target, and hence  \eqref{magic hodge strong} identifies
\[
a^* \Fil^0 \mathscr{D}(X)_0 = b^* \Fil^0 \mathscr{D}(X)_0.
\]

As the principal polarization on $X$ induces a perfect bilinear pairing between $\mathscr{D}(X)_0$ and $\mathscr{D}(X)_1$,  under which  the Hodge filtrations are exact annihilators of each other, the same identification holds if we replace $\mathscr{D}(X)_0$ with $\mathscr{D}(X)_1$.  
In other words, \eqref{magic hodge strong} respects Hodge filtrations, and so $a^*X\iso b^*X$ by Grothendieck-Messing theory.
From this it follows easily that $a=b$.

The  unramified morphism \eqref{RZtilde is grass} is proper (its source  is projective over $\breve{\F}_p$) and bijective on closed points, so  is an isomorphism by Lemma \ref{lem:general iso}. 

For the final claim, fix a point 
\[
s\in \mathbf{RZ}_\Lambda^{\le k,\red}(\breve{\F}_p) \iso R_\Lambda^{\le k}(\breve{\F}_p)
\]
 represented by a triple $(L_0,M_0,N_0)$ as in Proposition \ref{prop:RZ tilde points}.  
 In particular,
 \begin{equation}\label{recover R stratum}
 L_0 \cap \breve{\Lambda}_0  \subset  M_0 \stackrel{k-1}{\subset} \breve{\Lambda}_0.
 \end{equation}
 
By definition,  $s \in R_\Lambda^k(\breve{\F}_p)$ holds if and only if equality holds in
\[
 \mathscr{F}_s + \mathscr{V}_s^{(k)} = \frac{L_0^*+ \breve{\Lambda}_0}{N_0^*} \subset \frac{M_0^*}{N_0^*} = \mathscr{V}_s.
\]
 Dualizing, this is equivalent to $L_0 \cap \breve{\Lambda}_0 = M_0$, which is equivalent to  
\[
L_0 \cap \breve{\Lambda}_0   \stackrel{k-1}{\subset} \breve{\Lambda}_0
\]
by \eqref{recover R stratum}, which is equivalent to  $s \in \mathbf{RZ}_\Lambda^k(\breve{\F}_p)$ by 
Proposition \ref{prop:strata points} and the final claim of Proposition \ref{prop:RZ tilde points}.
\end{proof}

\begin{corollary}\label{cor:grassmannian}
There is a  smooth morphism
\[
\mathrm{RZ}_\Lambda^{k,\red} \iso R^k_\Lambda  \to Y_\Lambda^{k,\red}\iso \mathrm{DL}_\Lambda^k
\]
of relative dimension $k-1$.  In particular, $\mathrm{RZ}_\Lambda^{k,\red}$ is itself smooth of dimension $n-2$.
\end{corollary}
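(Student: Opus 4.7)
The plan is to assemble the corollary from the structural results already established, so the work here is largely bookkeeping. First I would observe that Proposition \ref{prop:pi_0 isomorphism} gives an isomorphism $\mathrm{RZ}_\Lambda^{k,\red} \iso \mathbf{RZ}_\Lambda^{k,\red}$ (since $\pi_0$ is a closed immersion that is an isomorphism on underlying reduced schemes, its restriction to the open subscheme $\mathrm{RZ}_\Lambda^k \subset \mathrm{RZ}_\Lambda^{\le k}$ and its preimage $\mathbf{RZ}_\Lambda^k$ retains this property). Composing with the isomorphism $\mathbf{RZ}_\Lambda^{k,\red} \iso R_\Lambda^k$ of Theorem \ref{thm:enhanced structure}, I obtain the leftmost identification in the statement.

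Next, Theorem \ref{thm:YtoDL} supplies the rightmost identification $Y_\Lambda^{k,\red} \iso \mathrm{DL}_\Lambda^k$, and the structural morphism $R_\Lambda^k \to Y_\Lambda^{k,\red}$ built into the definition of $R_\Lambda^k$ furnishes the desired morphism. Proposition \ref{prop:Rstructure} asserts that this morphism is smooth of relative dimension $k-1$, which is the first half of the corollary.

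For the dimension count, I would invoke Proposition \ref{prop:DLstructure}, which tells us that $\mathrm{DL}_\Lambda^k$ is smooth of dimension $n-k-1$. Since smoothness is preserved under composition, the composite morphism $\mathrm{RZ}_\Lambda^{k,\red} \to \Spec(\breve{\F}_p)$ is smooth, and its relative dimension is the sum $(k-1)+(n-k-1) = n-2$. Hence $\mathrm{RZ}_\Lambda^{k,\red}$ is smooth of dimension $n-2$, completing the proof.

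There is really no hard step here, since the corollary is just the concatenation of previously established isomorphisms together with two smoothness results. The only tiny point requiring a line of justification is that the isomorphism $\mathrm{RZ}_\Lambda^{k,\red} \iso \mathbf{RZ}_\Lambda^{k,\red}$ holds on the $k$-stratum rather than merely on $\mathrm{RZ}_\Lambda^{\le k,\red}$; this is immediate, because $\mathbf{RZ}_\Lambda^k$ was defined precisely as $\pi_0^{-1}(\mathrm{RZ}_\Lambda^k)$, so the closed immersion induced on reduced schemes by Proposition \ref{prop:pi_0 isomorphism} restricts to an isomorphism over the open subscheme $\mathrm{RZ}_\Lambda^k$.
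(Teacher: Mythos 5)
Your proposal is correct and follows exactly the same route as the paper's proof: composing the isomorphisms from Proposition \ref{prop:pi_0 isomorphism} and Theorem \ref{thm:enhanced structure}, using the smooth morphism of Proposition \ref{prop:Rstructure}, the isomorphism of Theorem \ref{thm:YtoDL}, and the dimension count from Proposition \ref{prop:DLstructure}. The extra sentence justifying that the isomorphism restricts to the open $k$-stratum is a reasonable clarification but is implicit in the paper's argument.
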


\begin{proof}
The first isomorphism is the composition
\[
\mathrm{RZ}_\Lambda^{k,\red} \iso \mathbf{RZ}_\Lambda^{k,\red} \iso R^k_\Lambda
\]
of the isomorphisms of Proposition \ref{prop:pi_0 isomorphism} and Theorem \ref{thm:enhanced structure}.  The arrow in the middle is the smooth morphism of Proposition \ref{prop:Rstructure}.  The final isomorphism is that of Theorem \ref{thm:YtoDL}.  The final claim of the corollary now follows from Proposition \ref{prop:DLstructure}.
\end{proof}

\begin{remark}
In the special case $k=1$ we  have
\[
\mathrm{RZ}_\Lambda^{1,\red} \iso R^1_\Lambda = Y_\Lambda^{1,\mathrm{red}} \iso \mathrm{DL}_\Lambda^1 .
\]
\end{remark}


\section{Supplementary results when $n$ is even}
\label{s:extremal supplement}


Intuitively,  points of the closed subscheme $\mathrm{RZ}_\Lambda \subset \mathrm{RZ}$ parametrize $p$-divisible groups  that are relatively close to the framing object $\Lambda \otimes   \overline{\mathbb{Y}}$ of signature $(0,n)$.   
Proposition \ref{prop:strata points} can be understood as saying that in the decomposition
\[
\mathrm{RZ}^\red_\Lambda = \bigsqcup_{1 \le k \le \lfloor n/2 \rfloor } \mathrm{RZ}_\Lambda^{k,\red},
\]
 the locally closed subschemes indexed by smaller $k$ parametrize points that are closer to $\Lambda \otimes   \overline{\mathbb{Y}}$ than those parametrized by larger $k$.

It turns out that in the extremal case in which $n$ is even and $k=n/2$, each point of $ \mathrm{RZ}_\Lambda^{n/2}$, while as far  from $\Lambda \otimes   \overline{\mathbb{Y}}$ as it allowed to be, is very close to one of finitely many other framing objects.
 These other framing objects, which again have signature $(0,n)$ but are endowed with  non-principal polarizations,   provide a different way to parametrize the points of $ \mathrm{RZ}_\Lambda^{n/2}$.  This is what we explore in this section.


\subsection{Another partial Rapoport-Zink space}


As always, $\Lambda$ is a self-dual hermitian $\co_E$-lattice of rank $n\ge 2$.
the hermitian form is denoted $h(-,-)$.

\begin{definition}\label{def:scalar dual}
An $\co_E$-lattice $\Lambda' \subset \Lambda[1/p]$ is \emph{scalar-self-dual} if there exists a $c\in \Q_p^\times$ such that 
\[
c \Lambda' = \{ x \in \Lambda[1/p] : h(\Lambda',x) \subset \co_E \}.
\]
\end{definition}

We are especially interested in scalar-self-dual lattices $\Lambda'$ satisfying 
\begin{equation}\label{scalar inclusions}
p\Lambda \subsetneq \Lambda' \subsetneq \Lambda.
\end{equation}
The self-duality of $\Lambda$ then forces both $\ord_p(c)= -1$ and
\[
\mathrm{length}_{\co_E}( \Lambda/\Lambda') =\frac{n}{2}  .
\]
In particular, such lattices can only exist when $n$ is even, and we assume this for the remainder of 
\S \ref{s:extremal supplement}.
The scalar self-duality condition on $\Lambda'$ can then be interpreted as saying that 
 $\Lambda'/p\Lambda \subset \Lambda/p\Lambda$ is maximal isotropic under the natural $\co_E/p\co_E$-valued hermitian form

\begin{definition}\label{def:heart}
For a scalar-self-dual $\co_E$-lattice $\Lambda'$ as in \eqref{scalar inclusions},  and an  
$\breve{\F}_p$-scheme $S$, define  $\mathrm{RZ}_{\Lambda'}^\heartsuit(S)$ to be the set of isomorphism classes of quadruples $(X,\lambda_X,\alpha_X,\beta_X)$ in which 
\begin{itemize}
\item
$X$ is a $p$-divisible group over $S$ equipped with an $\co_E$-action of signature $(2,n-2)$, 
\item
$\lambda_X : X \to X^\vee$ is an $\co_E$-linear principal polarization, 
\item
$\alpha_X$ and $\beta_X$ are $\co_E$-linear isogenies
\[
\Lambda' \otimes \overline{\mathbb{Y}}_S
  \map{\alpha_X}
  X
\map{\beta_X}
p^{-1} \Lambda' \otimes \overline{\mathbb{Y}}_S
\]
whose composition is induced by the inclusion $\Lambda' \subset p^{-1}\Lambda'$, and  such that  $\alpha_X^*\lambda_X$ agrees with the canonical (non-principal) polarization on $\Lambda' \otimes \overline{\mathbb{Y}}_S$.
\end{itemize}.
\end{definition}

The functor $\mathrm{RZ}_{\Lambda'}^\heartsuit$ of Definition \ref{def:heart} is represented by a projective $\breve{\F}_p$-scheme, denoted the same way.
For any point  $(X,\lambda_X,\alpha_X,\beta_X) \in \mathrm{RZ}_{\Lambda'}^\heartsuit(S)$ there is a unique quasi-isogeny $\varrho_X$ making the diagram 
\[
\xymatrix{
& {   X  } \ar[dr]^{\beta_X}  \ar@{-->}[d]^{\varrho_X} \\
{ \Lambda' \otimes \overline{\mathbb{Y}}_S  }  \ar[r]  \ar[ur]^{\alpha_X}& {  \Lambda \otimes \overline{\mathbb{Y}}_S   }  \ar[r]&  {  p^{-1}  \Lambda' \otimes \overline{\mathbb{Y}}_S   }  
}
\]
commute, and this determines a point  $(X,\lambda_X,\varrho_X) \in \mathrm{RZ}_{\Lambda}(S)$.
We use this construction to regard
\[
\mathrm{RZ}_{\Lambda'}^\heartsuit \subset \mathrm{RZ}_{\Lambda}
\]
as a closed subscheme.

\begin{proposition}\label{prop:heartpoints}
For any scalar self-dual lattice $\Lambda'$ satisfying \eqref{scalar inclusions}, 
the bijection of Corollary \ref{cor:half hyperspecial} restricts to a bijection
\[
\mathrm{RZ}^\heartsuit_{\Lambda'}( \breve{\F}_p) 
\iso
\left\{
\begin{array}{c}  \breve{\Z}_p\mbox{-lattices }   \\  L_0  \subset  \breve{\Lambda}_0[1/p]  \end{array} : 
\begin{array}{c}
pL_0 \subset L_0^* \stackrel{2}{\subset} L_0\\
    \breve{\Lambda}'_0  \subset  L_0  \subset p^{-1} \breve{\Lambda}'_0 
  \end{array}  \right\} .
\]
Moreover, the two chains of inclusions on the right hand side are equivalent to the single chain condition
\[
  \breve{\Lambda}'_0   \stackrel{\frac{n}{2}-1} { \subset } L_0^* \stackrel{2}{\subset}  L_0  \stackrel{\frac{n}{2}-1} { \subset }p^{-1} \breve{\Lambda}'_0   .
  \]
\end{proposition}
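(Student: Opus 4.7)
The plan is to adapt the moduli-to-lattice dictionary from Proposition \ref{prop:hyperspecial points} and Corollary \ref{cor:half hyperspecial}. Any $X \in \mathrm{RZ}^\heartsuit_{\Lambda'}(\breve{\F}_p)$ is in particular an element of $\mathrm{RZ}_\Lambda(\breve{\F}_p)$, corresponding to a lattice $L_0$ satisfying $pL_0 \subset L_0^* \stackrel{2}{\subset} L_0$. Under the Dieudonn\'e module correspondence, the additional isogenies $\alpha_X$ and $\beta_X$ of Definition \ref{def:heart} translate into $\co_{\breve E}$-lattice inclusions $\breve{\Lambda}' \subset L(X) \subset p^{-1}\breve{\Lambda}'$ whose composition is induced by $\breve{\Lambda}' \subset p^{-1}\breve{\Lambda}'$, and the polarization compatibility $\alpha_X^*\lambda_X = \lambda_{\Lambda' \otimes \overline{\mathbb{Y}}}$ follows automatically: both sides arise as restrictions of the hermitian form $h$, the left via $h$-self-duality of $L(X)$ (Corollary \ref{cor:half hyperspecial}) and the right from the scalar self-duality of $\Lambda'$. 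Passing to the $e_0$-summand gives the supplementary condition $\breve{\Lambda}'_0 \subset L_0 \subset p^{-1}\breve{\Lambda}'_0$; this yields the desired bijection.

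The crucial computation for the second assertion is the identity $(\breve{\Lambda}'_0)^* = p^{-1}\breve{\Lambda}'_0$ with respect to the pairing $b$ of \eqref{twisted pairing}. I would derive this by combining the scalar self-duality (which says the $h$-dual of $\breve{\Lambda}'$ in $\breve{\Lambda}[1/p]$ is $p^{-1}\breve{\Lambda}'$) with the defining relation $b(y,x) = \mathrm{Tr}\, h(y, \Phi x)$ and the observation that $\Phi$ interchanges $\breve{\Lambda}'_0$ and $\breve{\Lambda}'_1$. Applying the relation $(\cdot)^{**} = \Phi^{-2}(\cdot)$ from \eqref{twisted dual properties} then yields $(p^{-1}\breve{\Lambda}'_0)^* = \breve{\Lambda}'_0$.

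With these duals in hand, the equivalence of the two conditions reduces to a length count. Assuming $\breve{\Lambda}'_0 \subset L_0 \subset p^{-1}\breve{\Lambda}'_0$ and $L_0^* \stackrel{2}{\subset} L_0$, applying $(\cdot)^*$ to the former chain and using the duals just computed produces $\breve{\Lambda}'_0 \subset L_0^* \subset p^{-1}\breve{\Lambda}'_0$ with the two colengths equal to those of the original (since $(\cdot)^*$ preserves lengths, being a $\sigma$-twisted perfect duality). Thus if $L_0 \stackrel{b}{\subset} p^{-1}\breve{\Lambda}'_0$ then $\breve{\Lambda}'_0 \stackrel{b}{\subset} L_0^*$, and summing along $\breve{\Lambda}'_0 \stackrel{b}{\subset} L_0^* \stackrel{2}{\subset} L_0 \stackrel{b}{\subset} p^{-1}\breve{\Lambda}'_0$ forces $2b + 2 = n$, hence $b = n/2 - 1$. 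The missing condition $pL_0 \subset L_0^*$ is then automatic: $pL_0 \subset p \cdot p^{-1}\breve{\Lambda}'_0 = \breve{\Lambda}'_0 \subset L_0^*$. The reverse implication, from the single chain back to the two chains, is immediate by inspection. I anticipate that the most delicate step is the automatic polarization compatibility in the first paragraph, where one must be precise about how the non-principal polarization on $\Lambda' \otimes \overline{\mathbb{Y}}$ inherits from the scalar-self-dual hermitian form; everything else is a straightforward manipulation of the lattice chain.
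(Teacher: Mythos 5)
Your proof is correct and follows essentially the same route as the paper: the paper also reduces the second claim to the fact that the right dual operator $(\cdot)^*$ interchanges $\breve{\Lambda}'_0$ and $p^{-1}\breve{\Lambda}'_0$ (which it derives from the self-duality of $\breve{\Lambda}'_0 \oplus p^{-1}\breve{\Lambda}'_1$ and \eqref{dual shift}, rather than from your direct computation via the definition of $b$ and \eqref{twisted dual properties}), and then applies this operator to the chain and counts colengths. Your extra care over the polarization compatibility and the explicit length bookkeeping are just more detailed renditions of steps the paper compresses.
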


\begin{proof}
The first claim follows directly from the construction of the bijection of Corollary \ref{cor:half hyperspecial}.
The scalar-self-duality assumption on $\Lambda'$ implies that $\breve{\Lambda}_0'\oplus p^{-1} \breve{\Lambda}_1'$ is self-dual.  Using this and  \eqref{dual shift}, we deduce that the right dual   operator \eqref{right dual}  interchanges the lattices $\breve{\Lambda}'_0$ and $p^{-1} \breve{\Lambda}'_0$.  
Applying this operator throughout
\[
\breve{\Lambda}'_0  \subset L_0  \stackrel{r}{\subset} p^{-1} \breve{\Lambda}'_0
\]
(this is the definition of $r$) therefore results in
\[
\breve{\Lambda}_0'\stackrel{r}{\subset}  L_0^* \subset  p^{-1} \breve{\Lambda}_0' ,
\]
and the second claim follows immediately.
\end{proof}

\begin{proposition}\label{prop:heart decomp}
For every point $s\in \mathrm{RZ}_\Lambda^{n/2}(\breve{\F}_p)$ there exists a  scalar-self-dual lattice $\Lambda'$ as in \eqref{scalar inclusions} such that 
$s\in \mathrm{RZ}_{\Lambda'}^\heartsuit   (\breve{\F}_p)$.
\end{proposition}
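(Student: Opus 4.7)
The plan is to identify the scalar-self-dual lattice $\Lambda'$ that is forced by the conditions, and then verify it descends from $\breve{\Z}_p$ to $\Z_p$ using the auxiliary $p$-divisible group $G$ from the enhanced moduli problem.

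Given $L_0 \in \mathrm{RZ}_\Lambda^{n/2}(\breve{\F}_p)$, by Corollary \ref{cor:augment bijection} and Proposition \ref{prop:RZ tilde points} I obtain associated lattices $M_0 = L_0 \cap \breve{\Lambda}_0$ and $N_0 = L_0 + \breve{\Lambda}_0$ corresponding to $p$-divisible groups $H, H^\vee, G$. In the extremal case $k = n/2$, the middle inclusion of the Proposition \ref{prop:RZ tilde points} chain has colength $n - 2k = 0$, so it collapses to the equality $pN_0 = N_0^*$. Combining the constraints of Proposition \ref{prop:heartpoints} with $p\Lambda \subsetneq \Lambda' \subsetneq \Lambda$, a straightforward colength count forces
\[
\breve{\Lambda}_0' \;=\; pL_0 + p\breve{\Lambda}_0 \;=\; p(L_0 + \breve{\Lambda}_0) \;=\; pN_0
\]
as the unique candidate for the $e_0$-component of $\breve{\Lambda}'$.

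The main obstacle is to show that $\breve{\Lambda}_0' = pN_0$ is stable under the $\sigma^2$-semi-linear operator $\Phi^2$, so that it descends to a $\Z_{p^2}$-sublattice of $\Lambda$, and hence to the desired $\co_E$-lattice $\Lambda' \subset \Lambda$. This reduces to $\Phi^2 N_0 = N_0$. By Lemma \ref{lem:intermediate crystal}, $N_0 = D(G)_0$, where $G$ has signature $(2k, n-2k) = (n, 0)$. This extremal signature forces $VD(G)_1 = pD(G)_0$ and $VD(G)_0 = D(G)_1$, from which I deduce $FD(G)_0 = D(G)_1$ and $FD(G)_1 = pD(G)_0$, giving $F^2 D(G)_0 = pD(G)_0$. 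Since $D(G) \subset \breve{\Lambda}[1/p]$ and $F$ on the isocrystal corresponds to $p\Phi$ on the $0$-component and $\Phi$ on the $1$-component as in \eqref{contraction diagrams}, we have $F^2 = p\Phi^2$ on $D(G)_0$, whence $\Phi^2 D(G)_0 = D(G)_0$, i.e., $\Phi^2 N_0 = N_0$.

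To conclude, I take $\Phi^2$-fixed points of $\breve{\Lambda}_0'$ to obtain a $\Z_{p^2}$-lattice $\Lambda_0' \subset \Lambda$, which I extend $\co_E$-linearly to $\Lambda' \subset \Lambda$. The strict inclusions $p\Lambda \subsetneq \Lambda' \subsetneq \Lambda$ are immediate from the colength $n/2 \geq 1$. Scalar-self-duality of $\Lambda'$ translates, by the same calculation underlying \eqref{dual shift}, to the equation $(\breve{\Lambda}_0')^* = p^{-1}\breve{\Lambda}_0'$, which holds since $(pN_0)^* = p^{-1}N_0^* = p^{-1}(pN_0)$. Finally, the chain $\breve{\Lambda}_0' \stackrel{n/2-1}{\subset} L_0^* \stackrel{2}{\subset} L_0 \stackrel{n/2-1}{\subset} p^{-1}\breve{\Lambda}_0'$ required by Proposition \ref{prop:heartpoints} follows from $\breve{\Lambda}_0' = pN_0$ and $p^{-1}\breve{\Lambda}_0' = N_0$ by direct colength calculations, using the $\mathrm{RZ}$-condition $[L_0^* : L_0] = 2$ and the defining invariant $\lambda_{n/2}$ of $L_0$, which yields $[N_0 : L_0] = [\breve{\Lambda}_0 : M_0] = n/2 - 1$. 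This gives $L_0 \in \mathrm{RZ}^\heartsuit_{\Lambda'}(\breve{\F}_p)$.
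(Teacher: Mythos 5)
Your proof is correct and follows essentially the same strategy as the paper: both identify $\breve{\Lambda}'_0 = pN_0$ as the forced candidate, prove that it is $\Phi^2$-stable, take fixed points to descend to an $\co_E$-lattice, and verify scalar self-duality from $N_0^* = pN_0$. The only difference is technical packaging: the paper deduces $\Phi^{-2}N_0 = N_0$ directly from the duality identity $N_0^{**} = \Phi^{-2}N_0$ combined with $N_0^* = pN_0$, whereas you obtain $\Phi^2 N_0 = N_0$ from $F^2 D(G)_0 = pD(G)_0$ (forced by the extremal signature $(n,0)$ of $G$) and the translation $F^2 = p\Phi^2$ on the $e_0$-component from \eqref{contraction diagrams}; these two derivations are equivalent since the equality $N_0^* = pN_0$ is itself the translation of the signature condition on $G$.
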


\begin{proof}
Under the bijection of Proposition \ref{prop:RZ tilde points}, the point $s$ corresponds to a chain of lattices
\[
 p\breve{\Lambda}_0 \stackrel{\frac{n}{2}-1}{\subset}  p M_0^* \stackrel{1}{\subset}  pN_0 =  N_0^* \stackrel{1}{\subset}  M_0 \stackrel{ \frac{n}{2} -1}{\subset} \breve{\Lambda}_0  ,
 \]
together with a lattice $L_0 \subset \breve{\Lambda}_0$ satisfying
 \[
 pL_0 \subset  L_0^* \stackrel{2}{\subset} L_0\quad \mbox{and}\quad   M_0 \stackrel{\frac{n}{2}}{\subset} L_0 \stackrel{\frac{n}{2}-1}{\subset} N_0. 
 \]

The essential thing is the middle equality $pN_0 =  N_0^*$, which implies
\[
\Phi^{-2} N_0 \stackrel{ \eqref{twisted dual properties}}{=}N_0^{**} = ( p N_0) ^* = N_0.
\]
It follows that the $\co_{\breve{E}}$-lattice
$
 p( N_0 \oplus \Phi N_0) \subset \breve{\Lambda}_0  \oplus \breve{\Lambda}_1 
 $
 is fixed by $\Phi$, and we define an $\co_E$-lattice
 \[
 \Lambda' =  p( N_0 \oplus \Phi N_0)^{ \Phi=\mathrm{id} } \subset \Lambda.
 \]
  Using \eqref{dual shift} we see that 
  $
 N_0 \oplus \Phi N_0^*  \subset \breve{\Lambda}[1/p] 
$
is self-dual under the hermitian pairing.  
This implies that the dual lattice of $N_0 \oplus \Phi N_0$ is $p(N_0 \oplus \Phi N_0)$, and taking $\Phi$-fixed points shows that  the dual lattice of $\Lambda'$ is $p^{-1}\Lambda'$.  It follows that $\Lambda'$ is scalar-self-dual and satisfies \eqref{scalar inclusions}.

The inclusions $M_0 \subset L_0 \subset N_0$ imply
\[
 \breve{\Lambda}_0' = pN_0 \subset L_0 \subset  N_0= p^{-1}\breve{\Lambda}_0' ,
\]
and Proposition \ref{prop:heartpoints} shows that
$s\in \mathrm{RZ}_{\Lambda'}^\heartsuit   (\breve{\F}_p)$.
\end{proof}


\subsection{Another Deligne-Lusztig variety}


We continue to assume that $n$ is even, and fix a 
 scalar-self dual lattice $\Lambda' \subset \Lambda[1/p]$ satisfying \eqref{scalar inclusions}.

 The pairing $b$ of \eqref{twisted pairing} determines a pairing 
\[
b' =p^{-1}b   : \breve{\Lambda}'_0  \times \breve{\Lambda}'_0 \to \breve{\Z}_p.
\]
Exactly as in \S \ref{ss:DL}, this induces a pairing of $\co_S$-modules
\[
 ( \breve{\Lambda}_0 \otimes_{ \breve{\Z}_p} \co_S ) \times ( \breve{\Lambda}_0 \otimes_{ \breve{\Z}_p}  \co_S )
  \to \co_S, 
\]
 linear in the first variable and $\sigma$-linear in the second, for any $\breve{\F}_p$-scheme $S$.
 Once again, for any local direct summand $\mathscr{F} \subset  \breve{\Lambda}_0 \otimes_{ \breve{\Z}_p}  \co_S$, we denote   its left annihilator under this pairing by
\[
\mathscr{F}^\perp = \{ x \in  \breve{\Lambda}_0 \otimes_{ \breve{\Z}_p}   \co_S :  b'( x, \mathscr{F} ) =0 \}.
\]

\begin{definition}\label{def:heartDL}
Define $\mathrm{DL}_{\Lambda'}^\heartsuit$ to be the projective $\breve{\F}_p$-scheme whose functor of points assigns to any $\breve{\F}_p$-scheme $S$ the set of flags of $\co_S$-module local direct summands
\[
0 \stackrel{\frac{n}{2}-1} { \subset }  \mathscr{F} 
\stackrel{ 2}{\subset} \mathscr{F}^\perp \stackrel{\frac{n}{2}-1} { \subset }
 \breve{\Lambda}'_0 \otimes_{\breve{\Z}_p} \co_S
\]
of the indicated coranks.   
\end{definition}

\begin{proposition}
The scheme $\mathrm{DL}_{\Lambda'}^\heartsuit$  of Definition \ref{def:heartDL} is a Deligne-Lusztig variety for the unitary group of the finite hermitian space $\Lambda'/p\Lambda'$ (where the hermitian form on $\Lambda'$ is first multiplied by $p^{-1}$ to make it self-dual).
It is irreducible and smooth of dimension $n-2$.
\end{proposition}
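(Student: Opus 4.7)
The plan is to follow the template of the proof of Proposition \ref{prop:DLstructure} closely. First, multiplying the hermitian form on $\Lambda'$ by $p^{-1}$ turns $\Lambda'$ into a self-dual hermitian $\co_E$-lattice of rank $n$, so $\Lambda'/p\Lambda'$ becomes a non-degenerate hermitian space over $\F_{p^2}$. Let $H$ be the base change to $\breve{\F}_p$ of its unitary group. Choose an anti-diagonal basis of $\Lambda'$ and project to $\breve{\Lambda}'_0$, exactly as in Proposition \ref{prop:DLstructure}, to identify $H \cong \GL_n$; the Frobenius acts on the Weyl group of the diagonal torus by $\sigma(s_i) = s_{n-i}$.

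Second, the subspace $\mathscr{F}$ determines the flag $0 \subset \mathscr{F} \subset \mathscr{F}^\perp$ uniquely, so $\mathrm{DL}_{\Lambda'}^\heartsuit$ is naturally a closed subscheme of the partial flag variety $H/P_I$ for $I = S \smallsetminus \{s_{n/2-1}\}$. The Frobenius transform is $\sigma(I) = S \smallsetminus \{s_{n/2+1}\}$, whose parabolic stabilizes the standard subspace of dimension $n/2+1$. Interpreting $b'$ via its bilinear linearization, as in the construction of $\beta$ in \S \ref{ss:special bundle}, one has $\mathscr{F}^\perp = (\sigma^*\mathscr{F})^{\tilde\perp}$, and the condition $\mathscr{F} \subset \mathscr{F}^\perp$ translates to the statement that the pair $(\mathscr{F}, \mathscr{F}^\perp)$ lies in the image of $H/P_{I \cap \sigma(I)}$ inside $H/P_I \times H/P_{\sigma(I)}$, with the second component being the Frobenius transform of the first. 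Invoking Lemma 2.12 of \cite{Vollaard} then produces a cartesian square
$$
\xymatrix{
\mathrm{DL}_{\Lambda'}^\heartsuit \ar[r] \ar[d] & H/P_I \ar[d]^{\mathrm{id} \times \sigma} \\
H/P_{I \cap \sigma(I)} \ar[r] & H/P_I \times H/P_{\sigma(I)} ,
}
$$
identifying $\mathrm{DL}_{\Lambda'}^\heartsuit$ with $X_I(\mathrm{id})$ in the notation of \S 4.4 of \cite{VollaardWedhorn}, and hence with a Deligne-Lusztig variety for the unitary group of $\Lambda'/p\Lambda'$.

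Third, smoothness is immediate from \emph{loc.\ cit.}, and the dimension is read off from the diagram: using $\dim H/P_{I \cap \sigma(I)} = n^2/4 + n - 3$ and $\dim H/P_I = \dim H/P_{\sigma(I)} = n^2/4 - 1$, one computes $\dim \mathrm{DL}_{\Lambda'}^\heartsuit = n - 2$. Irreducibility follows from Bonnafé-Rouquier \cite{BR}, whose hypothesis $I \cup \sigma(I) = S$ is met since $s_{n/2-1}$ and $s_{n/2+1}$ are distinct simple reflections (the degenerate case $n=2$ reduces $\mathrm{DL}_{\Lambda'}^\heartsuit$ to a single point, trivially irreducible). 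The main obstacle I expect is verifying that the Frobenius $\sigma$ appearing in the defining square of $X_I(\mathrm{id})$ is exactly the one induced by the $\sigma$-twist in $\mathscr{F}^\perp$ under the bilinear linearization of $b'$ — essentially the same bookkeeping that underlies the construction of $\beta$ in \S \ref{ss:special bundle}.
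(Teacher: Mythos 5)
Your proposal is correct and takes essentially the same approach as the paper, which simply remarks that "The proof is the same as for Proposition \ref{prop:DLstructure}" — you have carried out the details of that omitted argument. Your identification of $I = S\smallsetminus\{s_{n/2-1}\}$, $\sigma(I) = S\smallsetminus\{s_{n/2+1}\}$, the dimension count, and the observation that $I\cup\sigma(I)=S$ (so Bonnaf\'e--Rouquier applies, unlike in the $k=n/2$ case of $\mathrm{DL}_\Lambda^k$ where $s_{k-1}$ and $s_{n-k}$ collide) are all correct.
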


\begin{proof}
The proof is the same as for Proposition \ref{prop:DLstructure}.
\end{proof}

\begin{theorem}\label{thm:heart comparison}
There is a isomorphism  
\[
\mathrm{RZ}_{\Lambda'}^{\heartsuit,\red} \iso \mathrm{DL}_{\Lambda'}^\heartsuit 
\]
sending an $\breve{\F}_p$-valued point of the left hand side  to the flag
\[
0 \subset  \frac{pL_0^*}{p\breve{\Lambda}'_0} 
\subset \frac{pL_0}{p\breve{\Lambda}'_0}  \subset
  \frac{ \breve{\Lambda}'_0 }{ p\breve{\Lambda}'_0 } 
\]
determined by the bijection of  Proposition \ref{prop:heartpoints}.
\end{theorem}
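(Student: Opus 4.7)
The plan is to mirror the proof of Theorem \ref{thm:YtoDL}, with the simpler setup reflecting the absence of the auxiliary $p$-divisible groups $H, G$. Over $\mathrm{RZ}_{\Lambda'}^{\heartsuit,\red}$, applying Grothendieck-Messing to the universal data $(X, \alpha_X, \beta_X)$ yields a chain of morphisms of vector bundles
\[
\breve{\Lambda}'_0 \otimes \co \xrightarrow{\alpha} \mathscr{L}_0 \xrightarrow{\beta} p^{-1}\breve{\Lambda}'_0 \otimes \co
\]
whose composition is the natural inclusion; here $\mathscr{L} = \underline{\Hom}_{\co_E}(\mathscr{D}(\overline{\mathbb{Y}}), \mathscr{D}(X))$ carries the Hodge filtration $\Fil^0 \mathscr{L}_0 \subset \mathscr{L}_0$. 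I would set $\mathscr{F}^\perp := \ker(\alpha)$ and define $\mathscr{F}$ as the image of the composite
\[
\Fil^0\mathscr{L}_0 \xrightarrow{\beta} p^{-1}\breve{\Lambda}'_0 \otimes \co \xrightarrow{\cdot p} \breve{\Lambda}'_0 \otimes \co,
\]
where the second arrow is multiplication by $p$, well-defined because we work over an $\breve{\F}_p$-scheme. By the same reasoning as Lemma \ref{lem:MN filter fibers} (identifying $\mathscr{L}_{0,s} = L_0/pL_0$ and $\Fil^0\mathscr{L}_{0,s} = L_0^*/pL_0$ at a closed point $L_0 \in \mathrm{RZ}_{\Lambda'}^{\heartsuit,\red}(\breve{\F}_p)$), these subsheaves have constant fiber ranks $n/2+1$ and $n/2-1$, realizing $pL_0/p\breve{\Lambda}'_0$ and $pL_0^*/p\breve{\Lambda}'_0$ respectively; Lemma \ref{lem:bundle criterion} then upgrades them to local direct summands.

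The inclusion $\mathscr{F} \subset \mathscr{F}^\perp$ follows at fibers from $L_0^* \subset L_0$, while a direct computation shows $\mathscr{F}^\perp$ coincides with the annihilator of $\mathscr{F}$ under the sesquilinear pairing on $\breve{\Lambda}'_0 \otimes \co$ induced by $b' = p^{-1} b$: indeed, $b'(pL_0, pL_0^*) = p \cdot b(L_0, L_0^*) \subset p\breve{\Z}_p$ by the very definition of $L_0^*$ (and the isotropy $b'(pL_0^*, pL_0^*) \subset p\breve{\Z}_p$ follows since $L_0^* \subset L_0$), and the rank identity $(n/2-1)+(n/2+1) = n$ forces equality. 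This yields a morphism $\mathrm{RZ}_{\Lambda'}^{\heartsuit,\red} \to \mathrm{DL}_{\Lambda'}^{\heartsuit}$ which is bijective on $\breve{\F}_p$-points by Proposition \ref{prop:heartpoints}, and proper because $\mathrm{RZ}_{\Lambda'}^{\heartsuit,\red}$ is projective by the same argument as Remark \ref{rem:projectivity}.

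The main obstacle is proving the morphism is unramified, parallel to Lemma \ref{lem:YtoDLunr}. Given two lifts $a, b : \widetilde{S} \to \mathrm{RZ}_{\Lambda'}^{\heartsuit,\red}$ across a square-zero thickening, agreeing on the reduction and inducing the same morphism to $\mathrm{DL}_{\Lambda'}^{\heartsuit}$, Grothendieck-Messing supplies canonical identifications $a^*\mathscr{D}(X) \iso b^*\mathscr{D}(X)$ and $a^*\mathscr{L}_0 \iso b^*\mathscr{L}_0$, under which $\alpha$ and $\beta$ match (both being determined by their reductions via the crystal formalism), and the hypothesis gives $a^*\mathscr{F} = b^*\mathscr{F}$, $a^*\mathscr{F}^\perp = b^*\mathscr{F}^\perp$ as subbundles of $\breve{\Lambda}'_0 \otimes \co_{\widetilde{S}}$. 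The key reconstruction step is that $\alpha$ realizes $\breve{\Lambda}'_0 \otimes \co / \mathscr{F}^\perp$ as a local direct summand of $\mathscr{L}_0$ contained in $\Fil^0\mathscr{L}_0$, while the rescaled map $p\beta$ gives an isomorphism $\mathscr{L}_0 / \mathrm{image}(\alpha) \iso \mathscr{F}^\perp$ carrying $\Fil^0\mathscr{L}_0 / \mathrm{image}(\alpha)$ isomorphically onto $\mathscr{F}$; together these pin down $\Fil^0 \mathscr{L}_0 = \Fil^0\mathscr{D}(X)_0$ from the flag data alone, and $\Fil^0\mathscr{D}(X)_1$ is then determined as the exact annihilator of $\Fil^0\mathscr{D}(X)_0$ under the polarization pairing. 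Grothendieck-Messing theory yields $a^*X \iso b^*X$ as polarized deformations, and rigidity of isogenies on thickenings forces the isogenies $\alpha_X, \beta_X$ to agree, so $a = b$. With the morphism shown to be proper, bijective on closed points, and unramified, and the target $\mathrm{DL}_{\Lambda'}^{\heartsuit}$ smooth (hence reduced), Lemma \ref{lem:general iso} delivers the desired isomorphism.
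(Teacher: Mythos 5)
Your proposal is correct and follows essentially the same route as the paper: build the two-step flag in $\breve{\Lambda}'_0 \otimes \co$ from the Grothendieck-Messing crystal of the universal data, check constancy of fiber ranks (hence local freeness via Lemma \ref{lem:bundle criterion}), and prove unramifiedness by reconstructing $\Fil^0\mathscr{D}(X)_0$ from the flag and then $\Fil^0\mathscr{D}(X)_1$ via polarization duality, finishing with Lemma \ref{lem:general iso}. The only cosmetic difference is that you extract $\mathscr{F}^\perp$ as $\ker(\alpha)$ and $\mathscr{F}$ via $p\beta$, whereas the paper obtains both as images of $\mathscr{D}(X)_0$ and $\Fil^0\mathscr{D}(X)_0$ under $\beta$ alone (with the multiplication-by-$p$ identification $p^{-1}\Lambda'\iso\Lambda'$); these yield the same flag.
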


\begin{proof}
This is similar to the proof of Theorem \ref{thm:YtoDL}.  
The universal isogeny $\beta_X:X  \to   p^{-1} \Lambda' \otimes \overline{\mathbb{Y}} $
over $\mathrm{RZ}_{\Lambda'}^{\heartsuit,\red}$ induces a 
 morphism of vector bundles
\[
  \mathscr{D}(X)  \to \mathscr{D}( p^{-1} \Lambda' \otimes \overline{\mathbb{Y}} ) ,
\]
and we define coherent sheaves 
\begin{equation}\label{Gflag}
0 \subset \mathscr{G} \subset \mathscr{G}^\dagger \subset \mathscr{D}( p^{-1} \Lambda' \otimes \overline{\mathbb{Y}} )_0 
\end{equation}
on $\mathrm{RZ}_{\Lambda'}^{\heartsuit,\red}$ by
\begin{align*}
\mathscr{G} 
&  = \mathrm{Image} \big(  \Fil^0 \mathscr{D}(X)_0  \to 
 \mathscr{D}( p^{-1} \Lambda' \otimes \overline{\mathbb{Y}} )_0  \big) \\
 \mathscr{G}^\dagger
&  = \mathrm{Image} \big(  \mathscr{D}(X)_0  \to 
 \mathscr{D}( p^{-1} \Lambda' \otimes \overline{\mathbb{Y}} )_0  
 \big).
\end{align*}

Fix an isomorphism $\co_{\breve{E}}$-modules $D(\overline{\mathbb{Y}}) \iso \co_{\breve{E}}$.
This determines isomorphisms of $\co_{\breve{E}}$-modules
\[
D( \Lambda'\otimes \overline{\mathbb{Y}}) \iso  \breve{\Lambda}' 
\]
 as in \eqref{contraction}, and an isomorphism of vector bundles
\[
\mathscr{D}( \Lambda'\otimes \overline{\mathbb{Y}}) \iso 
 \breve{\Lambda}'  \otimes_{ \breve{\Z}_p} \co_{\mathrm{RZ}_{\Lambda'}^{\heartsuit,\red} }.
\]
Using the multiplication-by-$p$ isomorphism $p^{-1}\Lambda' \iso \Lambda'$, we now identify
\[
\mathscr{D}( p^{-1} \Lambda' \otimes \overline{\mathbb{Y}} ) 
\iso  \breve{\Lambda}'  \otimes_{\breve{\Z}_p} \co_{\mathrm{RZ}_{\Lambda'}^{\heartsuit,\red} },
\]
and identify \eqref{Gflag} with a flag of coherent sheaves
\begin{equation}\label{aux flag}
0 \subset \mathscr{F} \subset \mathscr{F}^\dagger \subset  \breve{\Lambda}' _0
 \otimes_{ \breve{\Z}_p} \co_{\mathrm{RZ}_{\Lambda'}^{\heartsuit,\red} }.
\end{equation}

At a point of $\mathrm{RZ}_{\Lambda'}^{\heartsuit,\red} (\breve{\F}_p)$, corresponding to a chain of lattices
\[
  \breve{\Lambda}'_0   \stackrel{\frac{n}{2}-1} { \subset } L_0^* \stackrel{2}{\subset}  L_0  \stackrel{\frac{n}{2}-1} { \subset }p^{-1} \breve{\Lambda}'_0   , 
  \]
  the fibers of $\mathscr{F}$ and $\mathscr{F}^\dagger$ are identified with the images of
\[
\frac{pL_0^*}{p^2L_0} \to  \frac{ \breve{\Lambda}'_0  }{   p \breve{\Lambda}'_0  } 
\quad\mbox{and}\quad
\frac{pL_0}{p^2 L_0}  \to  \frac{ \breve{\Lambda}'_0  }{   p \breve{\Lambda}'_0  } ,
\]
respectively. 
See the  proof of Theorem \ref{thm:YtoDL}, and especially Lemma \ref{lem:MN filter fibers}. 
In particular these coherent sheaves have constant fiber dimension, and one can deduce using Lemma \ref{lem:bundle criterion} that they are local direct summands of $\breve{\Lambda}' _0
 \otimes_{ \breve{\Z}_p} \co_{\mathrm{RZ}_{\Lambda'}^{\heartsuit,\red} }$.
 The equality $\mathscr{F}^\dagger = \mathscr{F}^\perp$ can also be checked on fibers, where it is clear from the definition of the pairing $b'$.
 
 All of this shows that \eqref{aux flag} defines a morphism 
 \[
 \mathrm{RZ}_{\Lambda'}^{\heartsuit,\red} \to \mathrm{DL}_{\Lambda'}^\heartsuit
 \]
  with the desired form on $\breve{\F}_p$-valued points, and it remains to show that it is an isomorphism.
  For this, once again by Lemma \ref{lem:general iso}, it suffices to show that the  morphism in question is  unramified.
  The proof is essentially the same as that of Lemma \ref{lem:YtoDLunr}, replacing the use of Lemma \ref{lem:miracle filtration} with the equality
 \[
 \Fil^0 \mathscr{D}(X) = \mathrm{ker} ( \mathscr{D}(X) \to \mathscr{G}^\dagger/\mathscr{G} \iso \mathscr{F}^\perp/\mathscr{F}).
 \]
This last equality can be verified on  fibers over points $s\in  \mathrm{RZ}_{\Lambda'}^{\heartsuit,\red}(\breve{\F}_p)$, where it is equivalent to the obvious equality
\[
\frac{L^*_0}{p L_0} 
 = \mathrm{ker}\left(
\frac{L_0}{pL_0} \map{p} \frac{pL_0 / p \breve{\Lambda}_0'}{pL_0^* / p \breve{\Lambda}_0'} 
\right) . 
\]
\end{proof}


\section{Irreducible components of the Rapoport-Zink space}


In this section we prove our main results on the structure of the reduced scheme $\mathrm{RZ}^\red$ underlying the formal $\breve{\F}_p$-scheme $\mathrm{RZ}$ of Definition \ref{def:fullRZ}.   
The key point is to explain the relation between the  locally closed subschemes  $ \mathrm{RZ}^{k,\red}_\Lambda \subset \mathrm{RZ}^\red$ of Definition \ref{def:RZk}, and  the irreducible components of $ \mathrm{RZ}^\red$, as described in \cite{XZ} and \cite{FI}.


\subsection{The affine Deligne-Luszig variety}


 Fix an integer $n\ge 2$.
 Let $W$ be an $n$-dimensional vector space over $E$ equipped with a hermitian form 
 $h : W \times W \to E$.  Up to isomorphism there are two such $W$, distinguished  by the value of 
 \[
 \det(W) \in \Q_p^\times /\mathrm{Nm}_{E/\Q_p}(E^\times) .
 \]
 
 We assume that $\det(W)=1$, which implies the existence of an $E$-basis $x_1,\ldots, x_n \in W$ such that the hermitian form is given by the matrix with $1$'s on the antidiagonal, and $0$'s elsewhere.  In other words
 \[
 h(x_i,x_j) = \begin{cases}
 1 & \mbox{if }i+j=n + 1 \\
 0 & \mbox{otherwise.}
 \end{cases}
 \]
 
The group of unitary similitudes $G=\GU(W)$ is an unramified reductive group over $\Q_p$, and our choice of basis determines subgroups
 \[
 T \subset B \subset G, 
 \]
 in which the Borel $B$ is  the stabilizer of the flag  $\mathcal{F}_1\subset \cdots \subset \mathcal{F}_n=V$ defined by $\mathcal{F}_i=\mathrm{Span}_E\{x_1,\ldots, x_i\}$, and  $T$ is the  maximal torus that acts through scalars on every $x_i$.

Regard $x_1 ,\ldots, x_n \in \breve{W}\define  W \otimes_{\Q_p} \breve{\Q}_p$ as an $\breve{E}$-basis, and denote by 
\[
y_1,\ldots, y_n \in  \breve{W}_0 \quad \mbox{and}\quad z_1,\ldots, z_n \in  \breve{W}_1
\]
the projections of these basis vectors to the two summands in the decomposition of \eqref{general decomp}.
Thus $x_i =y_i+z_i$, the Frobenius operator
$
\sigma : \breve{W} \to \breve{W}
$
interchanges $y_i$ with $z_i$,  and we have a $\breve{\Q}_p$-basis
\[
y_1,\ldots, y_n,z_1,\ldots, z_n  \in \breve{W} .
\]

We use this last basis to identify $G(\breve{\Q}_p) \subset \mathrm{GL}_{2n}(\breve{\Q}_p)$.
 The Borel  $B(\breve{\Q}_p)$ is then identified with the upper triangular matrices in $G(\breve{\Q}_p)$, while  $T(\breve{\Q}_p)$ is the subgroup  of diagonal matrices of the form
\[
[ t_0,t_1,\ldots, t_n]  \define \begin{pmatrix}
t_1 \\
& \ddots \\
& & t_n \\
& & &  t_n^{-1} t_0   \\
& & & & \ddots \\
& & & & &  t_1^{-1} t_0
\end{pmatrix}   .
\]  
The cocharacter lattice of $T$  has a $\Z$-basis
$
\epsilon_0,\epsilon_1,\ldots, \epsilon_n \in X_*(T)
$
given by
\begin{align*}
\epsilon_0(t)  &= [ t , 1 , 1 ,\ldots, 1] \\
 \epsilon_1(t) &  = [ 1 , t,1 ,\ldots, 1]  \\
 & \ \vdots \\
 \epsilon_n(t) & = [ 1,1,\ldots, 1,  t] .
\end{align*}

\begin{definition}
A cocharacter
\[
\lambda = a_0\epsilon_0+a_1\epsilon_1 \cdots + a_n\epsilon_n \in X_*(T)
\]
is \emph{minuscule} if $|a_i-a_j| \le 1$ for all $i,j \in \{1,\ldots, n\}$, and is  \emph{dominant} (with respect to  $B$) if  $a_1\ge a_2 \ge \cdots \ge a_n$.
\end{definition}

\begin{remark}
The action of the Frobenius $\sigma$ on $X_*(T)$ is given by 
\begin{align*}
\epsilon_0^\sigma  & = \epsilon_0+\epsilon_1+\cdots+\epsilon_n \\
\epsilon_i^\sigma & = - \epsilon_{i^\vee} \mbox{ for }1\le i \le n,
\end{align*}
where we abbreviate $i^\vee =  {n+1-i}$.  
\end{remark}

\begin{remark}
The center $Z \subset G$ is isomorphic to the Weil restriction $\mathrm{Res}_{E/\Q_p} \mathbb{G}_m$, and its cocharacter lattice is
\[
 X_*(Z) = \mathrm{Span}_\Z \{\epsilon_0,\epsilon_0^\sigma\}   \subset X_*(T).
\]
\end{remark}

The basis vectors $x_1,\ldots, x_n \in W$ span an $\co_E$-lattice 
\[
\Lambda = \mathrm{Span}_{\co_E}\{ x_1,\ldots, x_n\} \subset W
\]
self-dual under the hermitian form.
The group of unitary similitudes of $\Lambda$ determines an extension of $G$ to a reductive group scheme over $\Z_p$, denoted the same way.
The subgroup $G(\breve{\Z}_p) \subset G(\breve{\Q}_p)$ is the stabilizer of the $\co_{\breve{E}}$-lattice
\begin{equation}\label{ADLlattices}
\mathbb{D} \define \Lambda\otimes_{\co_E} \co_{\breve{E}} =
  \mathrm{Span}_{\breve{\Z}_p} \{y_1,\ldots, y_n,z_1,\ldots, z_n   \}  \subset \breve{W}.
\end{equation}
We now define the particular affine Deligne-Lusztig variety of interest.

\begin{definition}
The \emph{affine Deligne-Lusztig variety} is the set 
\[
X_\mu(b)   = \{ g \in G(\breve{\Q}_p) / G(\breve{\Z}_p) : g^{-1}  b g^\sigma \in  G(\breve{\Z}_p) \mu(p) G(\breve{\Z}_p) \}  
\]
where  $b = \epsilon_0(p) \in Z(\breve{\Q}_p)$ and 
$
 \mu = \epsilon_0+\epsilon_1+\epsilon_2 \in X_*(T) .
$
\end{definition}

\begin{remark}\label{rem:central b}
Because our chosen $b\in G(\breve{\Q}_p)$ is central, its twisted centralizer $J_b$ is canonically identified with $G$, and 
the Deligne-Lusztig variety $X_\mu(b)$ is stable under left multiplication by $G(\Q_p)$.
\end{remark}

\begin{proposition}\label{prop:DLtoRZ}
If we use the lattice $\Lambda$ above  to define the Rapoport-Zink space  of  \S \ref{ss:RZbasics},
there is a bijection of sets
\[
\mathrm{RZ}(\breve{\F}_p) \iso X_\mu(b) 
\]
identifying
\[
\mathrm{RZ}_\Lambda(\breve{\F}_p) \iso \{ g\in X_\mu(b) :   p \mathbb{D} \subset  g \mathbb{D} \subset p^{-1} \mathbb{D} \}.
\]
\end{proposition}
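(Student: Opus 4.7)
The plan is to reinterpret the moduli problem defining $\mathrm{RZ}(\breve{\F}_p)$ in terms of Dieudonn\'e lattices in a fixed isocrystal, and then to match this data with cosets in $G(\breve{\Q}_p)/G(\breve{\Z}_p)$ cut out by the Frobenius condition defining $X_\mu(b)$. This extends the lattice description of Corollary \ref{cor:half hyperspecial} by keeping track of the signature condition and the polarization normalization directly.

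I begin by identifying isocrystals. Fixing an $\co_{\breve E}$-linear isomorphism $D(\overline{\mathbb Y}) \iso \co_{\breve E}$ as in \eqref{contraction} and using the $\co_E$-basis $x_1, \ldots, x_n$ of $\Lambda$, one obtains
\[
D(\mathbb X) = D(\Lambda \otimes \overline{\mathbb Y}) \iso \breve\Lambda = \mathbb D \subset \breve W.
\]
The signature $(0,1)$ condition on $\overline{\mathbb Y}$ forces $V D(\overline{\mathbb Y})_1 = D(\overline{\mathbb Y})_0$ and $V D(\overline{\mathbb Y})_0 = p D(\overline{\mathbb Y})_1$, which, unwound through \eqref{contraction diagrams} and the fact that $\sigma : \breve W \to \breve W$ interchanges $y_i$ and $z_i$, yields $F(y_i) = p z_i$ and $F(z_i) = y_i$. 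Comparing against the explicit parametrization of $T(\breve{\Q}_p)$, this operator is exactly $b\sigma$ for $b = \epsilon_0(p)$ (and $(b\sigma)^2 = p$ recovers the slope $1/2$).

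For a point $(X, \lambda_X, \varrho_X) \in \mathrm{RZ}(\breve{\F}_p)$, I attach the lattice $L(X) = D(X) \subset \breve W$ through $\varrho_X$. The principal polarization, matching $\lambda_\mathbb X$ up to $\Q_p^\times$-scaling, makes $L(X)$ a $\co_{\breve E}$-stable $\breve{\Z}_p$-lattice that is self-dual up to a $\Q_p^\times$-scalar under the hermitian form; such lattices are in bijection with $G(\breve{\Q}_p)/G(\breve{\Z}_p)$ via $L(X) = g\mathbb D$. The main verification is that the signature $(2, n-2)$ condition on $X$ is equivalent to $g^{-1} b g^\sigma \in G(\breve{\Z}_p) \mu(p) G(\breve{\Z}_p)$. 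Unwinding $\Lie(X)_\epsilon = D(X)_\epsilon / V D(X)_{\bar\epsilon}$ gives
\[
V D(X)_1 \stackrel{2}{\subset} D(X)_0, \qquad V D(X)_0 \stackrel{n-2}{\subset} D(X)_1,
\]
and a direct calculation of $\mu(p) = \epsilon_0(p)\epsilon_1(p)\epsilon_2(p)$ on the basis $y_1, \ldots, y_n, z_1, \ldots, z_n$ yields the diagonal $(p, p, 1, \ldots, 1;\, p, \ldots, p, 1, 1)$, so that $\mathbb D_0 / \mu(p) \mathbb D_0$ and $\mathbb D_1 / \mu(p) \mathbb D_1$ have lengths $2$ and $n - 2$ respectively. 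Via $F = p V^{-1}$ and $F(g\mathbb D) = b g^\sigma \mathbb D$, this exactly matches the Cartan invariant of $g^{-1}bg^\sigma$ being $\mu$, i.e., $g \in X_\mu(b)$. The converse construction from $g \in X_\mu(b)$ to a point of $\mathrm{RZ}$ is provided by the classical Dieudonn\'e equivalence.

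For the second bijection, I track the two extra conditions of Definition \ref{def:partialRZ}. The isogeny conditions on $j$ and $j^\vee$ in \eqref{more quasi} translate directly to $p\mathbb D \subset D(X) \subset p^{-1}\mathbb D$, i.e., $p\mathbb D \subset g\mathbb D \subset p^{-1}\mathbb D$; the exact polarization-preservation condition says $g\mathbb D$ is self-dual (not just up to scalar), equivalently $\nu(g) \in \Z_p^\times$. The main obstacle I foresee is the careful matching between the hermitian pairing on $\breve W$ (bilinear with conjugate-linearity through the $e_0/e_1$ splitting) and the polarization on $D(X)$ under the conjugate convention \eqref{conjugate convention} on $X^\vee$, so that ``self-dual up to scalar'' precisely corresponds to membership in $G = \mathrm{GU}(\Lambda)$ and so that the similitude-factor normalization is compatible with the lattice chain through $\nu(b) = p = \nu(\mu(p))$. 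Once this bookkeeping is in place, the bijection with $\mathrm{RZ}_\Lambda(\breve{\F}_p)$ falls out by comparison with Proposition \ref{prop:hyperspecial points} and Corollary \ref{cor:half hyperspecial}.
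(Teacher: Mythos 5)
Your proposal follows essentially the same route as the paper's proof: realize the isocrystal as $\breve W$ with $F = b\sigma$ (giving $Fy_i = pz_i$, $Fz_i = y_i$), observe that $\mathbb D$ is the Dieudonn\'e module of the signature $(0,n)$ framing object $\Lambda \otimes \overline{\mathbb Y}$, attach to $(X,\lambda_X,\varrho_X)$ the lattice $g\mathbb D = D(X)$ inside $\breve W$, and translate the signature $(2,n-2)$ condition into the Cartan condition defining $X_\mu(b)$ using the explicit diagonal shape of $\mu(p)$. The paper constructs the map starting from $g \in X_\mu(b)$ and producing the triple, while you go from the triple to $g$; you also spell out the calculation of $\mu(p)$ on the basis, whereas the paper merely asserts ``$\mu$ was chosen to ensure signature $(2,n-2)$.'' Both treatments leave the residual polarization bookkeeping to Proposition~1.10 of \cite{Vollaard}, which is precisely what you flag as the ``main obstacle.'' So the content is the same.

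One point in your treatment of the second bijection deserves more attention than you give it. You correctly record that the exact-polarization condition of Definition~\ref{def:partialRZ} amounts to $g\mathbb D$ being self-dual, equivalently $\ord_p\nu(g)=0$; but this condition does not appear in the statement of the proposition, which lists only the chain $p\mathbb D \subset g\mathbb D \subset p^{-1}\mathbb D$. You should therefore say whether the self-duality is automatic. For $n$ odd it is: the Cartan condition $g^{-1}bg^\sigma \in G(\breve{\Z}_p)\mu(p)G(\breve{\Z}_p)$ forces $\ord_p\det(g|_{\breve W_0}) = (n\ord_p\nu(g)-2)/2$, so $n\cdot\ord_p\nu(g)$ is even, hence $\ord_p\nu(g)$ is even, and the lattice inclusions bound $|\ord_p\nu(g)|<2$, giving $\ord_p\nu(g)=0$. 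For $n$ even this parity argument fails, and in fact for $n=2$ the coset $g = b = \epsilon_0(p)$ itself lies in $X_\mu(b)$ (since $b^{-1}bb^\sigma = b^\sigma = \mu(p)$) with $p\mathbb D \subset b\mathbb D \subset p^{-1}\mathbb D$ but $\ord_p\nu(b)=1$, so $b\mathbb D$ is not self-dual. Thus for $n$ even the self-duality of $g\mathbb D$ must be retained as an explicit part of the characterization, exactly as you track it; simply asserting that the bijection ``falls out by comparison with Proposition~\ref{prop:hyperspecial points}'' glosses over a genuine mismatch between your (correct) conditions and the set as written in the statement.
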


\begin{proof}
First note that $F=b\circ \sigma$  defines an isocrystal structure on $\breve{W}$ with  
\[
F y_i = p z_i \quad \mbox{and} \quad F z_i = y_i.
\]
  The lattice $\mathbb{D}$ is stable under $F$ and $V$, and it is easy to see from 
\eqref{dieudonne-lie} that it is the Dieudonn\'e module $\mathbb{D}=D(\mathbb{X})$ of a $p$-divisible group $\mathbb{X}$ with an $\co_E$-action of  signature $(0,n)$.  
Fixing a $u \in \co_E^\times$ with $\overline{u} = -u$, the alternating form 
\[
\lambda_W(x,y)\define \mathrm{Tr}_{E/\Q_p} h(ux,y)
\]
 on $W$ extends $\breve{\Q}_p$-bilinearly to a polarization of the isocrystal $\breve{W}$.
 The lattice $\mathbb{D} \subset \breve{W}$ is self-dual under this alternating form, which 
determines an $\co_E$-linear  principal polarization of $\mathbb{X}$.

Under the isomorphism $\mathbb{D} \iso \Lambda \otimes_{\co_E} \co_{\breve{E}}$ of  \eqref{ADLlattices}, the 
 orthogonal idempotents $e_0,e_1 \in \co_{\breve{E}}$ from \eqref{orthopotents} satisfy
 \[
 y_i = x_i \otimes e_0 \quad \mbox{and} \quad z_i = x_i \otimes e_1.
 \]
The operator $F$ on the left hand side therefore has the form $F=\mathrm{id} \otimes F'$ for a  unique $\sigma$-semi-linear operator $F'$ on $\co_{\breve{E}}$, namely $F' e_0 = p e_1$ and $F' e_1=e_0$.
This operator makes $ \co_{\breve{E}}$ into a Dieudonn\'e module isomorphic to $D(\overline{\mathbb{Y}})$, and a choice of such an isomorphism identifies
\[
\mathbb{X} = \Lambda \otimes \overline{\mathbb{Y}} .
\]

The rest  is routine. 
For any $g\in X_\mu(b)$ the $\co_{\breve{E}}$-lattice $g\mathbb{D}=D(X)$ is  the Dieudonn\'e module of a $p$-divisible group $X$ with an $\co_E$-action, and the cocharacter  $\mu$ was chosen to ensure that $X$ has signature $(2,n-2)$. 
The inclusion 
\[
D(X)=g \mathbb{D}  \subset \mathbb{D} [1/p] = D(\mathbb{X}) [1/p]
\]
 corresponds to an $\co_E$-linear quasi-isogeny $\varrho_X: X \dashrightarrow \mathbb{X}$, and the pullback of the principal polarization on the target can be rescaled by a unique power of $p$ to obtain a principal polarization $\lambda_X$ of $X$.
The triple $(X,\lambda_X,\varrho_X)$ defines a point of $\mathrm{RZ}(\breve{\F}_p)$, and this is the desired bijection.
\end{proof}


\subsection{Labeling the   components}


We now invoke the parametrization of irreducible components of Rapoport-Zink spaces due to Xiao-Zhu \cite{XZ}, and its refinement in the case of $\mathrm{GU}(2,n-2)$ worked out in \cite{FI}.

Given cosets 
$
g_1,g_2\in G(\breve{\Q}_p) / G(\breve{\Z}_p),
$
  the Cartan decomposition implies the existence of a unique
dominant $\beta_{g_1,g_2} \in X_*(T)$ such that 
\[
g_2^{-1} g_1 \in G(\breve{\Z}_p) \beta_{g_1,g_2}(p) G(\breve{\Z}_p) .
\]
We call this cocharacter  the \emph{relative position invariant} of the  lattices $g_1\mathbb{D}$ and $g_2\mathbb{D}$, and denote it by
\[
\inv_G(g_1 \mathbb{D}, g_2\mathbb{D})  \define \beta_{g_1,g_2} \in X_*(T) .
\]
The subscript $G$ is included to distinguish this from the invariant  of Definition \ref{def:GLinvariant}.
In this terminology, our affine Deligne-Lusztig variety becomes
\[
X_\mu(b)  = \{ g \in G(\breve{\Q}_p) / G(\breve{\Z}_p) :  \inv_G( b g^\sigma \mathbb{D} , g \mathbb{D}) = \mu  \} .
\]

Define cocharacters
$
\alpha_1,\ldots, \alpha_{ \lfloor n/2\rfloor}  \in X_*(T)
$
 by (recall  $k^\vee =  {n+1-k}$)
\begin{equation}\label{alpha}
\alpha_k = \begin{cases}
 (\epsilon_1+ \cdots +\epsilon_{k-1}) - (\epsilon_{k^\vee}+ \cdots +\epsilon_{1^\vee}) & \mbox{if } k <n/2 \\
 \epsilon_0+\epsilon_1+\cdots+\epsilon_{ k-1} & \mbox{if } k= n/2.
\end{cases}
\end{equation}
(See Remark \ref{rem:bad alpha} for an explanation of why the case $k=n/2$ is treated differently.)
Using the bijection 
\[
X_\mu(b)   \iso \mathrm{RZ}(\breve{\F}_p)
\]
 of Proposition \ref{prop:DLtoRZ}, for any $\gamma \in G(\Q_p)$ and $1\le k \le \lfloor n/2\rfloor$, denote by 
 \[
 \mathrm{RZ}_{(k,\gamma)} \subset \mathrm{RZ}^\red
 \]
the locally closed subset  (endowed with its reduced scheme structure) whose $\breve{\F}_p$-points are
\begin{equation}\label{XZcomponents}
 \mathrm{RZ}_{(k,\gamma)}(\breve{\F}_p) = 
 \left\{ g \in X_\mu(b) :  \mathrm{inv}_G( g \mathbb{D} , \gamma \mathbb{D} ) = \alpha_k \right\} .
\end{equation}
This only depends on the coset $\gamma \in G(\Q_p)/G(\Z_p)$, and satisfies
\begin{equation}\label{translation compatible}
\mathrm{RZ}_{(k,\gamma)}= \gamma \cdot \mathrm{RZ}_{(k,\mathrm{id})} .
\end{equation}

\begin{remark}
The cocharacters $\alpha_1$ and (if $n$ is even) $\alpha_{n/2}$ are minuscule, but the other $\alpha_i$'s are not.  This is closely tied up with the fact that $\mathrm{RZ}_{(1,\gamma)}$ and 
$\mathrm{RZ}_{(n/2,\gamma)}$ are isomorphic to  Deligne-Lusztig varieties, as we soon see.
\end{remark}

\begin{theorem}\label{thm:XZexplicit}
Abbreviate $r=\lfloor n/2 \rfloor$.
There is a bijection 
\[
\{ 1,\ldots,  r \} \times G(\Q_p)/G(\Z_p) \iso
\{ \mbox{irreducible components of } \mathrm{RZ} \}
\]
 sending  the pair $(k,\gamma)$ to the closure of  $\mathrm{RZ}_{(k,\gamma)}$.
  \end{theorem}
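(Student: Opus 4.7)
The plan is to identify each locus $\mathrm{RZ}_{(k,\mathrm{id})}$ with a scheme already described in Sections \ref{s:partialRZ}--\ref{s:extremal supplement} and then propagate via the $G(\Q_p)$-action using \eqref{translation compatible}.

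First I would translate the condition $\mathrm{inv}_G(g\mathbb{D},\mathbb{D})=\alpha_k$ into  explicit conditions on the lattice $g\mathbb{D}\subset \breve{W}$ relative to $\mathbb{D}=\breve{\Lambda}$. Unwinding the Cartan decomposition and passing to the $0$-component via the decomposition \eqref{general decomp} converts this into the relative-position invariant of Definition \ref{def:GLinvariant} applied to the pair $((g\mathbb{D})_0,\breve{\Lambda}_0)$. For $1\le k<n/2$,  the explicit form of $\alpha_k$ in \eqref{alpha} matches precisely the invariant $\lambda_k$ of Proposition \ref{prop:strata points}, so that under the bijection of Proposition \ref{prop:DLtoRZ} we obtain
\[
\mathrm{RZ}_{(k,\mathrm{id})} = \mathrm{RZ}_\Lambda^{k,\red}.
\]
For $k=n/2$ (with $n$ even) the cocharacter $\alpha_{n/2}=\epsilon_0+\epsilon_1+\cdots+\epsilon_{n/2-1}$ is no longer supported inside the chain $p\mathbb{D}\subset\cdot\subset p^{-1}\mathbb{D}$ defining $\mathrm{RZ}_\Lambda$; a direct calculation of $\alpha_{n/2}(p)\mathbb{D}$ together with Proposition \ref{prop:heartpoints} instead identifies  $\mathrm{RZ}_{(n/2,\mathrm{id})}$ with a specific $G(\Q_p)$-translate of some $\mathrm{RZ}_{\Lambda'}^{\heartsuit,\red}$ attached to a canonical scalar-self-dual lattice $\Lambda'$ satisfying \eqref{scalar inclusions}.

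Combining these identifications with Theorems \ref{thm:main components} and \ref{thm:aux components}, each $\overline{\mathrm{RZ}_{(k,\gamma)}}$ is an irreducible closed subscheme of $\mathrm{RZ}^\red$ of dimension $n-2$. Since $\mathrm{RZ}^\red$ is equidimensional of dimension $n-2$ (as noted following Theorem \ref{bigthmB}), each such closure is an irreducible component. Surjectivity of the proposed map follows from the cover
\[
\mathrm{RZ}^\red=\bigcup_{\gamma\in G(\Q_p)/G(\Z_p)}\gamma\cdot \mathrm{RZ}_\Lambda^\red
\]
of Corollary \ref{cor:component cover} combined with  $\mathrm{RZ}_\Lambda^\red=\bigsqcup_k \mathrm{RZ}_\Lambda^{k,\red}$: every irreducible component meets some translate $\gamma\cdot \mathrm{RZ}_\Lambda^\red$ in a dense open set, hence coincides with the closure of some $\gamma\cdot \mathrm{RZ}_\Lambda^{k,\red}=\mathrm{RZ}_{(k,\gamma')}$, and in the $k=n/2$ case the exhaustion of Theorem \ref{thm:aux components} ensures coverage.

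For injectivity, the integer $k$ is recovered from a component as the common value of $\mathrm{inv}_G(g\mathbb{D},\mathbb{D})$ on the dense locus $\mathrm{RZ}_{(k,\gamma)}$ of its closure. For fixed $k$, distinct cosets in $G(\Q_p)/G(\Z_p)$ produce distinct components precisely when the stabilizer of $\mathrm{RZ}_{(k,\mathrm{id})}$ in $G(\Q_p)$ equals $G(\Z_p)$; for $k<n/2$ this is part of Theorem \ref{thm:main components}, while for $k=n/2$ it follows from Theorem \ref{thm:aux components} applied to the translate of $\mathrm{RZ}_{\Lambda'}^{\heartsuit,\red}$ identified above, since the translating element  normalizes $G(\Z_p)$ by construction.

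The main obstacle is the explicit matching of the group-theoretic cocharacters $\alpha_k$ with the lattice invariants $\lambda_k$, together with the exceptional bookkeeping required when $k=n/2$: one must recognize that $\alpha_{n/2}$  implicitly frames the moduli problem over the lattice $\Lambda'$ rather than $\Lambda$, and make the conjugation relating the two  explicit in order to reconcile the stabilizer computation of Theorem \ref{thm:aux components} with the uniform parametrization by $G(\Q_p)/G(\Z_p)$.
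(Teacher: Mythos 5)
Your proposal is circular: the ingredients you invoke are established in the paper \emph{downstream} of Theorem \ref{thm:XZexplicit}, not upstream of it. Concretely, you appeal to Corollary \ref{cor:component cover} to obtain the covering
\[
\mathrm{RZ}^\red=\bigcup_{\gamma\in G(\Q_p)/G(\Z_p)}\gamma\cdot \mathrm{RZ}_\Lambda^\red,
\]
but the paper's proof of that corollary begins precisely by citing Theorem \ref{thm:XZexplicit} to show each irreducible component lies inside some translate $\gamma\cdot\mathrm{RZ}_\Lambda^\red$. Similarly, you attribute stabilizer statements to Theorems \ref{thm:main components} and \ref{thm:aux components}, but as stated in the body of the paper neither theorem makes a stabilizer claim; the stabilizer assertion (``with stabilizer $G(\Z_p)$'') appears only in Theorem \ref{bigthmA} in the introduction, which aggregates later results and in particular relies on Theorem \ref{thm:XZexplicit} to justify it. So both your surjectivity and your injectivity arguments presuppose the parametrization you are trying to prove.

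The essential missing input is the parametrization theorem of Xiao--Zhu (Theorem 4.4.14 of \cite{XZ}), which the paper invokes directly: it gives an abstract bijection between irreducible components of $X_\mu(b)$ and a disjoint union indexed by weight spaces $\mathbb{MV}_\mu(\lambda)$ and by $J_b(\Q_p)/(J_b(\Q_p)\cap G(\breve{\Z}_p))=G(\Q_p)/G(\Z_p)$, and the real work in the paper's proof is to compute the relevant weight spaces (using that $\mu$ is minuscule, so $\dim V_\mu(\lambda)\in\{0,1\}$) and then to unwind the cocharacters $\nu_i$, $\tau_i$, $\delta_i$ from Lemma 4.4.3 of \cite{XZ} so as to identify each abstract component with the closure of the concrete locus \eqref{XZcomponents}. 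Your proposal skips this input entirely. The only way to salvage your route would be to \emph{independently} prove the covering \eqref{translate cover} (the paper notes in Remark \ref{rem:domain} that this can be done by a direct, if technical, linear-algebraic argument à la Vollaard, but deliberately omits it) and then argue dimensions and stabilizers from scratch; as written, though, you have not done this, and the argument as presented does not close.

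A secondary point: your matching of $\alpha_k$ with the lattice invariant $\lambda_k$ for $k<n/2$ is indeed carried out in the paper (it is the content of the first half of the proof of Theorem \ref{thm:main components}, which comes \emph{after} Theorem \ref{thm:XZexplicit}), so that piece of your reasoning is sound, but it again belongs to the wrong layer of the logical dependency graph.
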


\begin{proof}
This is the parametrization of irreducible components from \cite{XZ}, made explicit in \cite{FI} in the special case of $\mathrm{GU}(2,n-2)$.  For the reader's benefit, we provide a very rough sketch.

The triple $T\subset B\subset G$ has  Langlands dual 
\[
\widehat{T} \subset \widehat{B} \subset \widehat{G} \iso \GL_{2n} \times \mathbb{G}_m,
\]
where $\widehat{B}$ is the subgroup of matrices upper triangular in the first factor, and $\widehat{T}$ is the subgroup of matrices diagonal in the first factor.
Let $V_\mu$ be the representation of $\widehat{G}$ of heighest weight 
\[
\mu  \in X_*(T)=X^*(\widehat{T}). 
\]
Every $\lambda \in X_*(T)=X^*(\widehat{T})$ determines a weight space $V_\mu(\lambda)$, 
and because $\mu$ is minuscule the nonzero weight spaces lie in a single orbit under the action of the Weyl group.  In other words
\begin{equation}\label{MVsize}
\dim V_\mu(\lambda)
= \begin{cases}
1 & \mbox{if }  \lambda= \epsilon_0+\epsilon_i+\epsilon_j \mbox{ for some } i\neq j\\
0 & \mbox{otherwise.}
\end{cases}
\end{equation}

For any $\lambda \in X_*(T)$ denote by $[\lambda] \in X_*(T)/ (\sigma-1) X_*(T)$ its image under the quotient map.
By  Theorem 4.4.14 of \cite{XZ}, and recalling the equality $J_b=G$ of Remark \ref{rem:central b},
the irreducible components of $X_\mu(b)$ are in bijection with 
\begin{equation}\label{XZparameters}
\bigsqcup_{ \substack{ \lambda\in X_*(T) \\  [ \lambda] =[   \epsilon_0 ]  }}\mathbb{MV}_\mu(\lambda) \times 
G(\Q_p)/G(\Z_p) ,
\end{equation}
where $\mathbb{MV}_\mu(\lambda)$ is a finite set of cardinality  \eqref{MVsize}.

As $[ \epsilon_0+\epsilon_i+\epsilon_j]  = [\epsilon_0]$ if and only if $j=i^\vee$,
  the calculation \eqref{MVsize} shows that the  $\lambda$  contributing to \eqref{XZparameters} are precisely those of the form
  \[
\lambda _i \define \epsilon_0 + \epsilon_i + \epsilon_{i^\vee} \in X_*(T)
\]
with $1\le i \le r$.
  Thus  Theorem 4.4.14 of \cite{XZ} establishes  a bijection 
 \begin{equation}\label{XZsimple}
 \{ \lambda_1,\ldots, \lambda_r\}  \times G(\Q_p)/G(\Z_p) \iso \{\mbox{irreducible components of }\mathrm{RZ}^\red \},
\end{equation}
which we must make explicit.

 For every $1\le i \le r$,  Lemma 4.4.3 of \cite{XZ} associates a dominant cocharacter $\nu_i\in X_*(T)$ to the unique element of  the set $\mathbb{MV}_\mu(\lambda_i)$.
This  cocharacter   is not uniquely determined by this recipe, but once it is chosen one defines
\[
\tau_i = \lambda_i+ \nu_i -  \nu_i^\sigma \in X_*(T),
\]
and chooses  any $\delta_i \in X_*(T)$  satisfying $\tau_i = \epsilon_0 + \delta_i -  \delta_i^\sigma$.
In the case at hand we make the following choices.
For $1\le i < n/2$ set 
\begin{align*}
\nu_i  & =  (\epsilon_1+ \cdots +\epsilon_{i-1}) - (\epsilon_{i^\vee}+ \cdots +\epsilon_{1^\vee}) \\
  \tau_i & =\epsilon_0  \\
  \delta_i &= 0.
\end{align*}
When $n$ is even, so that $r=n/2$,  set
\begin{align*}
\nu_r  & =  \epsilon_1+ \cdots +\epsilon_{r-1} \\
  \tau_r & = \epsilon_0+\epsilon_1+\cdots+\epsilon_n \\
  \delta_r &=   \epsilon_1+ \cdots +\epsilon_r.
\end{align*}

Each of the minuscule cocharacters $\tau_i$ determines a  $b_i=\tau_i(p) \in G(\breve{\Q}_p)$, 
with its own affine Deligne-Lusztig variety
\[
X_\mu(b_i)  = \{ g \in G(\Q_p)/G(\Z_p) :  \inv_G( b_i g^\sigma \mathbb{D} , g \mathbb{D}) = \mu  \} .
\]
The locally closed subset
\[
\mathring{X}_{\mu,\nu_i}(b_i)  = \left\{ g\in X_\mu(b_i)  :  \mathrm{inv}_G( g \mathbb{D} , \mathbb{D} ) = \nu_i  \right\} 
\]
is  irreducible (see the proof of Lemma 7.2 of \cite{FI})  and its closure in $ X_\mu(b_i)$ is an irreducible component.

There is a bijection
\[
\Delta_i : X_\mu(b_i)  \to  X_\mu(b) 
\]
defined by $\Delta_i(g)  = \delta_i(p^{-1}) g$.  
Note  that $\delta_i(p^{-1}) \in G(\breve{\Q}_p)$ may not be central, and so this  bijection  need not respect the natural left actions of $G(\Q_p)$ on the source and target.
As $\delta_i-\delta_i^\sigma$ is a central cocharacter,  conjugation by $\delta_i(p^{-1})$ defines an automorphism of $G(\Q_p)$.
All hyperspecial subgroups of $G(\Q_p)$ are conjugate, so we may fix  a $k_i \in G(\Q_p)$ such that 
\begin{equation}\label{conj stabilizer}
\delta_i(p^{-1}) G(\Z_p) \delta_i(p) = k_i^{-1} G(\Z_p) k_i .
\end{equation}
The closure  of 
\begin{equation}\label{full component}
  k_i \Delta_i( \mathring{X}_{\mu,\nu_i}(b_i) )      =   \left\{ g\in X_\mu(b)  :  \mathrm{inv}_G( g \mathbb{D} ,  k_i  \delta_i(p^{-1}) \mathbb{D} ) = \nu_i  \right\} 
\end{equation}
 is an irreducible component of $\mathrm{RZ}^\red$, and is the image of $(i,\mathrm{id})$ under \eqref{XZsimple}.  Indeed, this is the definition of the bijection \eqref{XZsimple}.

If $1\le i <n/2$ then
\begin{align*}
 \eqref{full component}  
 &=    \left\{ g \in X_\mu(b) :  \mathrm{inv}_G( g \mathbb{D} , \mathbb{D} ) 
  = \nu_i \right\}  \\
  & =   \left\{ g \in X_\mu(b) :  \mathrm{inv}_G( g \mathbb{D} , \mathbb{D} ) = \alpha_i \right\}  \\
  &= \mathrm{RZ}_{(i,\mathrm{id})},
\end{align*}
as desired.
Now suppose $n$ is even, and  $i=n/2$.  The cocharacter $\epsilon_0+\delta_i$ is fixed by $\sigma$, and taking
$
k_i = (\epsilon_0+\delta_i)(p) \in G(\Q_p)
$
we find 
\begin{align*}
 \eqref{full component}  
 &=    \left\{ g\in X_\mu(b)  :  \mathrm{inv}_G( g \mathbb{D} ,  \epsilon_0 (p) \mathbb{D} ) = \nu_i  \right\}   \\
  & =  \left\{ g\in X_\mu(b)  :  \mathrm{inv}_G( g \mathbb{D} ,   \mathbb{D} ) = \alpha_i  \right\} \\
  &= \mathrm{RZ}_{(i,\mathrm{id})},
\end{align*}
as desired.
\end{proof}


\subsection{Main results}
\label{ss:main}


We can now put everything together to state and prove our main results on the structure of $\mathrm{RZ}^\red$.

This amounts to describing the structure of the locally closed subsets $\mathrm{RZ}_{(k,\gamma)}$ appearing in  Theorem \ref{thm:XZexplicit}, which we do by comparing them with the locally closed subsets (Definition \ref{def:RZk}) appearing in the decomposition
\begin{equation}\label{final strata}
 \mathrm{RZ}_\Lambda^\red = \bigsqcup_{k\ge 1} \mathrm{RZ}_\Lambda^{k,\red}
\end{equation}
determined by our choice of framing object $\Lambda \otimes \overline{\mathbb{Y}}$.

By Proposition \ref{prop:strata points}, the subschemes on the right hand side of \eqref{final strata} are nonempty only when $1\le k \le \lfloor n/2\rfloor$.
The cases $k<n/2$ and $k=n/2$ will require separate treatment, as did the definition   of the cocharacter $\alpha_k$ appearing in \eqref{XZcomponents}.

\begin{theorem}\label{thm:main components}
For  any $k<n/2$ we have
\[
\mathrm{RZ}_{(k,\mathrm{id})} = \mathrm{RZ}_\Lambda^{k,\red} .
\]
Moreover, 
for any $\gamma \in G(\Q_p)$ there is a smooth morphism 
\[
\mathrm{RZ}_{(k,\gamma)} \to \mathrm{DL}_\Lambda^k
\]
to the smooth and proper Deligne-Lusztig variety of Definition \ref{def:non-minuscule DL}.
Over this Deligne-Lusztig variety there is a rank $2k-1$ vector bundle $\mathscr{V}$   equipped with a   morphism
\[
\beta : \mathscr{V} \otimes \sigma^*\mathscr{V} \to \co_{\mathrm{DL}_\Lambda^k}
\]
 and a rank $k$ local direct summand $ \mathscr{V}^{(k)} \subset \mathscr{V}$  such that
 \[
\mathrm{RZ}_{(k,\gamma)}(S)  \iso
\left\{  
\begin{array}{c} 
 \mbox{rank $k-1$ local direct summands } 
 \mathscr{F} \subset \mathscr{V}_S   \\
 \mbox{satisfying }   \beta (\mathscr{F} \otimes \sigma^*\mathscr{F}) =0 
\mbox{ and } \mathscr{V}_S =  \mathscr{F} \oplus \mathscr{V}^{(k)}_S
  \end{array}  \right\} 
\]
 for any $\mathrm{DL}_\Lambda^k$-scheme $S$.
Recall that $\sigma^*\mathscr{V}$ means the Frobenius twist \eqref{coherent frob twist}.
\end{theorem}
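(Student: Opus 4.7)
The plan is to reduce everything to the case $\gamma=\mathrm{id}$ by $G(\Q_p)$-translation, identify $\mathrm{RZ}_{(k,\mathrm{id})}$ with $\mathrm{RZ}_\Lambda^{k,\red}$ by matching the cocharacter $\alpha_k$ with the lattice invariant $\lambda_k$, and then invoke the machinery assembled in Sections \ref{s:enhanced} and \ref{s:fibration}.

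First I would handle the reduction to $\gamma=\mathrm{id}$. By \eqref{translation compatible} we have $\mathrm{RZ}_{(k,\gamma)} = \gamma\cdot \mathrm{RZ}_{(k,\mathrm{id})}$, and $\gamma$ acts on $\mathrm{RZ}^\red$ by an $\breve{\F}_p$-scheme automorphism as in Remark \ref{rem:domain}. The target Deligne-Lusztig variety $\mathrm{DL}_\Lambda^k$ and the vector bundle $\mathscr{V}$ with its pairing $\beta$ are intrinsic to $\Lambda$, so once we construct the claimed morphism and moduli description for $\gamma=\mathrm{id}$, pulling back along multiplication by $\gamma^{-1}$ produces all the data for arbitrary $\gamma$ without change.

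Next I would establish the equality $\mathrm{RZ}_{(k,\mathrm{id})} = \mathrm{RZ}_\Lambda^{k,\red}$. Under the bijection of Proposition \ref{prop:DLtoRZ}, a point $g\in X_\mu(b)$ corresponds to $X\in\mathrm{RZ}(\breve{\F}_p)$ with $D(X)=g\mathbb{D}$ and $L_0 = (g\mathbb{D})_0\subset \breve{\Lambda}_0[1/p]$. Using the explicit basis $y_1,\ldots,y_n,z_1,\ldots,z_n$ and the fact that, for $1\le i\le n$, the cocharacter $\epsilon_i(p)$ acts by scaling $y_i$ by $p$ and $z_{i^\vee}$ by $p^{-1}$, a direct computation shows that $\alpha_k(p)$ acts on the $y$-summand with diagonal entries $(\,\underbrace{p,\ldots,p}_{k-1},1,\ldots,1,\underbrace{p^{-1},\ldots,p^{-1}}_{k}\,)$. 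Consequently the condition $\mathrm{inv}_G(g\mathbb{D},\mathbb{D}) = \alpha_k$ is equivalent to $\mathrm{inv}(L_0,\breve{\Lambda}_0) = \lambda_k$, which by Proposition \ref{prop:strata points} is precisely the condition defining $\mathrm{RZ}_\Lambda^{k,\red}(\breve{\F}_p)$. Since both subschemes are reduced locally closed subschemes of $\mathrm{RZ}^\red$ with the same $\breve{\F}_p$-points (and the action of $\alpha_k$ automatically produces a lattice between $p\mathbb{D}$ and $p^{-1}\mathbb{D}$, placing $g$ in $\mathrm{RZ}_\Lambda$), the identification follows.

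Finally, I would assemble the description from the results of Sections \ref{s:enhanced} and \ref{s:fibration}. Composing the chain of isomorphisms
\[
\mathrm{RZ}_\Lambda^{k,\red}\ \stackrel{\text{Prop.\ \ref{prop:pi_0 isomorphism}}}{\iso}\ \mathbf{RZ}_\Lambda^{k,\red}\ \stackrel{\text{Thm.\ \ref{thm:enhanced structure}}}{\iso}\ R^k_\Lambda
\]
gives a smooth morphism $\mathrm{RZ}_\Lambda^{k,\red}\to Y_\Lambda^{k,\red}\iso \mathrm{DL}_\Lambda^k$ of relative dimension $k-1$ (Proposition \ref{prop:Rstructure}, Theorem \ref{thm:YtoDL}), and the functor-of-points description of $R^k_\Lambda$ in terms of isotropic complementary summands $\mathscr{F}\subset \mathscr{V}$ with $\mathscr{V}=\mathscr{F}\oplus \mathscr{V}^{(k)}$ yields the claimed moduli interpretation; the pairing $\beta$ is the one constructed in Proposition \ref{prop:beta def}, and the rank and dimension claims follow from Proposition \ref{prop:DLstructure}. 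The main obstacle is the combinatorial matching in the middle paragraph, which requires one to carefully work out how $\alpha_k(p)$ acts on $\mathbb{D}$ in the chosen basis and to verify that the resulting lattice invariant on the $e_0$-component coincides with $\lambda_k$; once this identification is in place, everything else is a formal invocation of the earlier structural theorems.
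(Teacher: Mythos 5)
Your proposal follows the same route as the paper's proof: reduce to $\gamma=\mathrm{id}$ via \eqref{translation compatible}, compute the action of $\alpha_k(p)$ in the basis $y_1,\ldots,y_n,z_1,\ldots,z_n$ to match $\mathrm{inv}_G(g\mathbb{D},\mathbb{D})=\alpha_k$ with the lattice invariant $\lambda_k$ of Proposition \ref{prop:strata points}, and then cite Corollary \ref{cor:grassmannian} (equivalently, the chain through Proposition \ref{prop:pi_0 isomorphism}, Theorem \ref{thm:enhanced structure}, Proposition \ref{prop:Rstructure}, and Theorem \ref{thm:YtoDL}) for the vector-bundle moduli description. The only cosmetic difference is that the paper records the invariant on both the $e_0$- and $e_1$-summands rather than just on $L_0$; your version is fine given the self-duality constraint you note, and the argument is correct.
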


\begin{proof}
Recall the relative position invariant  of Definition \ref{def:GLinvariant}.
Under the bijection of Proposition \ref{prop:DLtoRZ}, an element $g\in X_\mu(b)$ corresponds to a $p$-divisible group $X\in  \mathrm{RZ}(\breve{\F}_p)$ satisfying
 \begin{align*}
\mathrm{inv}( g \mathbb{D}_0 ,  \mathbb{D}_0 )   
& = 
\mathrm{inv}(D(X)_0 , D(\Lambda \otimes \overline{\mathbb{Y}} )_0  )    \\
 \mathrm{inv}( g \mathbb{D}_1 ,  \mathbb{D}_1 )   
 & = 
 \mathrm{inv}(D(X)_1 , D(\Lambda \otimes \overline{\mathbb{Y}} )_1  )  ,
 \end{align*}
 which corresponds, under the bijection of Proposition \ref{prop:hyperspecial points},  to a lattice $L \subset \breve{\Lambda}[1/p]$ satisfying
 \begin{align*}
\mathrm{inv}(D(X)_0 , D(\Lambda \otimes \overline{\mathbb{Y}} )_0  )   
& = 
\mathrm{inv}(L_0  , \breve{\Lambda}_0   )    \\
 \mathrm{inv}(D(X)_1 , D(\Lambda \otimes \overline{\mathbb{Y}} )_1  ) 
 & =    
 \mathrm{inv}(L_1  , \breve{\Lambda}_1 ).
 \end{align*}

Directly from the definition \eqref{XZcomponents}, the element $g$ above lies in the subset $\mathrm{RZ}_{(k,\mathrm{id})}$ if and only if 
 \begin{align*}
  \mathrm{inv}( g \mathbb{D}_0 ,  \mathbb{D}_0 )  
  & =   ( \overbrace{1,\ldots, 1}^{k-1\mathrm{\ times}}, 0,\ldots, 0 , \overbrace{-1,\ldots, -1}^{k\mathrm{\ times}} )  \\
   \mathrm{inv}( g \mathbb{D}_1 ,  \mathbb{D}_1 )  
   & =    ( \underbrace{1,\ldots, 1}_{k\mathrm{\ times}}, 0,\ldots, 0 , \underbrace{-1,\ldots, -1}_{k-1\mathrm{\ times}} ).
 \end{align*}
By Proposition \ref{prop:strata points} and  the previous paragraph, these conditions are equivalent to $X\in \mathrm{RZ}_\Lambda^k(\breve{\F}_p)$. 
In other words, $\mathrm{RZ}_{(k,\mathrm{id})} = \mathrm{RZ}_\Lambda^{k,\red}$.

Given this last equality and \eqref{translation compatible}, the rest of the theorem is  a restatement of Corollary \ref{cor:grassmannian}.
\end{proof}

\begin{remark}
We remind the reader that the  vector bundle $\mathscr{V}$ on 
\[
 Y_\Lambda^{k,\red} \iso  \mathrm{DL}_\Lambda^k
 \]
   was constructed in Section \ref{ss:special bundle},
which contains more information about its filtration and the morphism  $\beta$.  See especially Proposition \ref{prop:beta isotropy}.
\end{remark}

\begin{remark}
Theorem \ref{thm:main components} implies that
\[
\mathrm{RZ}_{(1,\gamma)} \iso  \mathrm{RZ}_\Lambda^{1,\red}  \iso  \mathrm{DL}_\Lambda^1
\]
is itself a Deligne-Lusztig variety.
\end{remark}

Now we turn to the case $k=n/2$, so suppose $n$ is even.
In this case we have  inclusions (Proposition \ref{prop:heart decomp})
\begin{equation}\label{supplement inclusion}
\mathrm{RZ}_\Lambda^{n/2,\red} \subset 
 \bigcup_{ p\Lambda  \subsetneq \Lambda'  \subsetneq \Lambda}  \mathrm{RZ}_{\Lambda'}^{\heartsuit ,\red}
 \subset \mathrm{RZ}^\red
\end{equation}
 in which the union is over all scalar-self-dual (Definition \ref{def:scalar dual})   $\co_E$-lattices  $\Lambda' $ lying between $p\Lambda$ and $\Lambda$, and each 
\[
 \mathrm{RZ}_{\Lambda'}^{\heartsuit ,\red} \subset \mathrm{RZ}_\Lambda^\red
 \]
  is the  closed subscheme of Definition \ref{def:heart}.  
  The following implies that every such $\mathrm{RZ}_{\Lambda'}^{\heartsuit ,\red}$ is an irreducible component of $\mathrm{RZ}^\red$.

\begin{theorem}\label{thm:aux components}
Assume that $k=n/2$, and fix a $\Lambda'$ as in  \eqref{supplement inclusion}
\begin{enumerate}
 \item
There is an $h \in G(\Q_p)$ such that 
$\mathrm{RZ}_{( k ,h)} =  \mathrm{RZ}_{\Lambda'}^{\heartsuit ,\red} $.
\item
For any $\gamma \in G(\Q_p)$ there is an isomorphism
 \[
\mathrm{RZ}_{(k,\gamma)} \iso \mathrm{DL}_{\Lambda'}^\heartsuit
\]
where the smooth and proper Deligne-Lusztig variety on the right is that  of Definition \ref{def:heartDL}.  
\end{enumerate}
\end{theorem}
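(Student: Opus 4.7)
For part (1) I will select $h\in G(\Q_p)$ satisfying $h\Lambda = p^{-1}\Lambda'$, and then show directly that the lattice-theoretic conditions defining $\mathrm{RZ}_{(n/2,h)}(\breve{\F}_p)$ and $\mathrm{RZ}_{\Lambda'}^\heartsuit(\breve{\F}_p)$ coincide.  Part (2) will then follow formally from $G(\Q_p)$-equivariance and Theorem \ref{thm:heart comparison}.

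To produce $h$: the lattice $p^{-1}\Lambda'$ is itself scalar-self-dual, with scalar whose $p$-adic valuation exceeds that of $\Lambda$ (namely $0$) by exactly $1$.  Since the similitude factor of an element of $G(\Q_p)$ rescales the scalar of a scalar-self-dual lattice by its inverse, choosing any $g\in G(\Q_p)$ whose similitude factor has $p$-adic valuation $-1$ places $g\Lambda$ and $p^{-1}\Lambda'$ in a common $U(W)(\Q_p)$-orbit; standard transitivity for scalar-self-dual hermitian lattices over an unramified local field then produces the desired $h$.

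Next I translate the condition $\inv_G(g\mathbb{D}, h\mathbb{D})=\alpha_{n/2}$ into a condition on $L_0 := g\mathbb{D}_0 \subset \breve{\Lambda}_0[1/p]$.  Under Proposition \ref{prop:DLtoRZ} we have $h\mathbb{D}_0 = p^{-1}\breve{\Lambda}'_0$.  Writing $\alpha_{n/2}(p) = \epsilon_0(p)\epsilon_1(p)\cdots\epsilon_{n/2-1}(p)$ as a diagonal matrix in the basis $y_1,\ldots,y_n$, and noting that $\epsilon_0$ acts trivially on $\breve{\Lambda}_0$, one reads off
\[
\inv(L_0,\,p^{-1}\breve{\Lambda}'_0) \,=\, (\underbrace{1,\ldots,1}_{n/2-1\text{ times}},\underbrace{0,\ldots,0}_{n/2+1\text{ times}}),
\]
i.e.~$L_0 \subset p^{-1}\breve{\Lambda}'_0$ has colength $n/2-1$.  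Automatically $L_0 \supset p\cdot(p^{-1}\breve{\Lambda}'_0) = \breve{\Lambda}'_0$, and since $p\Lambda\subset\Lambda'\subset\Lambda$ this puts $g$ in $\mathrm{RZ}_\Lambda(\breve{\F}_p)$; combined with $g\in X_\mu(b)$, Corollary \ref{cor:half hyperspecial} forces $L_0^*\stackrel{2}{\subset} L_0$.  The scalar-self-duality of $\Lambda'$ yields $(p^{-1}\breve{\Lambda}'_0)^* = \breve{\Lambda}'_0$ under the pairing $b$, and dualizing the established inclusions produces the full chain
\[
\breve{\Lambda}'_0 \stackrel{n/2-1}{\subset} L_0^* \stackrel{2}{\subset} L_0 \stackrel{n/2-1}{\subset} p^{-1}\breve{\Lambda}'_0,
\]
which is precisely the characterization of $\mathrm{RZ}_{\Lambda'}^\heartsuit(\breve{\F}_p)$ in Proposition \ref{prop:heartpoints}.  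The reverse implication is immediate.  Since $\mathrm{RZ}_{(n/2,h)}$ and $\mathrm{RZ}_{\Lambda'}^{\heartsuit,\red}$ are reduced locally closed subschemes of the locally finite-type $\breve{\F}_p$-scheme $\mathrm{RZ}^\red$ whose closed points coincide, they are equal as subschemes, proving part (1).

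For part (2), equation \eqref{translation compatible} gives $\mathrm{RZ}_{(k,\gamma)} = (\gamma h^{-1})\cdot\mathrm{RZ}_{(k,h)}$, and $G(\Q_p)$ acts on $\mathrm{RZ}$ by scheme automorphisms, so $\mathrm{RZ}_{(k,\gamma)}\iso\mathrm{RZ}_{(k,h)} = \mathrm{RZ}_{\Lambda'}^{\heartsuit,\red}\iso\mathrm{DL}_{\Lambda'}^\heartsuit$ via Theorem \ref{thm:heart comparison}.  The main obstacle is the bookkeeping in the third paragraph: computing $\alpha_{n/2}(p)$ explicitly and checking that the contribution of the central cocharacter $\epsilon_0$ on both $\breve{\Lambda}_0$ and $\breve{\Lambda}_1$ is consistent with the self-dualities of $g\mathbb{D}$ and $h\mathbb{D}$, together with the auxiliary identity $(p^{-1}\breve{\Lambda}'_0)^* = \breve{\Lambda}'_0$, which must be extracted from the scalar-self-duality of $\Lambda'$ and the $\Phi$-twisting that enters the definition of the right dual under $b$.
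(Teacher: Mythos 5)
Your proof is correct and takes essentially the same route as the paper: choose $h$ with $h\Lambda = p^{-1}\Lambda'$, translate $\inv_G(g\mathbb{D},h\mathbb{D})=\alpha_{n/2}$ into a chain condition on $L_0$ matching Proposition \ref{prop:heartpoints}, and then transport via \eqref{translation compatible} and Theorem \ref{thm:heart comparison}. The only difference is cosmetic: the paper reads off both the $\mathbb{D}_0$ and $\mathbb{D}_1$ parts of $\alpha_{n/2}$ at once (which is what actually guarantees $p\mathbb{D}\subset g\mathbb{D}\subset p^{-1}\mathbb{D}$, hence $g\in\mathrm{RZ}_\Lambda(\breve{\F}_p)$), whereas you extract only the $\mathbb{D}_0$ part and deduce the rest; and the paper deduces the existence of $h$ from surjectivity of the similitude character plus transitivity on self-dual lattices, which is a slightly cleaner phrasing than your appeal to transitivity on scalar-self-dual lattices. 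Neither difference affects the substance.
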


\begin{proof}
Recall that a hermitian space over $E$ is determined up to isometry by its dimension and determinant, viewed as an element of $\Q_p^\times/ \mathrm{Nm}_{E/\Q_p} (E^\times)$.  
As $n=\dim(W)$ is even, its determinant (hence its isometry classes) is unchanged if we rescale the hermitian form by an element of $\Q_p^\times$.  In other words, the similitude character $G(\Q_p) \to \Q_p^\times$ is surjective.  Combining this with the observation that all self-dual lattices in $W$ are isometric, it follows that any two scalar self-dual lattices in $W$ lie in the same $G(\Q_p)$-orbit.

By the previous paragraph, we may now fix an $h \in G(\Q_p)$ satisfying 
\[
h \Lambda = p^{-1}\Lambda'.
\]
Directly from the definition \eqref{XZcomponents}, an element $g\in X_\mu(b)$  lies in the subset $\mathrm{RZ}_{(k,h)}(\breve{\F}_p)$ if and only if 
\[
ph  \mathbb{D}_0   \stackrel{k+1}{\subset}  g \mathbb{D}_0  \stackrel{k-1}{\subset} h \mathbb{D}_0
\quad \mbox{and} \quad
p h\mathbb{D}_1   \stackrel{k-1}{\subset}  g \mathbb{D}_1  \stackrel{k+1}{\subset}  h\mathbb{D}_1.
\]
By our choice of $h$,  this is equivalent to the corresponding p-divisible group $X \in \mathrm{RZ}(\breve{\F}_p)$ satisfying
\begin{align*}
  D( \Lambda' \otimes\overline{\mathbb{Y}})_0
  \stackrel{k+1}{\subset}  
  D(X)_0  
  \stackrel{k-1}{\subset} 
 D( p^{-1}\Lambda' \otimes\overline{\mathbb{Y}})_0  \\  
 D( \Lambda'  \otimes\overline{\mathbb{Y}})_1
  \stackrel{k-1}{\subset}  
  D(X)_1  
  \stackrel{k+1}{\subset} 
D(p^{-1}\Lambda' \otimes\overline{\mathbb{Y}})_1.
  \end{align*}
Such $p$-divisible groups lie in the subset $\mathrm{RZ}_\Lambda(\breve{\F}_p) \subset \mathrm{RZ}(\breve{\F}_p)$, by the inclusions 
\[
p^{-1}\Lambda \subset \Lambda' \subset p^{-1}\Lambda' \subset \Lambda, 
\]
and correspond under the bijection of Corollary \ref{cor:half hyperspecial}  to lattices $L_0 \subset \breve{\Lambda}_0[1/p]$
satisfying
\[
\breve{\Lambda}_0' \stackrel{k-1}{\subset} L_0^* \subset L_0 \stackrel{k-1}{\subset} p^{-1} \breve{\Lambda}_0' .
\]
This is exactly the characterization of $\mathrm{RZ}_{\Lambda'}^{\heartsuit,\red}(\breve{\F}_p)$ from Proposition \ref{prop:heartpoints}, proving that 
\[
\mathrm{RZ}_{(k,h)}(\breve{\F}_p) = \mathrm{RZ}_{\Lambda'}^{\heartsuit,\red}(\breve{\F}_p).
\]
This completes the proof of the first claim.

Given the first claim and \eqref{translation compatible}, the second claim follows immediately from Theorem \ref{thm:heart comparison}. 
\end{proof}

Recall that Theorem \ref{thm:XZexplicit} presents the irreducible components of $\mathrm{RZ}^\red$ as Zariski closures of certain locally closed subsets  $ \mathrm{RZ}_{(k,\gamma)}$.  A priori, these locally closed subsets could be rather small; indeed, deleting a proper closed subset from any of them would not change the statement of that theorem.
The following result says that they are large enough to cover the entire Rapoport-Zink space, without needing to take their Zariski closures.  It would be interesting to know if this phenomenon is particular to the $\mathrm{GU}(2,n-2)$ Rapoport-Zink space, or if it holds in the greater generality of \cite{XZ}.

\begin{corollary}\label{cor:component cover}
 The locally closed subschemes of \eqref{XZcomponents} satisfy
\[
\mathrm{RZ}^\red
 = \bigcup_{    \substack{   1 \le k \le \lfloor n/2\rfloor \\  \gamma  \in  G(\Q_p)/G(\Z_p) }  }  
 \mathrm{RZ}_{(k,\gamma)} .
\]
\end{corollary}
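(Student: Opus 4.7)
The plan is to combine Theorem \ref{thm:XZexplicit}, which presents $\mathrm{RZ}^\red$ as a union of closures $\overline{\mathrm{RZ}_{(k,\gamma)}}$, with the concrete descriptions of these locally closed subsets in Theorems \ref{thm:main components} and \ref{thm:aux components}, together with the decomposition \eqref{final strata} of $\mathrm{RZ}_\Lambda^\red$, to show that no closure operations are needed. The argument will simultaneously establish the covering \eqref{translate cover} promised in Remark \ref{rem:domain}.

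\emph{Step 1: Cover $\mathrm{RZ}_\Lambda^\red$ and all its translates.} First I would show
$\mathrm{RZ}_\Lambda^\red\subset \bigcup_{k,\gamma} \mathrm{RZ}_{(k,\gamma)}$, treating the terms of the decomposition $\mathrm{RZ}_\Lambda^\red=\bigsqcup_k\mathrm{RZ}_\Lambda^{k,\red}$ one at a time. For $1\le k<n/2$, Theorem \ref{thm:main components} gives the equality $\mathrm{RZ}_\Lambda^{k,\red}=\mathrm{RZ}_{(k,\mathrm{id})}$. When $n$ is even and $k=n/2$, Proposition \ref{prop:heart decomp} supplies the containment $\mathrm{RZ}_\Lambda^{n/2,\red}(\breve{\F}_p)\subset \bigcup_{\Lambda'}\mathrm{RZ}_{\Lambda'}^{\heartsuit,\red}(\breve{\F}_p)$, and Theorem \ref{thm:aux components}(1) identifies each $\mathrm{RZ}_{\Lambda'}^{\heartsuit,\red}$ with some $\mathrm{RZ}_{(n/2,h)}$. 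Applying \eqref{translation compatible}, translating by an arbitrary $\gamma\in G(\Q_p)$ then yields $\gamma\cdot\mathrm{RZ}_\Lambda^\red\subset\bigcup_{k,\gamma'}\mathrm{RZ}_{(k,\gamma')}$.

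\emph{Step 2: Each closure lies in a translate of $\mathrm{RZ}_\Lambda^\red$.} Next I would verify that for every $(k,\gamma)$ there is a $g\in G(\Q_p)$ such that $\overline{\mathrm{RZ}_{(k,\gamma)}}\subset g\cdot\mathrm{RZ}_\Lambda^\red$. For $k<n/2$ take $g=\gamma$: Theorem \ref{thm:main components} gives $\mathrm{RZ}_{(k,\gamma)}=\gamma\cdot\mathrm{RZ}_\Lambda^{k,\red}\subset\gamma\cdot\mathrm{RZ}_\Lambda^\red$, and $\gamma\cdot\mathrm{RZ}_\Lambda^\red$ is closed in $\mathrm{RZ}^\red$ because $\mathrm{RZ}_\Lambda$ is projective (Remark \ref{rem:projectivity}).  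For $k=n/2$, Theorem \ref{thm:aux components}(1) provides an $h\in G(\Q_p)$ with $\mathrm{RZ}_{(n/2,h)}=\mathrm{RZ}_{\Lambda'}^{\heartsuit,\red}\subset\mathrm{RZ}_\Lambda^\red$, so taking $g=\gamma h^{-1}$ gives $\mathrm{RZ}_{(n/2,\gamma)}=\gamma h^{-1}\cdot\mathrm{RZ}_{(n/2,h)}\subset g\cdot\mathrm{RZ}_\Lambda^\red$, and again the target is closed.

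\emph{Conclusion.} Combining Step 2 with Theorem \ref{thm:XZexplicit} and then Step 1 produces the chain
\[
\mathrm{RZ}^\red \;=\; \bigcup_{k,\gamma}\overline{\mathrm{RZ}_{(k,\gamma)}} \;\subset\; \bigcup_{\gamma\in G(\Q_p)}\gamma\cdot\mathrm{RZ}_\Lambda^\red \;\subset\; \bigcup_{k,\gamma'}\mathrm{RZ}_{(k,\gamma')} \;\subset\; \mathrm{RZ}^\red,
\]
so every inclusion is an equality, which proves both \eqref{translate cover} and the corollary.

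There is no single deep step left here—the heavy lifting has already been done in the previous sections. The one point where the argument is not entirely formal is the case $k=n/2$ (when $n$ is even), where the stratum $\mathrm{RZ}_\Lambda^{n/2,\red}$ is not itself some $\mathrm{RZ}_{(n/2,\mathrm{id})}$ but has to be spread across the finitely many heart subschemes $\mathrm{RZ}_{\Lambda'}^{\heartsuit,\red}$ indexed by scalar-self-dual lattices $\Lambda'$. This matching is exactly the content of Proposition \ref{prop:heart decomp} combined with Theorem \ref{thm:aux components}(1), and is the only nontrivial input beyond bookkeeping.
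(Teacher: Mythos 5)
Your proposal is correct and takes essentially the same approach as the paper's own proof, invoking the same ingredients: Theorem \ref{thm:XZexplicit}, Theorems \ref{thm:main components} and \ref{thm:aux components}, Proposition \ref{prop:heart decomp}, the decomposition \eqref{final strata}, the translation compatibility \eqref{translation compatible}, and the projectivity (hence closedness) of $\mathrm{RZ}_\Lambda^\red$. The only difference is organizational: you package the argument as a single explicit chain of inclusions, while the paper first deduces $\mathrm{RZ}^\red=\bigcup_\gamma\gamma\cdot\mathrm{RZ}_\Lambda^\red$ and then covers each translate pointwise by the $\mathrm{RZ}_{(k,\gamma')}$'s; the substance is the same.
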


\begin{proof}
Combining  \eqref{translation compatible} with Theorems \ref{thm:main components} and \ref{thm:aux components}, we find that every $\mathrm{RZ}_{(k,\gamma)}$  is contained in some  $G(\Q_p)$-translate of $\mathrm{RZ}_\Lambda^\red$. 
As $\mathrm{RZ}_\Lambda^\red \subset \mathrm{RZ}^\red$ is closed, it follows now from Theorem \ref{thm:XZexplicit}  that every
irreducible component of $\mathrm{RZ}^\red$  is also contained in such a translate.
This proves that 
\[
\mathrm{RZ}^\red = \bigcup_{\gamma \in G(\Q_p) } \gamma \cdot \mathrm{RZ}_\Lambda^\red.
\]
Combining this with \eqref{final strata}, we find that
\[
 \mathrm{RZ}^\mathrm{red} 
 =  \bigcup_{  \substack{ \gamma \in G(\Q_p) \\ 1 \le k \le \lfloor n/2 \rfloor } }
  \gamma\cdot \mathrm{RZ}_\Lambda^{k,\mathrm{red} }.
 \]
 
Now fix a point   $s\in \mathrm{RZ}^\red$.  The paragraph above shows that $s$ is contained in some 
$\gamma \cdot \mathrm{RZ}_\Lambda^{k,\mathrm{red} }$, and we consider two cases.
If $k<n/2$ then Theorem  \ref{thm:main components} implies
\[
s \in \gamma  \cdot  \mathrm{RZ}_{ (k,\mathrm{id})} 
  =  \mathrm{RZ}_{ (k,\gamma)} .
\]
 If $k=n/2$ then we  use \eqref{supplement inclusion} and Theorem \ref{thm:aux components} to see that 
 \[
 s \in  \gamma \cdot \mathrm{RZ}_{\Lambda'}^{\heartsuit , \red}  = \mathrm{RZ}_{(n/2, \gamma')}
 \]
 for some scalar-self-dual $\Lambda'$ and some $\gamma' \in G(\Q_p)$.
\end{proof}

\begin{remark}\label{rem:bad alpha}
Recall that the cocharacters $\alpha_k \in X_*(T)$ of \eqref{alpha} were defined differently in the cases $k<n/2$ and $k=n/2$. What would have happened if, when $n$ is even,  we had instead defined
\[
\alpha_{ n/2 }= (\epsilon_1+ \cdots +\epsilon_{(n/2) -1}) - (\epsilon_{( n/2 )^\vee}+ \cdots +\epsilon_{1^\vee})
\]
 in conformity with the case   $k<n/2$?
Theorem \ref{thm:main components} would still hold, proving that the locally closed subset
\[
\mathrm{RZ}_{( n/2 ,\gamma)}(\breve{\F}_p) = 
 \left\{ g \in X_\mu(b) :  \mathrm{inv}_G( g \mathbb{D} , \gamma \mathbb{D} ) = \alpha_{ n/2}  \right\} 
 \]
 satisfies
\[
\mathrm{RZ}_{( n/2 ,\mathrm{id})} = \mathrm{RZ}_\Lambda^{ n/2 ,\red} ,
\]
and (as  $k=n/2$ is allowed throughout  \S \ref{s:enhanced} and \S \ref{s:fibration}, and in particular in 
Corollary \ref{cor:grassmannian}), admits  a smooth morphism
\[
\mathrm{RZ}_\Lambda^{ n/2 ,\red} \to \mathrm{DL}_\Lambda^{n/2}
\]
identifying the domain as a moduli space of local direct summands of a vector bundle on the  codomain, word-for-word as in 
Theorem \ref{thm:main components}. 
The only new phenomenon we encounter is that neither the source nor the target of this morphism is irreducible
(compare with  \eqref{supplement inclusion} and  Proposition \ref{prop:DLstructure}) and hence neither is $\mathrm{RZ}_{( n/2 ,\mathrm{id})}$.  
Thus from our point of view,  the above naive definition of $\alpha_{n/2}$ is not the right one to make.
\end{remark}

\end{document}